\documentclass[12pt,reqno ]{amsart}

\usepackage{ulem}
\usepackage[makeroom]{cancel}
\usepackage{amsmath}
\usepackage{amsrefs}
\usepackage{amsthm}
\usepackage{amssymb}
\usepackage{amsfonts}
\usepackage{mathrsfs}
\usepackage{comment}
\usepackage{hyperref}
\usepackage{enumerate,color}

\numberwithin{equation}{section}

\newtheorem{theorem}{Theorem}[section]
\newtheorem{lemma}[theorem]{Lemma}
\newtheorem{corollary}[theorem]{Corollary}

\newtheorem{proposition}[theorem]{Proposition}
\newtheorem{conjecture}[theorem]{Conjecture}

\theoremstyle{definition}
\newtheorem*{remark}{Remark}

\def\XXint#1#2#3{{\setbox0=\hbox{$#1{#2#3}{\int}$}
\vcenter{\hbox{$#2#3$}}\kern-.5\wd0}}

\newcommand\numberthis{\addtocounter{equation}{1}\tag{\theequation}}

\renewcommand{\div}{\ensuremath{\text{div} }}

\newcommand{\C}{\ensuremath{\mathbb{C}}}
\newcommand{\Cn}{\ensuremath{\mathbb{C}^n}}

\newcommand{\R}{\ensuremath{\mathbb{R}}}
\newcommand{\Z}{\ensuremath{\mathbb{Z}}}
\newcommand{\Rd}{\ensuremath{\mathbb{R}^d}}
\newcommand{\Rn}{\ensuremath{\mathbb{R}^n}}
\newcommand{\D}{\ensuremath{\mathscr{D}}}

\newcommand{\MC}[1]{\ensuremath{\mathscr{#1}}}
\newcommand{\J}{\ensuremath{\mathscr{J}}}
\newcommand{\F}{\ensuremath{\mathscr{F}}}
\newcommand{\G}{\ensuremath{\mathscr{G}}}

\newcommand{\BMOW}{\ensuremath{{\text{BMO}}_W ^p}}

\newcommand{\Mn}{\ensuremath{\mathcal{M}_{n \times n }}(\mathbb{C})}
\newcommand{\Md}{\ensuremath{\mathcal{M}_{d \times d}}(\mathbb{C})}
\newcommand{\Mnd}{\ensuremath{\mathcal{M}_{n \times d}}(\mathbb{C})}
\newcommand{\MndR}{\ensuremath{\mathcal{M}_{n \times d}}(\mathbb{R})}
\newcommand{\MnR}{\ensuremath{\mathcal{M}_{n \times n }}(\mathbb{R})}
\newcommand{\MdR}{\ensuremath{\mathcal{M}_{d \times d}}(\mathbb{R})}

\newcommand{\V}[1]{\ensuremath{\vec{#1}}}
\newcommand{\inrd}{\ensuremath{\int_{\Rd}}}
\newcommand{\tr}{\ensuremath{\text{tr}}}
\newcommand{\ip}[2]{\ensuremath{\left\langle#1,#2\right\rangle}}
\newcommand{\iptr}[2]{\ensuremath{\left\langle#1,#2\right\rangle}_{\tr}}
\newcommand{\W}[1]{\ensuremath{\widetilde{#1}}}

\newcommand{\Atwo}[1]{\ensuremath[#1]_{\text{A}_2}}
\newcommand{\Apq}[1]{\ensuremath [#1]_{\text{A}_{p,q}}}
\newcommand{\Btwo}[1]{\ensuremath\|#1\|_{\text{B}_2}}
\newcommand{\pdisk}{\ensuremath{\partial \mathbb{D}}}
\newcommand{\disk}{\ensuremath{\mathbb{D}}}
\newcommand{\ind}{\ensuremath{\int_{\disk}}}

\newcommand{\A}{\ensuremath{{\mathcal A}}}
\newcommand{\B}{\ensuremath{{\mathcal B}}}
\newcommand{\al}{\alpha}
\newcommand{\ra}{\rightarrow}
\newcommand{\N}{\ensuremath{{\mathcal N}}}
\newcommand{\Ap}[1]{\ensuremath[#1]_{\text{A}_p}}
\newcommand{\Apprime}[1]{\ensuremath [#1]_{\text{A}_{p'}}}
\newcommand{\pr}[1]{\left(#1\right)}
\DeclareMathOperator{\Div}{Div}
\DeclareMathOperator{\dv}{div}
\allowdisplaybreaks

\title[]{Matrix weighted Poincar\'e inequalities and applications to degenerate elliptic systems}

\author{Joshua Isralowitz}
\address{Department of Mathematics and Statistics\\
SUNY Albany, 1400 Washington Ave., Albany, NY, 12222.}\email{jisralowitz@albany.edu}

\author{Kabe Moen}
\address{Department of Mathematics\\
University of Alabama, Box 870350, 345 Gordon Palmer Hall.}\email{kabe.moen@ua.edu}

\begin{document}

\subjclass[2010]{Primary 42B20}

\keywords{Matrix $A_p$ weights, Poincar\'e inequalities, elliptic PDE, fractional operators}

\thanks{The second is partially supported by the NSF under grant DMS 1201504}

\maketitle

\begin{abstract}
We prove Poincar\'{e} and Sobolev inequalities in matrix A${}_p$ weighted spaces.  We then use these Poincar\'{e} inequalities to prove existence and regularity results for degenerate systems of elliptic equations whose degeneracy is governed by a matrix A${}_p$ weight.  Such results parallel earlier results by Fabes, Kenig, and Serapioni for a single degenerate equation governed by a scalar A${}_p$ weight.  In addition, we prove Cacciopoli and reverse Meyers H\"older inequalities for weak solutions of the degenerate systems.  Moreover, we show that the Riesz potential and fractional maximal operators are bounded on matrix weighted $L^p$ spaces and go on to develop an entire matrix A${}_{p, q}$ theory. \end{abstract}

\section{Introduction}

The classic Poincar\'e inequality
$$\left(\,\frac{1}{|Q|}\int_Q |u(x)-u_Q|^q\,dx\right)^{1/q}\lesssim |Q|^{\frac1d}\left(\,\frac{1}{|Q|}\int_Q |\nabla u(x)|^p\,dx\right)^{1/p}$$
holds for all cubes $Q$ in $\R^d$ when $u$ is sufficiently smooth, $1\leq p<d$, and $q=\frac{dp}{d-p}$.  Such inequalities are vital to the regularity theory of weak solutions to elliptic PDEs.  Fabes, Kenig, and Serapioni \cite{FKS} studied the degenerate elliptic PDE \begin{equation}\label{elliptic} \text{div}\, (A(x)\nabla u(x))=\sum_{\al,\beta=1}^d \partial_\al(A_\al^\beta(x)\partial_\beta u(x))= - (\div \V{f})(x) \end{equation}
where $A$ is a positive definite matrix that satisfies
$$w(x)|\xi|^2\simeq \langle A(x)\xi,\xi\rangle, \quad \xi\in \R^d$$
for some $w\in A_2$ and $|\V{f}| \in L^2(\Omega, w^{-1})$ for some domain $\Omega \subseteq  \Rd$.  They proved (in conjunction with a result by Modica from \cite{M}) that weighted Poincar\'e inequalities of the form
\begin{multline*}\left(\,\frac{1}{w(Q)}\int_Q |u(x)-u_Q|^{p + \epsilon} w(x)\,dx\right)^{\frac{1}{{p + \epsilon}}}\\ \lesssim |Q|^{\frac1d}\left(\,\frac{1}{w(Q)}\int_Q |\nabla u(x)|^{p - \epsilon} w(x)\,dx\right)^{\frac{1}{{p - \epsilon}}}\end{multline*}
hold for some $\epsilon > 0$ when $w\in A_p$ and used these inequalities to prove that weak solutions to \eqref{elliptic} (under further assumptions on $\vec{f}$) are H\"older continuous.

In this paper we will more generally consider systems of degenerate elliptic equations of the form
\begin{equation}\label{linearsys}\sum_{j = 1}^n \sum_{\al,\beta = 1}^d  \partial_\al(A_{ij}^{\al\beta}(x) \partial_\beta u_j(x))= -(\div{F})_i (x), \qquad i=1,\ldots,n\end{equation}
for $n \in \mathbb{N}$ not necessarily equal to $d$, where $A_{ij}^{\al \beta} \in \C$ and
\begin{equation}\label{MIEllip1}\sum_{i, j = 1}^n \sum_{\al,\beta = 1}^d A_{ij}^{\al\beta}(x)  \eta_\beta^j \overline{\eta_{\al}^i} \gtrsim \|W(x)^{\frac12}\eta\|^2, \quad \eta \in \Mnd\end{equation} and

\begin{equation} \label{MIEllip2}  |\sum_{i, j = 1}^n \sum_{\al,\beta = 1}^d   A_{ij}^{\alpha \beta} (x) \nu_\beta ^j    \overline{\eta_{\alpha}^i}| \lesssim \|W^{\frac12}(x)\eta\| \|W^{\frac12}(x)\nu\|, \quad \eta, \nu \in \Mnd\ \  \end{equation}
for a matrix weight $W$ (i.e. an a.e. positive definite  $\Mn$  valued function with locally integrable entries) and $F \in L^2(\Omega, W^{-1})$ (which will be defined momentarily).  To the best of our knowledge, it seems that systems of elliptic equations whose degeneracies are governed by matrix weights have never been considered before.

Given a matrix weight $W$ and an exponent $p > 0$ we define $L^p (\Omega,W)$ to be the collection of all $\Cn$ valued functions $\vec{f}$ such that
\begin{equation*} \|\V{f}\|_{L^p (\Omega,W)}^p = \int_\Omega |W^{\frac1p}(x) \V{f}(x) |^p \, dx < \infty.
\end{equation*}
We will also sometimes let $L^p (\Omega,W)$ denote the space of all $\Mnd$ valued functions $F$ whose norm above is finite.  When $\Omega=\R^d$ we will write $L^p(W)$.

A natural solution space for weak solutions of \eqref{linearsys} is the matrix weighted Sobolev space ${\text H}^{1,p}(\Omega,W)$.  Define the norm by
$$\|\vec{f}\|_{\text{H}^{1,p}(\Omega,W)}=\left(\int_\Omega |W^\frac1p(x)\vec{f}(x)|^p\,dx\right)^{\frac1p}+\left(\int_\Omega \|W^{\frac1p}(x)D\vec{f}(x)\|^p\,dx\right)^{\frac1p}$$
where $\|\cdot\|$ is any  matrix norm.  The space H$^{1,p}(\Omega,W)$ is defined as the completion of $C^\infty (\Omega)$ with respect to the norm $\|\cdot\|_{\text{H}^{1,p}(\Omega,W)}$ (we refer the reader to section 5 for more on the space $\text{H}^{1,p}$).

A matrix weight $W$ belongs to A$_p$ if
\begin{equation*}[W]_{A_{p}}=\sup_{Q } \,  \frac{1}{|Q|} \int_Q \left( \frac{1}{|Q|}  \int_Q \|W^\frac{1}{p} (x) W^{-\frac{1}{p}}(y) \|^{p'} \, dy \right)^\frac{p}{p'} \, dx  < \infty. \end{equation*} When $p = 2$ {\color{red} we have} \begin{equation*}[W]_{A_{2}} \simeq \sup_{Q } \,  \tr  \, \left( \frac{1}{|Q|}  \int_Q W (x) \, dx \right) \left(\frac{1}{|Q|} \int_Q  W^{-1} (x) \, dx\right)  \end{equation*} which says that the matrix A${}_2$ condition is especially easy to verify.

Treil-Volberg \cite{TV} showed that the Hilbert transform, defined component-wise, is bounded on $L^2(W)$ if and only if the matrix weight $W$ belongs to A$_2$.  Nazarov-Treil and Volberg \cite{NT,V} when $d = 1$ and the first author \cite{I} when $d > 1$ proved dyadic upper and lower matrix weighted Littlewood-Paley $L^p$ bounds when $W$ is a matrix A${}_p$ weight. Furthermore,  Goldberg \cite{G} characterized the boundedness of singular integral operators and the Hardy-Littlewood maximal operator by the matrix A$_p$ condition.

We are now ready to state our main results.  We begin with Sobolev and Poincar\'e inequalities in the matrix weighted case. {\color{red} As is done implicitly in \cite{FKS},  we also restrict ourselves to the case $d \geq 2$ here and throughout the rest of this paper.}

\begin{theorem} \label{MainThmSob} If $1<p<\infty$ and $W$ is a matrix A${}_p$ weight, then there exists $\epsilon \approx \Ap{W} ^{- \max\{1, \frac{p'}{p}\}}$ and $C \approx \Ap{W} ^{\max\{ 1 + \frac{2p'}{p}, 2 + \frac{p'}{p}\}}$ where
\begin{multline*}
\left(\frac{1}{|Q|} \int_Q |W^\frac{1}{p} (x) \V{f}(x) |^{p+\epsilon } \, dx \right)^{\frac{1}{p+\epsilon}} \\ \leq
C |Q|^\frac{1}{d} \left(\frac{1}{|Q|} \int_Q \|W^\frac{1}{p} (x) D \V{f}(x)\|^{p-\epsilon } \, dx \right)^\frac{1}{p-\epsilon}  \end{multline*} for each cube $Q$ and $\vec{f}\in C_0^1(Q)$. \end{theorem}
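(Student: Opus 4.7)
The plan is to follow the Fabes--Kenig--Serapioni scheme: first establish a matrix-weighted Poincar\'e inequality at the endpoint exponent $p$, and then self-improve both sides by $\epsilon$ using a quantitative reverse H\"older / openness property of the matrix A${}_p$ class.

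The starting point is the classical integral representation
$$\vec{f}(x) = c_d \int_Q \frac{(y-x)\cdot D\vec{f}(y)}{|x-y|^d}\,dy, \qquad \vec{f}\in C_0^1(Q),$$
where $y-x$ is contracted against the rows of $D\vec{f}(y)\in\Mnd$. Inserting $W^{-\frac{1}{p}}(y)W^{\frac{1}{p}}(y)$ immediately to the right of $W^{\frac{1}{p}}(x)$ gives the pointwise bound
$$|W^{\frac{1}{p}}(x)\vec{f}(x)| \;\lesssim\; T_W g(x) \;:=\; \int_Q \frac{\|W^{\frac{1}{p}}(x)W^{-\frac{1}{p}}(y)\|}{|x-y|^{d-1}}\,g(y)\,dy,$$
with $g(y):=\|W^{\frac{1}{p}}(y) D\vec{f}(y)\|$. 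Thus the theorem reduces to showing that the \emph{scalar} operator $T_W$ maps $L^{p-\epsilon}\!\left(Q,\tfrac{dx}{|Q|}\right)$ into $L^{p+\epsilon}\!\left(Q,\tfrac{dx}{|Q|}\right)$ with norm $\lesssim |Q|^{1/d}$.

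At the endpoint exponent $p$, an $L^p \to L^p$ bound for $T_W$ with norm $\lesssim \Ap{W}^{1/p} |Q|^{1/d}$ follows from H\"older in $y$ (putting $g$ in $L^p$ and the kernel in $L^{p'}$, which is precisely the average appearing in the A${}_p$ definition), followed by a standard scalar Riesz potential estimate on $Q$. To upgrade this baseline $(p,p)$ bound to a $(p-\epsilon,\,p+\epsilon)$ bound, I would invoke a quantitative matrix Gehring-type statement: if $W\in$ A${}_p$ then $W\in$ A${}_{p-\epsilon}$ with controlled constant, for $\epsilon \approx \Ap{W}^{-\max\{1,p'/p\}}$, together with a matching reverse H\"older gain for the scalar weights $\|W^{\frac{1}{p}}(\cdot)\vec{e}\|^{p}$. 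These statements are proved by passing through reducing operators $V_Q$ satisfying $\|V_Q\vec{e}\|^{p}\approx \frac{1}{|Q|}\int_Q \|W^{\frac{1}{p}}(x)\vec{e}\|^{p}\,dx$, which transfer the matrix question to a uniform family of scalar A${}_p$ reverse H\"older estimates. Applying the baseline bound with $p-\epsilon$ on the right, and a reverse H\"older upgrade for $W^{\frac{1}{p}}$ on the left, produces the inequality; book-keeping the powers of $\Ap{W}^{1/p}$ contributed by the two H\"olders, the openness, and the two reverse H\"older gains yields the stated $C\approx \Ap{W}^{\max\{1+2p'/p,\,2+p'/p\}}$.

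The main obstacle is precisely the quantitative self-improvement: obtaining matrix-weighted openness and reverse H\"older with the sharp exponent $\max\{1,p'/p\}$ in $\Ap{W}^{-1}$. One cannot invoke Gehring's lemma directly, since $W$ is matrix-valued with no pointwise submultiplicativity and the scalar weights $\|W^{\frac{1}{p}}\vec{e}\|^{p}$ are A${}_p$ only uniformly in $\vec{e}$, so the reducing-operator reduction must be executed with explicit control of all constants and then propagated cleanly through the two H\"older applications. Once this is in place, the baseline $(p,p)$ Poincar\'e inequality and the final assembly are essentially routine.
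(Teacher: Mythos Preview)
Your plan has a genuine gap in both the baseline step and the self-improvement step.

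For the baseline $(p,p)$ bound you write: ``H\"older in $y$ (putting $g$ in $L^p$ and the kernel in $L^{p'}$, which is precisely the average appearing in the A${}_p$ definition), followed by a standard scalar Riesz potential estimate.'' But the kernel is $\|W^{1/p}(x)W^{-1/p}(y)\|\,|x-y|^{-(d-1)}$, and putting it in $L^{p'}_y$ produces
\[
\left(\int_Q \frac{\|W^{1/p}(x)W^{-1/p}(y)\|^{p'}}{|x-y|^{(d-1)p'}}\,dy\right)^{1/p'},
\]
which is \emph{not} the A${}_p$ average (there is still the singular factor), and after this H\"older step there is no remaining Riesz potential to which a ``standard scalar'' estimate could be applied. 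The $x$-dependence of $\|W^{1/p}(x)W^{-1/p}(y)\|$ does not separate; the A${}_p$ condition only controls it after an additional average in $x$ over the \emph{same} cube. This is exactly why the paper passes through reducing operators $V_I$: one writes $\|W^{1/p}(x)W^{-1/p}(y)\|\le \|W^{1/p}(x)V_I\|\,\|V_I^{-1}W^{-1/p}(y)\|$ on each dyadic cube $I$, and then controls the two factors separately via the stopping-time function $N_Q(x)=\sup_{I\ni x}\|W^{1/p}(x)V_I\|$ (Lemma~\ref{StopLem}) and the auxiliary fractional maximal function $M'_{W,1}$ (Lemma~\ref{MatrixFKSLem}). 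Without this splitting, a single H\"older application cannot close.

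The self-improvement step is also not what you describe. You propose to rerun the baseline at exponent $p-\epsilon$ via A${}_p$-openness and then apply a ``reverse H\"older upgrade for $W^{1/p}$ on the left.'' But the operator $T_W$ is built from $W^{1/p}$, not $W^{1/(p-\epsilon)}$, so rerunning at $p-\epsilon$ does not match; and on the left you would need a reverse H\"older inequality for $|W^{1/p}(x)\vec f(x)|^p$, which has no reason to hold (it depends on $\vec f$, not just on $W$). In the paper the gain $+\epsilon$ on the left comes from the $L^{p+\epsilon}$ integrability of $N_Q$ (Lemma~\ref{StopLem}), and the gain $-\epsilon$ on the right comes from the off-diagonal weak-type bounds for $M'_{W,1}$ below $p$ (Lemma~\ref{MatrixFKSLem}), glued together by a sparse stopping-time decomposition (Lemma~\ref{StoppingTimeLemma}) whose Carleson packing constant is what forces $a\approx\Ap{W}^{(1+p')/p}$ and ultimately produces the exponent $\max\{1+2p'/p,\,2+p'/p\}$. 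None of these ingredients is captured by ``A${}_p$-openness plus scalar reverse H\"older.''
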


\begin{theorem} \label{MainThmPoin} If $1<p<\infty$ and $W$ is a matrix A${}_p$ weight, then there exists $\epsilon \approx \Ap{W} ^{- \max\{1, \frac{p'}{p}\}}$ and $C \approx \Ap{W} ^{\max\{ 1 + \frac{2p'}{p}, 2 + \frac{p'}{p}\}}$ where
\begin{multline*}
\left(\frac{1}{|Q|} \int_Q |W^\frac{1}{p} (x) (\V{f}(x) - \V{f}_Q) \,  |^{p+\epsilon}  \, dx \right)^{\frac{1}{p+\epsilon}} \\ \leq
C |Q|^\frac{1}{d} \left(\frac{1}{|Q|} \int_Q \|W^\frac{1}{p} (x) D \V{f}(x)\|^{p-\epsilon } \, dx \right)^{\frac{1}{p-\epsilon}}  \end{multline*} for each cube $Q$ and $\V{f} \in C^1 (Q)$.\end{theorem}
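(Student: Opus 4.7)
The plan is to reduce Theorem \ref{MainThmPoin} to the machinery already used for Theorem \ref{MainThmSob} by establishing a Poincar\'e-type pointwise bound of the same structure as the Sobolev one. For $\V{f} \in C^1(Q)$, averaging the fundamental theorem of calculus identity $\V{f}(x) - \V{f}(y) = \int_0^1 D\V{f}(tx + (1-t)y)(x - y)\, dt$ in $y$ over $Q$ and applying $W^{1/p}(x)$ yields, after a standard change of variables and the elementary bound $|M \xi| \leq \|M\|\, |\xi|$,
\begin{equation*}
\bigl|W^{1/p}(x)(\V{f}(x) - \V{f}_Q)\bigr| \;\leq\; c_d \int_Q \frac{\|W^{1/p}(x)\, D\V{f}(y)\|}{|x - y|^{d-1}}\, dy, \qquad x \in Q.
\end{equation*}
This is precisely the Poincar\'e analogue of the pointwise bound $|W^{1/p}(x)\V{f}(x)| \leq c_d \int_Q \|W^{1/p}(x) D\V{f}(y)\|\,|x-y|^{1-d}\, dy$ used to prove Theorem \ref{MainThmSob} for $\V{f} \in C_0^1(Q)$.

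With this estimate in hand, the proof proceeds verbatim as in the Sobolev case. One takes the $L^{p+\epsilon}(Q)$ norm of the left-hand side, exchanges $W^{1/p}(x)$ with $W^{1/p}(y)$ on the right via a reducing operator $\MC{W}_Q$ (losing only a factor controlled by $\Ap{W}$), and then invokes the matrix-weighted truncated Riesz potential bound. Thanks to the open-ended reverse H\"older property of matrix $A_p$ weights, this produces the exponents $p \pm \epsilon$ with $\epsilon \approx \Ap{W}^{-\max\{1, p'/p\}}$ and the constant $C \approx \Ap{W}^{\max\{1 + 2p'/p,\, 2 + p'/p\}}$. Since the combinatorics in this step depend only on the matrix $A_p$ condition and the kernel $|x-y|^{1-d}$, they are insensitive to whether the input is $\V{f} \in C_0^1(Q)$ or $\V{f} - \V{f}_Q$ with $\V{f} \in C^1(Q)$, which explains why the two theorems share the same constants.

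The principal obstacle, inherited from Theorem \ref{MainThmSob}, is the decoupling of $W^{1/p}(x)$ from the integration variable $y$ on the right-hand side: a purely scalar weighted Riesz potential bound does not apply directly, and one must pass through the reducing operator formalism to replace $W^{1/p}$ on each dyadic subcube by a single constant matrix, then carry out a scalar-type fractional integral estimate. Since this technical core is already developed in the proof of Theorem \ref{MainThmSob}, its re-use here is automatic and Theorem \ref{MainThmPoin} follows.
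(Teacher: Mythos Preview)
Your proposal is correct and follows essentially the same route as the paper: derive the standard Poincar\'e representation
\[
|W^{1/p}(x)(\V{f}(x)-\V{f}_Q)| \lesssim \int_Q \frac{\|W^{1/p}(x)\,D\V{f}(y)\|}{|x-y|^{d-1}}\,dy,
\]
and then invoke the entire machinery (dyadic decomposition, reducing operators, stopping time from Lemma~\ref{StoppingTimeLemma}, Lemma~\ref{MatrixFKSLem}) already built for Theorem~\ref{MainThmSob}. The paper phrases the representation in its bilinear form paired with a test function $\V{g}$, but this is only a cosmetic difference since the first step in the Sobolev proof is to bound that pairing by exactly your norm expression.
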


As will be apparent from the proof, we can in fact replace the cube $Q$ in Theorems \ref{MainThmSob} and \ref{MainThmPoin} with an open ball $B$ so long as we have that $\vec{f} \in C_0 ^1(B)$ (respectively $\vec{f} \in C^1(B)$). Further, despite the unpleasant appearance of $\epsilon$ and $C$ in general, note that when $p = 2$ (which is most relevant for applications) we have $\epsilon \approx [W]_{\text{A}_2} ^{-1}$ and $C \approx [W]_{\text{A}_2} ^ 3$.  The upper dependence on $\epsilon$ is sharp even in the scalar case by considering the power weights of the form $w_s(x)=|x|^{n-s}$ as $s\ra 0$.  Note that the discrepancy between this bound and the one for fractional integral operators is  due to the more complicated stopping time argument needed to prove Theorems \ref{MainThmSob} and \ref{MainThmPoin} (see Lemma \ref{StoppingTimeLemma}.)   Moreover, it is very likely that the A${}_p$ dependence on $C$ above can be improved when $\epsilon = 0$ (via a different proof), though we will not explore this possibility in this paper.

Before we continue we will make an important comment on the results in \cite{FKS,M} compared to Theorems \ref{MainThmSob} and \ref{MainThmPoin}.  Note that Theorems \ref{MainThmSob} and \ref{MainThmPoin} contain the term $|Q|^{-1}$ as compared to the weighted measure of a cube (or really a ball) in \cite{FKS,M}.  Most likely this is due to the fact that the ``matrix weighted measure" of a level set makes no sense. In particular, this fact forces us to use a much different matrix weighted maximal function to prove Theorems \ref{MainThmSob} and \ref{MainThmPoin} as compared to the maximal function used in \cite{FKS,M}. It is not even clear what an appropriate replacement for the weighted measure of a cube/ball would be in the matrix case.  We also note that the Poincar\'e and Sobolev inequalities contained in \cite{FKS} show gains on the left of the form $1\leq q\leq \frac{n}{n-1}p+\delta$ for some $\delta>0$.  However, our Poincar\'e inequalities have gains on both the left and the right and it is for this reason (among those mentioned) that we do not obtain the same sharp exponents that are contained in \cite{FKS}.  Furthermore, because of this fact, one would be hard pressed to even attempt to define the ``matrix weighted weak type space" $L^{p, \infty} (W)$ when $W$ is a matrix weight.  {\color{red} Despite this, it would be interesting to know whether the weak type/truncation method used in \cite{LN,LMPT} can be used to prove results related to the ones proved in this paper.}

It is well known that Poincar\'e inequalities follow from bounds on the fractional integral operators
 $$I_\al f(x)=\int_{\R^d}\frac{f(y)}{|x-y|^{d-\al}}\,dy, \qquad 0<\al<d$$
 and their corresponding fractional maximal operators
 $$M_\al f(x)=\sup_{Q\ni x}\frac{1}{|Q|^{1-\frac{\al}{d}}}\int_Q|f(y)|\,dy, \qquad 0\leq \al<d.$$
Such operators play a crucial role in the theory of the smoothness of functions.  The fractional integral operator acts as an anti-derivative and hence its boundedness implies the Sobolev embedding theorems. While our Sobolev and Poincar\'e inequalities will not necessarily follow from matrix weighted bounds for fractional integral operators (and in fact the proof of the former in the local setting will be quite a bit more involved then then proof of the latter), we will nevertheless be interested in proving such bounds for their own sake.  Note that this is in contrast with the scalar situation where the above mentioned results in \cite{FKS,M} rely heavily on the classical weighted norm inequalities for fractional integral operators from \cite{MW1}.

First let us recall the results in the scalar case.  Muckenhoupt and Wheeden \cite{MW} characterized the weights $w$ for which
$M_\al$ and $I_\al$ are bounded on weighted Lebesgue spaces.  In particular, they showed that if $1<p< d / \alpha $ and $q$ is defined by $\frac1q=\frac1p-\frac{\al}{d}$, then $I_\al$ and $M_\al$ are bounded from $L^p(w^{\frac{p}{q}})$ to $L^q(w)$ if and only if $w\in A_{p,q}$:
$$[w]_{A_{p,q}}=\sup_{Q}\left(\frac{1}{|Q|}  \int_Q w(x)\,dx\right)\left(\frac{1}{|Q|}  \int_Q w(x)^{-\frac{p'}{q}}\,dx\right)^{\frac{q}{p'}}<\infty.$$
  Lacey et. al. \cite{LMPT}, found the sharp uppers bounds on the operator norms in terms of the constant $[w]_{A_{p,q}}$ showing that
\begin{equation}\label{Malsharp} \|M_\al\|_{L^p(w^{\frac{p}{q}})\ra L^q(w)}\lesssim [w]_{A_{p,q}}^{(1-\frac{\al}{d})\frac{p'}{q}}\end{equation}
and
 \begin{equation}\label{Ialsharp}\|I_\al\|_{L^p(w^{\frac{p}{q}})\ra L^q(w)}\lesssim [w]_{A_{p,q}}^{(1-\frac{\al}{d})\max(1,\frac{p'}{q})}.\end{equation}

We will study the matrix weighted case of these results.  Given a matrix weight $W$ and a pair of exponents $p$ and $q$ we define the matrix A$_{p,q}$ constant as follows

\begin{equation*}[W]_{A_{p,q}}=\sup_{Q } \, \frac{1}{|Q|}  \int_Q \left( \frac{1}{|Q|}  \int_Q \|W^\frac{1}{q} (x) W^{-\frac{1}{q}}(y) \|^{p'} \, dy \right)^\frac{q}{p'} \, dx, \end{equation*}
where the supremum is over all cubes contained in $\R^d$.   A matrix weight $W$ belongs to A$_{p,q}$ if $[W]_{A_{p,q}}<\infty$.  We define the matrix weighted fractional maximal function as follows
\begin{equation*} M_{W, \alpha}  \V{f} (x) = \sup_{Q \ni x } \frac{1}{|Q|^{1 - \frac{\alpha}{d}}} \int_Q |W^{\frac1q}(x) W^{-\frac{1}{q}} (y) \V{f} (y)| \, dy \end{equation*}
where the supremum is over all cubes that contain $x$.  We will be concerned with $L^p\ra L^q$ bounds for $M_{W, \al}$.  Our first result is the following.
\begin{theorem} \label{thm:max}Suppose $0 \leq  \al<d$, $1<p<\frac{d}{\al}$ and $q$ is defined by $\frac1q=\frac1p-\frac{\al}{d}$.  If $W\in \text{A}_{p,q}$ then
\begin{equation}\label{MatrixMalsharp} \|M_{W,\al}\|_{L^p\ra L^q}\lesssim [W]_{\text{A}_{p,q}}^{\frac{p'}{q}(1-\frac{\al}{d})}\end{equation}
and this bound is sharp.
\end{theorem}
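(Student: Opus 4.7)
By the standard one-third trick, it suffices to bound the dyadic maximal operator
$$M_{W,\alpha}^{\mathcal{D}}\vec f(x) = \sup_{\mathcal{D}\ni Q\ni x}\frac{1}{|Q|^{1-\alpha/d}}\int_Q\bigl|W^{1/q}(x)W^{-1/q}(y)\vec f(y)\bigr|\,dy$$
for finitely many shifted dyadic grids $\mathcal{D}$. Applying H\"older's inequality in $y$ with exponents $p'$ and $p$ inside each cube yields the pointwise estimate
$$\frac{1}{|Q|^{1-\alpha/d}}\int_Q\bigl|W^{1/q}(x)W^{-1/q}(y)\vec f(y)\bigr|\,dy \leq |Q|^{\alpha/d}\,\rho_Q(x)\,\Bigl(\frac{1}{|Q|}\int_Q|\vec f|^p\Bigr)^{1/p},$$
where $\rho_Q(x) := \bigl(\tfrac{1}{|Q|}\int_Q\|W^{1/q}(x)W^{-1/q}(y)\|^{p'}\,dy\bigr)^{1/p'}$. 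The matrix $A_{p,q}$ condition is exactly the statement $\tfrac{1}{|Q|}\int_Q\rho_Q^q\,dx \leq [W]_{A_{p,q}}$, so $\rho_Q$ now plays the role of the scalar weight $w^{1/q}$ in the scalar fractional maximal bound of~\cite{LMPT}.

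Next, perform a Calder\'on--Zygmund stopping time on the scalar function $|\vec f|^p$: let $\mathcal{S}\subset\mathcal{D}$ be the maximal dyadic cubes whose average of $|\vec f|^p$ exceeds $2^d$ times the average on their previous stopping ancestor. The resulting family $\mathcal{S}$ is $\tfrac12$-sparse, and every $Q$ has a smallest stopping ancestor $P_Q\in\mathcal{S}$ with $\langle|\vec f|^p\rangle_Q \leq 2^d\langle|\vec f|^p\rangle_{P_Q}$. Combined with the previous inequality, this produces the sparse domination
$$M_{W,\alpha}^{\mathcal{D}}\vec f(x) \;\lesssim\; \sum_{P\in\mathcal{S}}\langle|\vec f|^p\rangle_P^{1/p}\,\Phi_P(x)\,\chi_P(x),\qquad \Phi_P(x) := \sup_{\substack{Q\subseteq P,\ P_Q=P\\ Q\ni x}}|Q|^{\alpha/d}\rho_Q(x),$$
and summing the $A_{p,q}$ bound $\int_Q\rho_Q^q \leq [W]_{A_{p,q}}|Q|$ over the geometrically shrinking scales $Q\subset P$ (convergence is ensured when $\alpha > 0$) yields $\int_P\Phi_P^q\,dx \lesssim [W]_{A_{p,q}}|P|^{1+\alpha q/d}$. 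The endpoint $\alpha = 0$ (the Hardy--Littlewood matrix maximal operator) is treated by a separate John--Nirenberg-type argument applied to $\rho_Q$.

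The final step is to compute the $L^q$ norm of the sparse form. Following the LMPT strategy, I would dualize against $g \in L^{q'}$ with $\|g\|_{q'}=1$, apply H\"older in $x$ to each $\int_P g\,\Phi_P\,dx$ with exponents $q$ and $q'$, and then invoke a two-weight sparse Carleson embedding to distribute the $A_{p,q}$ characteristic correctly across the sum over $P \in \mathcal{S}$. The delicate step is precisely here: a crude H\"older alone produces only the exponent $[W]_{A_{p,q}}^{1/q}$, whereas the sharp exponent $[W]_{A_{p,q}}^{(p'/q)(1-\alpha/d)}$ requires the balanced argument of~\cite{LMPT}, in which the $A_{p,q}$ characteristic is split between the weight side and the co-weight side of the sparse form. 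Sharpness of the exponent then follows by restricting to scalar weights $W = wI$, since on scalar-valued $\vec f$ the operator $M_{W,\alpha}$ reproduces the classical scalar fractional maximal operator, whose sharp bound was established to be attained in~\cite{LMPT} by a suitable power weight.
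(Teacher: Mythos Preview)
Your proposal has a genuine gap at the point you yourself flag as ``delicate.'' After your H\"older step with exponents $(p',p)$ and the bound $\int_P\Phi_P^q\lesssim[W]_{A_{p,q}}|P|^{1+\alpha q/d}=[W]_{A_{p,q}}|P|^{q/p}$, the sparse sum you must control is essentially $[W]_{A_{p,q}}\sum_{P\in\mathcal S}(\int_P|\vec f|^p)^{q/p}$. This sum is \emph{not} bounded by $\|\vec f\|_{L^p}^q$ with a universal constant: for a nested chain of $N$ stopping cubes one already gets a factor comparable to $N$. No two-weight Carleson embedding will rescue this, because the loss occurred at the very first step --- applying H\"older at the exact exponent $p$ leaves no room. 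The LMPT ``balanced'' mechanism you invoke relies on the reverse H\"older openness (an exponent $p-\epsilon$), which your argument never introduces.

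The paper's proof avoids this by inserting the reducing operator $\widetilde V_Q$ to factor $W^{1/q}(x)W^{-1/q}(y)=(W^{1/q}(x)\widetilde V_Q)(\widetilde V_Q^{-1}W^{-1/q}(y))$. The first factor is handled by a stopping-time lemma (your ``John--Nirenberg-type'' ingredient) giving $\int_Q N_Q^q\lesssim[W]_{A_{p,q}}|Q|$; the second factor defines an auxiliary maximal operator $M'_{W,\alpha}$ on which one \emph{can} apply the scalar reverse H\"older inequality to the A${}_\infty$ weights $|W^{-1/q}\vec e|^{p'}$, yielding $\|M'_{W,\alpha}\|_{L^p\to L^q}^q\lesssim[W]_{A_{p,q}}^{r'-1}$ with $r'=1+p'/q$. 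The two pieces are then reassembled via the level sets of $M'_{W,\alpha}$ (not a CZ decomposition of $|\vec f|^p$), producing the sharp exponent $r'/q=(p'/q)(1-\alpha/d)$. Your $\rho_Q$ bundles both factors together, so the reverse H\"older step --- which is where the correct power of $[W]_{A_{p,q}}$ actually comes from --- is unavailable to you.
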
Inequality \eqref{MatrixMalsharp} is the matrix valued version of \eqref{Malsharp}.  In fact, the sharpness of \eqref{MatrixMalsharp} follows from the scalar case because a better bound for the matrix case would imply a better bound for the scalar case.  We remark that the proof is a modification of the arguments found in \cite{Ae,G}.

For the fractional integral operator we have the following result.

\begin{theorem} \label{thm:frac}Suppose $0 <  \al<d$, $1<p< d/\al$ and $q$ is defined by $\frac1q=\frac1p-\frac{\al}{d}$.  If $W\in \text{A}_{p,q}$ then $I_\al:L^p(W^{\frac{p}{q}})\ra L^q(W)$ and
\begin{equation}\label{Matrixfracint}\|I_\al\|_{L^p(W^{\frac{p}{q}})\ra L^q(W)}\lesssim  [W]_{\text{A}_{p,q}}^{(1 - \frac{\alpha}{d}) \frac{p'}{q} + \frac{1}{q'}} \end{equation}
\end{theorem}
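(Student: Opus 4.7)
The plan is to reduce the claimed operator norm, via duality and a dyadic decomposition of the fractional kernel, to a scalar bilinear form with a matrix-dependent kernel that can be controlled through the $A_{p,q}$ condition together with the sharp bound of Theorem \ref{thm:max} for the matrix fractional maximal function.

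First, I would substitute $\vec{g}(x)=W^{1/q}(x)\vec{f}(x)$, reducing the claim to
\[
\left(\int_{\R^d}\bigl|W^{1/q}(x)\,I_\alpha(W^{-1/q}\vec{g})(x)\bigr|^{q}\,dx\right)^{1/q}\lesssim [W]_{A_{p,q}}^{(1-\alpha/d)p'/q+1/q'}\|\vec{g}\|_{L^p}.
\]
Dualizing against $\vec{h}\in L^{q'}$ and applying the pointwise matrix bound $|\langle W^{1/q}(x)W^{-1/q}(y)\vec{u},\vec{v}\rangle|\le\|W^{1/q}(x)W^{-1/q}(y)\|\,|\vec{u}|\,|\vec{v}|$ reduces matters to estimating the scalar bilinear form
\[
B(g,h):=\iint_{\R^d\times\R^d}\frac{\|W^{1/q}(x)W^{-1/q}(y)\|\,g(y)\,h(x)}{|x-y|^{d-\alpha}}\,dx\,dy,
\]
where $g=|\vec{g}|$ and $h=|\vec{h}|$.

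Next, I would dyadically decompose the kernel using $|x-y|^{\alpha-d}\lesssim\sum_{Q\in\mathcal{D},\,Q\ni x,y}|Q|^{\alpha/d-1}$ (summed over a finite collection of shifted dyadic grids), so that $B(g,h)$ is controlled by
\[
\sum_{Q\in\mathcal{D}}|Q|^{\alpha/d-1}\iint_{Q\times Q}\|W^{1/q}(x)W^{-1/q}(y)\|\,h(x)\,g(y)\,dx\,dy.
\]
Two successive applications of H\"older's inequality, first in $y$ with exponents $(p,p')$ and then in $x$ with $(q,q')$, together with the defining inequality of the matrix $A_{p,q}$ class
\[
\int_Q\left(\frac{1}{|Q|}\int_Q\|W^{1/q}(x)W^{-1/q}(y)\|^{p'}\,dy\right)^{q/p'}dx\leq [W]_{A_{p,q}}\,|Q|,
\]
extract a factor of $[W]_{A_{p,q}}^{1/q}$ per cube and leave a purely scalar dyadic bilinear sum involving the averages $\langle g^p\rangle_Q^{1/p}$ and $\langle h^{q'}\rangle_Q^{1/q'}$.

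The final step, which is the main technical obstacle, is to control this residual scalar bilinear sum by the appropriate constant times $\|g\|_p\|h\|_{q'}$, with the constant absorbing the remaining factor of $[W]_{A_{p,q}}$ needed to reach the target exponent. I would handle this via a Calder\'on--Zygmund principal cube / stopping-time decomposition applied to the averages of $g^p$ and $h^{q'}$, in the spirit of the P\'erez--Lacey sparse decomposition method for sharp bounds on the scalar dyadic fractional integral. The stopping-time cubes are chosen so that the relevant averages double geometrically between consecutive generations, compressing the dyadic sum onto a sparse family. On this sparse family the remaining bilinear expression reduces to a testing-type estimate for $M_{W,\alpha}$, and Theorem \ref{thm:max} supplies the missing factor $[W]_{A_{p,q}}^{(1-\alpha/d)p'/q}$, yielding the overall exponent $(1-\alpha/d)p'/q+1/q'$. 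The delicate point is organizing the principal-cube decomposition so that exactly this exponent is produced, without introducing extra powers of $[W]_{A_{p,q}}$; unlike the scalar case, the matrix kernel $\|W^{1/q}(x)W^{-1/q}(y)\|$ does not factor as a product of functions of $x$ and $y$ alone, so the two-weight sparse machinery must be adapted to accommodate this coupling between the variables.
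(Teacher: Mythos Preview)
Your reduction to the scalar bilinear form $B(g,h)$ and the dyadic discretization are fine, and these are the same first moves as in the paper. The gap is in what you do next.

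Applying H\"older in $y$ with $(p,p')$ and in $x$ with $(q,q')$ on \emph{each} dyadic cube, and then inserting the $A_{p,q}$ condition, leaves you with
\[
[W]_{A_{p,q}}^{1/q}\sum_{Q\in\mathcal D}\Bigl(\int_Q g^p\Bigr)^{1/p}\Bigl(\int_Q h^{q'}\Bigr)^{1/q'},
\]
since the exponent of $|Q|$ collapses to $0$ by the relation $\alpha/d=1/p-1/q$. This residual sum is \emph{infinite}: take $g=h=\chi_{[0,1]^d}$ and observe that every dyadic ancestor of the unit cube contributes the term $1$. So the H\"older step, performed cube-by-cube over the full dyadic grid, is fatally lossy.

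Your proposed rescue via a sparse decomposition and Theorem~\ref{thm:max} is not coherent as written. After H\"older the residual carries no $W$ whatsoever, so no power of $[W]_{A_{p,q}}$ and no instance of $M_{W,\alpha}$ can emerge from it; the ``missing factor'' $[W]_{A_{p,q}}^{(1-\alpha/d)p'/q}$ has nowhere to come from. (Notice also that, were your argument to close, it would give the bound $[W]_{A_{p,q}}^{1/q}$, which in the diagonal case $p=q=2$, $\alpha=0$ is $[W]^{1/2}$, strictly better than the known sharp scalar bound $[w]^1$; that should already be a red flag.)

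The paper's proof resolves exactly the coupling problem you flag in your last sentence, but \emph{before} any H\"older step, by inserting the reducing operators $\W{V}_Q$:
\[
\bigl|\bigl\langle W^{-1/q}(y)\vec f(y),\,W^{1/q}(x)\vec g(x)\bigr\rangle\bigr|
\le \bigl|\W{V}_Q^{-1}W^{-1/q}(y)\vec f(y)\bigr|\cdot\bigl|\W{V}_Q W^{1/q}(x)\vec g(x)\bigr|.
\]
This decouples $x$ and $y$ without throwing away the matrix structure. The $y$--factor is then controlled via a reverse H\"older argument (Lemma~\ref{IntMaxEst}), contributing $[W]_{A_{p,q}}^{(r'-1)/q}$; the $x$--factor leads to $N_P(x)=\sup_{Q\subseteq P}\|W^{1/q}(x)\W{V}_Q\|$, controlled by the stopping-time Lemma~\ref{StopLem}, contributing $[W]_{A_{p,q}}^{1/q}$; and a principal-cube decomposition with respect to $|\vec f|^{\,p-\epsilon'}$ (not $g^p$ and $h^{q'}$) supplies the sparseness. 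The remaining maximal function bounds give the final factor $[W]_{A_{p,q}}^{1/q'}$. The essential idea you are missing is this reducing-operator factorization; without it the sparse machinery cannot be launched.
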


\noindent Let us make a remark about the bound above.  Formally when $\alpha = 0$ and $p = q = 2$ we get a bound of $[W]_{\text{A}_{2}}^{\frac32}$ which (thanks to Lemma \ref{DyadicLem} in Section \ref{fractionalbds}) is  closely related to  matrix weighted $L^2$ bounds for Calder\'on-Zygmund operators, the best of which at the moment is in fact $[W]_{\text{A}_{2}}^{\frac32}$ (see \cite{IKP,BW} for a proof of this fact for sparse operators and \cite{CDO,NPTV} where such bounds are used to get the matrix weighted $L^2$ bound of $[W]_{\text{A}_{2}}^{\frac32}$ for general CZOs).  Thus, while the bound in \eqref{Matrixfracint} is most likely not sharp, any improvement to (or ideas used to improve) \eqref{Matrixfracint} will most likely lead to improvements to matrix weighted bounds for CZOs, which is known to be very difficult.

Using our Poincar\'e inequalities we are able to prove regularity results for weak solutions to \eqref{linearsys}.  We begin with the following reverse H\"older inequality.  In the uniformly elliptic case, this classical result is due to Meyers.

\begin{theorem} \label{revmeyer} Let $W$ be a matrix A${}_2$ weight, let $\Omega$ be a domain in $\R^d$, and let $W^{-\frac12} F \in L^r(\Omega)$ for some $r > 2$.     If $A = A_{ij}^{\alpha \beta}$ satisfies \eqref{MIEllip1} and \eqref{MIEllip2}, and if $\vec{u} \in H^{1,2}(\Omega, W)$     is a weak solution to \eqref{linearsys}, then there exists $q > 2$ such that given $B_{2r} \subset \Omega$ we have \begin{align*}   \left( \frac{1}{|B_{r/2}|} \int_{B_{r/2}} \|W^\frac{1}{2} (x) D\V{u}  (x)\|^{q} \, dx \right)^\frac{1}{q}  & \lesssim \left( \frac{1}{|B_{r}|} \int_{B_{r}} \|W^\frac{1}{2}  (x) D\V{u} (x) \|^2 \, dx \right)^\frac{1}{2}  \\ & + \left(\frac{1}{|B_r|} \int_{B_r} \|W^{-\frac{1}{2}} (x) F(x)\|^{q} \, dx \right)^\frac{1}{q}. \end{align*} \end{theorem}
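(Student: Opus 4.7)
My plan is to follow the classical Meyers--Gehring strategy, in three steps: (1) establish a Caccioppoli inequality for weak solutions of \eqref{linearsys}, (2) combine it with our matrix weighted Poincar\'e inequality (Theorem \ref{MainThmPoin}) to obtain a reverse H\"older estimate with exponent strictly less than $2$ on the right, and (3) invoke Gehring's lemma (the Giaquinta--Modica version) to upgrade this to the desired $L^q$ bound for some $q > 2$.

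For the Caccioppoli step, fix concentric balls $B_{\rho} \subset B_{R} \subset B_{2r} \subset \Omega$ with $r/2 \leq \rho < R \leq r$, and pick a cutoff $\psi \in C_0^\infty(B_R)$ with $\psi \equiv 1$ on $B_\rho$ and $|\nabla \psi| \lesssim (R-\rho)^{-1}$. Testing the weak form of \eqref{linearsys} against $\vec{\phi} = \psi^2 (\vec{u} - \vec{u}_{B_R})$ and expanding $\partial_\alpha \phi_i = \psi^2 \partial_\alpha u_i + 2\psi (\partial_\alpha \psi)(u_i - (u_i)_{B_R})$, the diagonal piece together with \eqref{MIEllip1} (applied to the matrix $\psi D\vec{u}$) produces the coercive lower bound $\int \psi^2 \|W^{1/2} D\vec{u}\|^2$. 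The cross term, estimated via \eqref{MIEllip2} with $\nu = \psi D\vec{u}$ and $\eta = (\vec{u} - \vec{u}_{B_R}) \otimes \nabla \psi$, is controlled by $\int \psi \|W^{1/2} D\vec{u}\|\,|W^{1/2}(\vec{u} - \vec{u}_{B_R})|\,|\nabla \psi|$, while the forcing term splits into contributions bounded by $\int \psi^2 \|W^{-1/2}F\|\,\|W^{1/2}D\vec{u}\|$ and $\int \psi|\nabla \psi|\,\|W^{-1/2}F\|\,|W^{1/2}(\vec{u} - \vec{u}_{B_R})|$. Two applications of Young's inequality absorb the $\|W^{1/2} D\vec{u}\|^2$ contributions into the left-hand side, yielding
\begin{equation*}
\int_{B_\rho} \|W^{1/2} D\vec{u}\|^2 \lesssim \frac{1}{(R-\rho)^2} \int_{B_R} |W^{1/2}(\vec{u} - \vec{u}_{B_R})|^2 + \int_{B_R} \|W^{-1/2} F\|^2.
\end{equation*}

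Next, Theorem \ref{MainThmPoin} with $p = 2$ (together with Jensen's inequality to drop the improved exponent $2+\epsilon$ down to $2$) gives
\begin{equation*}
\frac{1}{|B_R|}\int_{B_R} |W^{1/2}(\vec{u} - \vec{u}_{B_R})|^2 \lesssim R^2 \left(\frac{1}{|B_R|}\int_{B_R} \|W^{1/2} D\vec{u}\|^{2-\epsilon}\right)^{\frac{2}{2-\epsilon}}.
\end{equation*}
Choosing for instance $\rho = r/2$, $R = r$ and normalizing by $|B_r|$, I obtain the reverse H\"older inequality with increasing supports
\begin{multline*}
\left(\frac{1}{|B_{r/2}|}\int_{B_{r/2}} \|W^{1/2} D\vec{u}\|^{2}\right)^{1/2} \\ \lesssim \left(\frac{1}{|B_r|}\int_{B_r} \|W^{1/2} D\vec{u}\|^{2-\epsilon}\right)^{\frac{1}{2-\epsilon}} + \left(\frac{1}{|B_r|}\int_{B_r}\|W^{-1/2}F\|^{2}\right)^{1/2}
\end{multline*}
with exponent $s := 2 - \epsilon < 2$ on the right. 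A standard application of Gehring's lemma then produces $q > 2$ and the stated conclusion, with the forcing term automatically upgraded to $L^q$ as part of the lemma.

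The main obstacle is the Caccioppoli step: verifying that $\psi^2(\vec{u} - \vec{u}_{B_R})$ is an admissible test function in the matrix weighted Sobolev space $H^{1,2}(\Omega, W)$, and carefully controlling the cross term, where one must bound $\|W^{1/2}[(\vec{u} - \vec{u}_{B_R}) \otimes \nabla \psi]\|$ by $|W^{1/2}(\vec{u} - \vec{u}_{B_R})|\,|\nabla \psi|$ (this uses that the matrix has rank one in the $d$-direction and that $W^{1/2}$ acts only on the $n$-index). Once that bookkeeping is in place, the remainder is a textbook Gehring bootstrap, for which the $(2+\epsilon, 2-\epsilon)$ self-improvement provided by Theorem \ref{MainThmPoin} is precisely what the argument needs.
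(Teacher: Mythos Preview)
Your proposal is correct and follows essentially the same approach as the paper: the paper proves the Caccioppoli inequality \eqref{CaccioIneq} as a separate lemma (testing with $\eta^2\vec{u}$ and then applying it to $\vec{u}-\vec{u}_{B_r}$, which is also a solution), combines it with Theorem \ref{MainThmPoin} at $p=2$ exactly as you do to obtain the reverse H\"older inequality with exponent $2-\epsilon$ on the right, and then invokes the Giaquinta--Modica version of Gehring's lemma. Your bookkeeping concerns about the rank-one tensor term and the admissibility of the test function are handled in the paper's Caccioppoli lemma just as you anticipate.
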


     In Section $6$ we will also use some of the ideas in the recent paper \cite{FBT} to extend this Meyers reverse H\"{o}lder inequality to solutions of nonhomogenous degenerate $p-$Laplacian systems with a matrix A${}_p$ degeneracy. Namely we will prove the following.

\begin{theorem} \label{FBTThm} Let $p > 2$.  Suppose that $G : \Omega \rightarrow \Md$ and $W$ is a matrix weight A${}_p$ weight.  Let $G$ satisfy

\vspace{3mm}

(i') $ \ip{\eta G(x)}{\eta}_{\tr}   \gtrsim \|W^{1/p}(x)\eta\|^2,  \qquad \eta \in {\color{red}  \Mnd}$

(ii') $|\ip{\eta G(x)}{\nu}_{\tr} | \lesssim  \|W^{1/p}(x)\eta\|\|W^{1/p} (x)\nu\|, \qquad \eta,\nu\in \Mnd$

\noindent and assume there exists $r > p'$ where $W^{-\frac{1}{p}} F \in L^{r}(\Omega)$.  If $\vec{u} \in H^{1,p}(\Omega, W)$ is a weak solution to  \begin{equation} \label{PLap}  \Div \left[ \ip{D\V{u} G}{D\V{u}}_{\tr} ^{\frac{p-2}{2}} D\vec{u} G \right] = - \Div F \end{equation} then there exists $q > p$ such that given $ B_{2r} \subset \Omega$ we have \begin{align*} \left(\frac{1}{|B_{r/2} |} \int_{B_{r/2}} \|W^\frac{1}{p} (x) D\V{u} (x)\|^{q} \, dx \right)^\frac{1}{q} & \lesssim \left(\frac{1}{|B_{r}|} \int_{B_{r}} \|W^\frac{1}{p} (x) D\V{u} (x)\|^p \, dx \right)^{\frac{1}{p}} \\ & + \left(\frac{1}{|B_r|} \int_{B_r} \|W^{-\frac{1}{p}}(x) F(x)\|^{\frac{q p'}{p}} \, dx \right)^\frac{1}{q}. \end{align*} \end{theorem}

Furthermore, we will prove the existence of weak solutions to \eqref{PLap} for all $p > 1$  in the special case when $ F = 0, \vec{u} : \Omega \rightarrow \Rn, G : \Omega \rightarrow {\color{red}\MdR},$ and $W : \Rd  \rightarrow\MnR$ (see Theorem \ref{nonlinexist}).  Intriguingly, note that (i') and (ii') are easily seen to be immediately satisfied when $n  = d$ and $G$ is itself an $d \times d$ matrix weight on $\Rd$ and $W = G^\frac{p}{2}$. Thus, when $n = d$, this gives us the existence of weak solutions and Theorem \ref{FBTThm} (when $p > 2$) for \eqref{PLap} when $G ^\frac{p}{2}$ is itself a matrix A${}_p$ weight, which are two results that we believe are of independent interest.

Finally, we end with the last of our main result: a local regularity theorem for weak solutions in dimension two.
\begin{theorem} \label{thm:locreg} Let $d = 2$ and $\V{u}$ be a weak solution to \eqref{linearsys} when $F = 0$. Suppose that $B_{\color{red}7R} \subseteq \Omega$ is an open ball of radius ${\color{red}7R}$ and $B = B_R$ is the concentric ball with radius $R$. Then there exists $\epsilon \approx [W]_{\text{A}_2}^{-8}$ such that for $x, y \in B$, we have  \begin{equation*} |\V{u}(x) - \V{u} (y)| \lesssim C_{x, y} {\color{red} R^{-3\epsilon} |x - y|^\epsilon} \end{equation*} where  \begin{equation*} C_{x, y} = \left(\sup \frac{1}{|B'|^{1 - \epsilon}} \int_{B'} \|W^{-\frac12}(\xi)  \|^2  \, d\xi\right)^\frac12 \end{equation*} where the supremum is over balls $B' \subset \Omega$ centered either at $x$ or $y$, and having radius $\leq 2|x - y|$. \end{theorem}  \noindent As with Theorem \ref{revmeyer}, we will also extend Theorem \ref{thm:locreg} to weak solutions of \eqref{PLap} when $F = 0$ {\color{red}(see Section \ref{regularity}).}  Note that it would be very interesting to know whether one can use Theorem \ref{thm:locreg} to prove continuity a.e. of weak solutions to \eqref{linearsys} when $F = 0$. Much more generally, it would be interesting to know whether one can modify existing but deeper techniques from the theory of elliptic systems in conjunction with our matrix weighted Poincare and Sobolev inequalities to improve upon our regularity results.

In the special case when $A_{ij}^{\alpha \beta} (x)= B_{ij} (x) \delta_{\alpha \beta} $ for some $\Mn$ valued function $B$, the system \eqref{linearsys} becomes \begin{equation} \div (B(x) D\V{u}(x)) = -(\div F)(x). \label{SpecialEllptic} \end{equation} Such systems were considered by Iwaniec/Martin \cite{IM}, Huang \cite{HU}, and Stroffolini \cite{S}. Of particular interest is when $B$ itself is a matrix A${}_2$ weight, and  Theorems \ref{revmeyer} and \ref{thm:locreg} are of independent interest themselves in this case.

The plan of the paper will be as follows.  In Section \ref{Preliminaries} we will state some notation that will be used throughout the paper.  In Section \ref{fractionalbds} we will prove Theorem \ref{thm:max} and Theorem \ref{thm:frac}.  We will prove the Poincar\'e and Sobolev inequalities in Section \ref{Poincare} and prove the existence results in Section \ref{existsec}.  Finally we finish the manuscript with the proof of the local regularity of weak solutions including the proofs of the Meyers reverse H\"older estimates (Theorem \ref{revmeyer}) and the local regularity in dimension two (Theorem \ref{thm:locreg}) in Section \ref{regularity}.  Note that despite the length of this manuscript, we have tried to make it as self contained as possible.

We will end this section by mentioning a vast family of examples of matrix A${}_2$ ``power" weights, and it is through these examples that we hope our results will find applications to concrete degenerate elliptic systems (particularly with respect to \eqref{SpecialEllptic} when $A$ is a matrix A${}_2$ ``power" weight.)  More precisely, Bickel,  Lunceford, and  Mukhtar recently proved the following in  \cite{BLM}.

 \begin{theorem} \label{BLM} Let $A = (a_{ij})$ be a positive definite  $n \times n$ matrix.

  \begin{list}{}{}
  \item $1) \ $ If $\gamma_{ij} \in \mathbb{R}$ for $i, j = 1, \ldots, n$ then a matrix weight $W$ of the form $W_{ij} (x) = a_{ij} |x|^{\gamma_{ij}}$ is a matrix A${}_2$ weight if and only if   $-d< \gamma_{ii} <d$ and each $\gamma_{ij} = (\gamma_{ii} + \gamma_{jj})/2$ for all $i, j  = 1, \ldots, n$.  \\

         \item $2) \ $ If $\gamma_{ii} ^k \in \mathbb{R}$ for $i, j = 1, \ldots, n$ and $k = 1, \ldots, d$ then a matrix weight $W$ of the form $W_{ij} (x) = a_{ij} |x_1|^{\gamma_{ij} ^1} \cdots |x_d| ^{\gamma_{ij} ^d} $ is a matrix A${}_2$ weight if and only if   $-1< \gamma_{ii} ^k < 1$, and each $\gamma_{ij} ^k = (\gamma_{ii} ^k + \gamma_{jj} ^k)/2$ for all $i, j  = 1, \ldots, n, k = 1, \ldots, d$.\end{list}
\end{theorem}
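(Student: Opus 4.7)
The key to both parts is a single structural observation: the exponent condition $\gamma_{ij}=(\gamma_{ii}+\gamma_{jj})/2$ is precisely what allows one to factor $W$ as a diagonal conjugation of the constant matrix $A$. That is, writing $D(x) := \operatorname{diag}\bigl(|x|^{\gamma_{11}/2},\ldots,|x|^{\gamma_{nn}/2}\bigr)$ in part (1), and analogously $D(x)=\operatorname{diag}\bigl(\prod_{k=1}^d |x_k|^{\gamma_{ii}^k/2}\bigr)_{i=1}^n$ in part (2), one has $W(x)=D(x)\,A\,D(x)$ and therefore $W^{-1}(x)=D(x)^{-1}A^{-1}D(x)^{-1}$. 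All subsequent analysis is then reduced to the classical scalar $A_2$ theory for power weights.

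For the sufficiency direction I would use the trace form $[W]_{\text{A}_2}\simeq\sup_Q\tr(\langle W\rangle_Q\langle W^{-1}\rangle_Q)$ recorded in the introduction. Expanding with the factorization, a direct computation yields
\begin{equation*}
\tr(\langle W\rangle_Q\langle W^{-1}\rangle_Q)=\sum_{i,j=1}^n a_{ij}(A^{-1})_{ji}\,\langle w_{ij}\rangle_Q\,\langle w_{ij}^{-1}\rangle_Q,
\end{equation*}
where $w_{ij}(x)=|x|^{(\gamma_{ii}+\gamma_{jj})/2}$ in part (1) and $w_{ij}(x)=\prod_{k=1}^d |x_k|^{(\gamma_{ii}^k+\gamma_{jj}^k)/2}$ in part (2). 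Each factor $\langle w_{ij}\rangle_Q\langle w_{ij}^{-1}\rangle_Q$ is uniformly bounded by the scalar $A_2$ constant of $w_{ij}$. Classically, $|x|^\alpha\in A_2(\R^d)$ iff $-d<\alpha<d$, while $\prod_k |x_k|^{\alpha_k}\in A_2(\R^d)$ iff each $|\alpha_k|<1$ (separation of variables on axis-aligned cubes reduces this to the one-dimensional $A_2$ condition for $|t|^{\alpha_k}$ on $\R$). The assumed ranges $-d<\gamma_{ii}<d$ (resp.\ $|\gamma_{ii}^k|<1$) then force $|(\gamma_{ii}+\gamma_{jj})/2|<d$ (resp.\ $|(\gamma_{ii}^k+\gamma_{jj}^k)/2|<1$) for every $i,j$, so $[W]_{\text{A}_2}$ is bounded by a finite sum of finite constants.

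For the necessity direction I would proceed in two stages. Since $W(x)$ is positive definite almost everywhere and its entries are continuous on $\R^d\setminus\{0\}$, $W$ is in fact positive definite everywhere off the origin. Every $2\times 2$ principal minor is therefore strictly positive: $a_{ii}a_{jj}|x|^{\gamma_{ii}+\gamma_{jj}}>|a_{ij}|^2|x|^{2\gamma_{ij}}$ for all $x\ne 0$. A difference of two distinct powers of $|x|$ necessarily changes sign on $(0,\infty)$, so whenever $a_{ij}\ne 0$ the two exponents must coincide, which forces $\gamma_{ij}=(\gamma_{ii}+\gamma_{jj})/2$; when $a_{ij}=0$ the entry $W_{ij}$ vanishes and the value of $\gamma_{ij}$ is immaterial. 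Once the factorization $W=DAD$ is established, the scalar diagonal entries $W_{ii}(x)=a_{ii}|x|^{\gamma_{ii}}$ and $(W^{-1})_{ii}(x)=(A^{-1})_{ii}|x|^{-\gamma_{ii}}$ must both be locally integrable on $\R^d$, which enforces $-d<\gamma_{ii}<d$. Part (2) runs in exact parallel, with the principal-minor argument applied coordinate-by-coordinate and the local integrability enforced in each variable separately to yield $|\gamma_{ii}^k|<1$.

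The only genuine subtlety I anticipate is the principal-minor step: one must observe that two distinct monomials in $|x|$ cannot satisfy a fixed strict inequality for all $x\ne 0$, so either the exponents agree or the off-diagonal coefficient vanishes. Beyond this structural point the argument is bookkeeping against the scalar $A_2$ theory for power weights and I do not expect any additional technical obstacle.
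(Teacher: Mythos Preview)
The paper does not prove this theorem: it is quoted from \cite{BLM} (Bickel, Lunceford, Mukhtar) as a source of examples, with no argument given here. So there is no ``paper's own proof'' to compare against.

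Your sketch is essentially correct and is the natural approach. The factorization $W(x)=D(x)\,A\,D(x)$ is indeed the heart of the matter, and once it is in hand both directions reduce cleanly to the scalar power-weight $A_2$ theory. Two small points worth tightening. First, in the sufficiency computation the coefficients $a_{ij}(A^{-1})_{ji}$ need not be nonnegative, so the displayed identity for the trace does not by itself give a bound; you should insert a triangle inequality, bounding $\tr(\langle W\rangle_Q\langle W^{-1}\rangle_Q)$ by $\sum_{i,j}|a_{ij}|\,|(A^{-1})_{ji}|\,[w_{ij}]_{A_2}$. Second, in the necessity direction your principal-minor argument only determines $\gamma_{ij}$ when $a_{ij}\neq 0$; you already note this, but then the factorization $W=DAD$ that you invoke for the range condition strictly speaking requires all $\gamma_{ij}$ to satisfy the relation. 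The fix is trivial (when $a_{ij}=0$ you may redefine $\gamma_{ij}$ without changing $W$), but it should be said. With these two cosmetic adjustments the argument is complete.
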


\noindent Given the results of this paper, it would clearly be interesting to extend Theorem \ref{BLM} to the $p \neq 2$ case.
\section{Preliminaries}\label{Preliminaries}

We will first need the notion of dyadic grid.  Cubes will always be assumed to have sides parallel to the coordinate axes and we will denote the side-length of a cube $Q$ as $\ell(Q)$.   A dyadic grid, usually denoted $\mathscr D$ will be a collection of cubes that satisfy the following three properties:
\begin{enumerate}
\item If $Q\in \mathscr D$ then $\ell(Q)=2^k$ for some $k\in \Z$.
\item If $\D^k=\{Q\in \D:\ell(Q)=2^k\}$, then $\Rd=\bigcup_{Q\in \D_k}Q.$
\item If $Q,P\in \D$ then $Q\cap P$ is either $\varnothing,Q,$ or $P$.
\end{enumerate}
We will use the following well known fact about dyadic grids whose proof can be found in a recent manuscript by Lerner and Nazarov \cite{LN}.
\begin{proposition}\label{dyadic} Let $\D^t = \{2^{-k} ([0, 1) ^d + m  + (-1)^k t)  : k \in \Z, m \in \Z^d\}$, then given any cube $Q$, there exists $1\leq t\leq 2^d $ and $Q_t\in \D^t$ such that $Q\subset Q_t$ and $\ell(Q_t)\leq 6\ell(Q)$.
\end{proposition}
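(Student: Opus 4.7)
My approach will be a direct coverage argument; the only real work is fixing the scale correctly and choosing an appropriate family of $2^d$ shifts. First, I will fix the scale: given $Q$ with side length $\ell(Q)$, I let $k$ be the smallest integer with $2^k \geq 3\ell(Q)$, so that $\rho := \ell(Q)/2^k \leq 1/3$ and $2^k < 6\ell(Q)$. Any cube $Q_t$ of side length $2^k$ containing $Q$ will then automatically satisfy $\ell(Q_t) \leq 6\ell(Q)$.

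Next I will translate the containment $Q \subset Q_t$ for $Q_t \in \D^t$ at this scale into a statement about the center $c_Q$ of $Q$. A cube of side length $2^k$ in $\D^t$ has the form $2^k([0,1)^d + m + (-1)^k t)$, so $Q \subset Q_t$ for some such $Q_t$ if and only if $c_Q/2^k - (-1)^k t$, reduced modulo $\Z^d$, lies in the ``good region'' $[\rho/2,\, 1 - \rho/2]^d \subset [0,1)^d$.

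To produce the $2^d$ shifts, I will let $t$ range over the $2^d$ vectors of $T := \{0, \tfrac12\}^d \subset \R^d$, enumerated as $t = 1, \ldots, 2^d$. Because $\tfrac12 \equiv -\tfrac12 \pmod 1$, the family of effective shifts $\{(-1)^k t \bmod \Z^d : t \in T\}$ equals $T$ regardless of the parity of $k$. A one-line coordinatewise check shows that for $\rho \leq 1/3$ the two shifted good intervals $[\rho/2,\, 1-\rho/2]$ and $[1/2 + \rho/2,\, 3/2 - \rho/2] \bmod 1$ together cover $[0,1)$; taking products, the $2^d$ translated good regions cover $\mathbb{T}^d$. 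Hence some $t \in T$ places $c_Q/2^k$ inside its good region, yielding the desired $Q_t \in \D^t$ with $Q \subset Q_t$ and $\ell(Q_t) = 2^k \leq 6\ell(Q)$.

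The only nontrivial step is the coordinatewise coverage verification, which is elementary arithmetic; the main subtlety is matching the sign $(-1)^k$ to a $k$-independent family of shifts, which is what forces the symmetric choice $T = -T \pmod{\Z^d}$.
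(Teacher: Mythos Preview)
The paper does not prove this proposition itself; it merely cites Lerner--Nazarov. Your direct coverage argument is essentially the standard one and the containment step is carried out correctly. There is, however, a genuine gap in your choice of shift set $T=\{0,\tfrac12\}^d$: for any $t$ with a coordinate equal to $\tfrac12$, the collection $\D^t$ is \emph{not} a dyadic grid in the paper's sense, because the nesting property~(3) fails. In one dimension with $t=\tfrac12$, the level-$0$ intervals are $[m+\tfrac12,m+\tfrac32)$ while the level-$1$ intervals are $[\tfrac{m'}{2}-\tfrac14,\tfrac{m'}{2}+\tfrac14)$; the child $[\tfrac14,\tfrac34)$ straddles the endpoint $\tfrac12$ and lies in no level-$0$ interval. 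More generally, the level-$(k+1)$ and level-$k$ partitions of $\D^t$ are compatible precisely when $3t\in\Z^d$, which rules out $\tfrac12$. Since every subsequent use of the proposition in the paper relies on the $\D^t$ being genuine dyadic grids, this is not merely cosmetic.

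The fix is immediate and matches what the paper actually uses (see the sums over $t\in\{0,\tfrac13\}^d$ in Lemma~\ref{DyadicLem} and in the proof of Theorem~\ref{MainThmSob}): take $T=\{0,\tfrac13\}^d$. Your worry about the parity-dependent sign $(-1)^k$ is then unnecessary. For fixed $k$ the effective one-dimensional shifts are either $\{0,\tfrac13\}$ or $\{0,\tfrac23\}$; in either case the two shifts are at torus distance $\tfrac13\ge\rho$, so the two ``bad'' arcs of length $\rho\le\tfrac13$ are disjoint and every point of $[0,1)$ lies in at least one good interval $[\rho/2,1-\rho/2]+\sigma$. Your product argument over the $d$ coordinates then finishes the proof exactly as you wrote it.
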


We now establish the machinery of the matrix weights needed for the paper.  Given a cube $Q$, let $\W{V}_Q, \ \W{V}_Q ' $ be a reducing operator (i.e. a positive definite $n \times n$ matrix) where \begin{equation*} |\W{V}_Q \V{e}| \approx \left(\frac{1}{|Q|} \int_Q |W^{-\frac{1}{q}} (x) \V{e}|^{p'} \, dx \right)^\frac{1}{p'}, \ \ \ |\W{V}_Q '  \V{e}| \approx \left(\frac{1}{|Q|} \int_Q |W^\frac{1}{q} (x) \V{e}|^{q } \, dx \right)^\frac{1}{q}. \end{equation*}  In fact we can pick the reducing operators in such a way that \begin{equation*}  \left(\frac{1}{|Q|} \int_Q |W^{-\frac{1}{q}} (x) \V{e}|^{p'} \, dx \right)^\frac{1}{p'} \leq      |\W{V}_Q \V{e}| \leq \sqrt{n} \left(\frac{1}{|Q|} \int_Q |W^{-\frac{1}{q}} (x) \V{e}|^{p'} \, dx \right)^\frac{1}{p'} \end{equation*} and a similar statement holds for $|\W{V}_Q '  \V{e}|$ (see Proposition $1.2$ in \cites{G}).  Using the reducing operators we see that
\begin{align} \label{Apqcond} \sup_{Q} \, \|\W{V}_Q \W{V}_Q '\|^q  & {\color{red} = \sup_{Q} \, \|\W{V}_Q '  \W{V}_Q \|^q }
\\ & \approx [W]_{A_{p,q}} \nonumber
\\ & =\sup_{Q} \, \frac{1}{|Q|} \int_Q \left(\frac{1}{|Q|} \int_Q \|W^\frac{1}{q} (x) W^{-\frac{1}{q}}(y) \|^{p'} \, dy \right)^\frac{q}{p'} \, dx. \nonumber \end{align}

Let $\rho$ be a norm on $\Cn$ and let $\rho^*$ be the dual norm defined by   \begin{equation*} \rho^* (\V{e}) = \sup_{\V{f} \in \Cn} \frac{\left|\ip{\V{e}}{\V{f}}_{\Cn} \right|}{\rho(\V{f})}. \end{equation*}  By elementary arguments we have that $(\rho^*)^* = \rho$ for any norm $\rho$.  Also let \begin{equation*} \rho_{q, Q} (\V{e}) = \left(\frac{1}{|Q|} \int_Q |W^\frac{1}{q} (x) \V{e} |^q \, dx \right)^\frac{1}{q} \end{equation*} so that  $\rho_{q, Q} (\V{e}) \approx |\W{V}_Q ' \V{e}|$ and by trivial arguments $\rho_{q, Q} ^* (\V{e}) \approx |(\W{V}_Q ')^{-1} \V{e}|$.


\section{Bounds for fractional operators} \label{fractionalbds}

In this section we will prove Theorems \ref{thm:max} and \ref{thm:frac}.  We begin with some facts about the matrix A$_{p,q}$ condition.  Throughout this section we will assume that $0\leq\al<d$ (and $0 < \al < d$ when dealing with fractional integral operators), and  $p, q$ satisfy the Sobolev relationship
$$\frac1q=\frac1p-\frac{\al}{d}.$$

\begin{proposition} \label{ApqReform} $W$ is an A${}_{p, q}$ weight if and only if the averaging operators \begin{equation*} \V{f} \mapsto \frac{{\color{red} \chi_Q}}{|Q|^{1 - \frac{\alpha}{d}}} \int_Q \V{f}(x) \, dx \end{equation*}  are uniformly bounded from $L^p(W^\frac{p}{q}) $ to $L^q(W)$. \end{proposition}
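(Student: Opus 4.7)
My plan is to prove this by a direct computation of the operator norm of the averaging operator on a single cube, using the reducing operators $\W{V}_Q$ and $\W{V}_Q'$ and the identity \eqref{Apqcond}.

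Fix a cube $Q$ and set $T_Q\V{f} = |Q|^{\alpha/d - 1} \, 1_Q \int_Q \V{f}(y)\,dy$. Since the output is a constant vector $\V{e}:=\int_Q \V{f}$ times $|Q|^{\alpha/d-1}1_Q$, I can rewrite
\[ \|T_Q\V{f}\|_{L^q(W)} = |Q|^{\alpha/d - 1}\Bigl(\int_Q |W^{1/q}(x)\V{e}|^q\,dx\Bigr)^{1/q} \approx |Q|^{\alpha/d-1+1/q}\,|\W{V}_Q'\V{e}|, \]
where the last step uses the defining property of $\W{V}_Q'$. A direct arithmetic with $1/q = 1/p - \alpha/d$ simplifies the exponent of $|Q|$ to $-1/p'$, so $\|T_Q\V{f}\|_{L^q(W)} \approx |Q|^{-1/p'}|\W{V}_Q'\V{e}|$.

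Next I would dualize to compute the operator norm. Writing $|\W{V}_Q'\V{e}| = \sup_{|\V{g}|=1}\langle \V{e}, \W{V}_Q'\V{g}\rangle = \sup_{|\V{g}|=1}\int_Q \langle W^{1/q}(y)\V{f}(y),\, W^{-1/q}(y)\W{V}_Q'\V{g}\rangle\,dy$ and applying H\"older in the second factor at exponent $p'$, I would get the upper bound
\[ |\W{V}_Q'\V{e}| \leq \|\V{f}\|_{L^p(W^{p/q})}\cdot |Q|^{1/p'}\sup_{|\V{g}|=1}\Bigl(\frac{1}{|Q|}\int_Q |W^{-1/q}(y)\W{V}_Q'\V{g}|^{p'}dy\Bigr)^{1/p'}. \]
The inner $L^{p'}$-average is $\approx |\W{V}_Q\W{V}_Q'\V{g}|$ by the defining property of $\W{V}_Q$, so $\|T_Q\|_{L^p(W^{p/q})\to L^q(W)}\lesssim \|\W{V}_Q\W{V}_Q'\|$. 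The reverse inequality is obtained by saturating the same chain: pick $\V{g}$ realizing the supremum in $\|\W{V}_Q\W{V}_Q'\|$ and then an extremal $\V{f}$ for the resulting $L^p$--$L^{p'}$ pairing on $Q$ (concretely, $\V{f}(y) = W^{-1/q}(y)\phi(y)\W{V}_Q'\V{g}\cdot 1_Q$ with $\phi$ chosen from the extremizing sequence for H\"older at exponent $p'$). This gives $\|T_Q\|_{L^p(W^{p/q})\to L^q(W)} \approx \|\W{V}_Q\W{V}_Q'\|$.

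The proposition then follows immediately from \eqref{Apqcond}, which states $\sup_Q\|\W{V}_Q\W{V}_Q'\|^q \approx [W]_{A_{p,q}}$: uniform boundedness of the $T_Q$ over all cubes is equivalent to $\sup_Q\|\W{V}_Q\W{V}_Q'\|<\infty$, which is equivalent to $[W]_{A_{p,q}}<\infty$. The only genuinely delicate point, though entirely routine, is bookkeeping the reducing-operator equivalences up to the dimensional factor $\sqrt{n}$ from the excerpt so that the two-sided comparison $\|T_Q\| \approx \|\W{V}_Q\W{V}_Q'\|$ is quantitative; there is no real obstacle beyond that.
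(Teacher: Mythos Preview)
Your approach is correct and essentially the same as the paper's: both compute the operator norm of $T_Q$ in terms of the reducing operators, obtaining $\|T_Q\|_{L^p(W^{p/q})\to L^q(W)} \approx \|\W{V}_Q\W{V}_Q'\|$, and then invoke \eqref{Apqcond}. The paper packages the duality via the abstract dual norm $\rho_{q,Q}^*$ and the identification $(L^p(W^{p/q}))^* = L^{p'}(W^{-p'/q})$, while you unpack the same duality by writing out the pairing and applying H\"older directly; these are two phrasings of one computation.

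One small correction to your explicit extremizer for the lower bound: taking $\V{f}(y)=W^{-1/q}(y)\phi(y)\W{V}_Q'\V{g}\cdot 1_Q$ gives $W^{1/q}(y)\V{f}(y)=\phi(y)\W{V}_Q'\V{g}$, which points in a \emph{constant} direction and so will not in general saturate the Cauchy--Schwarz step in $\langle W^{1/q}\V{f},\,W^{-1/q}\W{V}_Q'\V{g}\rangle$, since $W^{-1/q}(y)\W{V}_Q'\V{g}$ varies with $y$. The correct choice is
\[
\V{f}(y)=|W^{-1/q}(y)\W{V}_Q'\V{g}|^{\,p'-2}\,W^{-2/q}(y)\W{V}_Q'\V{g}\cdot 1_Q,
\]
so that $W^{1/q}\V{f}$ is the H\"older conjugate of $W^{-1/q}\W{V}_Q'\V{g}$ pointwise; with this $\V{f}$ the chain becomes an equality up to the reducing-operator constants and yields $\|T_Q\|\gtrsim \|\W{V}_Q\W{V}_Q'\|$ as you claim. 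The strategy itself is fine.
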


\begin{proof} The proof is similar to Proposition $2.1$ in \cites{G}.  In particular, since $L^p(W^{\frac{p}{q}}) $ is the dual space of $L^{p'} (W^{-\frac{p'}{q}})$ under the usual unweighted pairing \begin{equation*} L_{\V{g}} (\V{f}) = \ip{\V{f}}{\V{g}}_{L^2(\Rd; \Cn)} \end{equation*} for $\V{g} \in L^p(W^\frac{p}{q})$, we have that
\begin{align*} \lefteqn{\sup_{\|\V{f}\|_{L^p (W^\frac{p}{q})} = 1} \left\|\frac{{\color{red} \chi_Q}}{|Q|^{1 - \frac{\alpha}{d}}} \int_Q \V{f} (x)\, dx  \right\|_{L^q(W)}   = \sup_{\|\V{f}\|_{L^p (W^\frac{p}{q})} = 1} |Q|^{-\frac{1}{p'}} \rho_{q, Q}\left( \int_Q \V{f} \, dx \right)}  \\
&\hspace{5cm}  = \sup_{\|\V{f}\|_{L^p (W^\frac{p}{q})} = 1} \sup_{\V{e} \in \Cn} |Q|^{-\frac{1}{p'}} \frac{ \left|\int_Q \ip{\V{f}(x)}{\V{e}}_{\Cn} \, dx \right| }{ (\rho_{q, Q})^* (\V{e})}  \\ &\hspace{5cm}= \sup_{\V{e} \in \Cn} |Q|^{-\frac{1}{p'}} \frac{\|{\color{red} \chi_Q} \V{e} \|_{L^{p'} (W^{-\frac{p'}{q}})}}{ (\rho_{q, Q})^* (\V{e})}
\end{align*}
and the last term here being uniformly finite (with respect to all cubes $Q$) is easily seen to be equivalent to $W$ being an A${}_{p, q}$ weight by replacing $\vec{e}$ with $\W{V}_Q ' \vec{e}$.  \end{proof}
\subsection{The fractional maximal operator}
Recall that the natural definition of the maximal operator on matrix weighted spaces is given by
\begin{equation*} M_{W, \alpha}  \V{f} (x) = \sup_{Q \ni x } \frac{1}{|Q|^{1 - \frac{\alpha}{d}}} \int_Q |W^\frac{1}{q} (x) W^{-\frac{1}{q}} (y) \V{f} (y)| \, dy. \end{equation*}  We will also need the following auxiliary fractional maximal operator:
\begin{equation*} M_{W, \alpha}  ' \V{f} (x) = \sup_{Q \ni x } \frac{1}{|Q|^{1 - \frac{\alpha}{d}}}  \int_Q |\W{V}_Q ^{-1} W^{-\frac{1}{q}} (y) \V{f} (y)| \, dy. \end{equation*}

\begin{corollary} If $M_{W, \alpha}  : L^p \rightarrow L^q$ boundedly then $W$ is a matrix A${}_{p, q}$ weight. \end{corollary}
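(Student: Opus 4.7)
The plan is to reduce the matrix $A_{p,q}$ condition to the averaging-operator reformulation provided by Proposition \ref{ApqReform}, and then to dominate those averaging operators pointwise by $M_{W,\alpha}$ applied to a suitably chosen test function. Thus the goal becomes: given an arbitrary $\vec{g} \in L^p(W^{p/q})$ and an arbitrary cube $Q$, I want to show
\begin{equation*}
\left\|\frac{1_Q}{|Q|^{1-\alpha/d}}\int_Q \vec{g}(y)\,dy\right\|_{L^q(W)} \lesssim \|\vec{g}\|_{L^p(W^{p/q})},
\end{equation*}
with a constant independent of $Q$.

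The key observation is the right substitution. Setting $\vec{f}(y)=W^{1/q}(y)\vec{g}(y)$, a direct unwinding of the two norm definitions gives the isometric identity $\|\vec{f}\|_{L^p} = \|\vec{g}\|_{L^p(W^{p/q})}$, because $(W^{p/q})^{1/p}=W^{1/q}$. So I have transported the weighted $L^p$ hypothesis to an unweighted $L^p$ hypothesis, which is exactly the kind of input the operator $M_{W,\alpha}$ acts on.

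Next, for $x \in Q$, I apply the triangle inequality inside $\mathbb{C}^n$ to pull the constant matrix $W^{1/q}(x)$ inside the integral, and then insert $W^{-1/q}(y)W^{1/q}(y)=I$ to recognize the integrand of $M_{W,\alpha}\vec{f}$:
\begin{equation*}
\left|W^{1/q}(x)\,\frac{1}{|Q|^{1-\alpha/d}}\int_Q \vec{g}(y)\,dy\right| \leq \frac{1}{|Q|^{1-\alpha/d}}\int_Q |W^{1/q}(x)W^{-1/q}(y)\vec{f}(y)|\,dy \leq M_{W,\alpha}\vec{f}(x).
\end{equation*}
Multiplying both sides by $1_Q(x)$, taking the $L^q$ norm, and invoking the assumed boundedness $M_{W,\alpha}:L^p\to L^q$ yields the required averaging-operator bound.

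Finally, applying Proposition \ref{ApqReform} to the uniform-in-$Q$ estimate obtained above concludes that $W$ is a matrix $A_{p,q}$ weight. There is no real obstacle here beyond noticing the correct change of variables $\vec{f}=W^{1/q}\vec{g}$; the rest is the pointwise triangle-inequality domination, which is the standard necessity-of-$A_p$ argument adapted to the matrix fractional setting.
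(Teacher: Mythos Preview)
Your proof is correct and follows essentially the same approach as the paper's own argument: both dominate the averaging operator pointwise by $M_{W,\alpha}$ applied to $W^{1/q}\vec{g}$ (your $\vec{f}$), then invoke the assumed $L^p\to L^q$ boundedness and Proposition~\ref{ApqReform}. The only difference is notational—the paper writes $M_{W,\alpha}(W^{1/q}\vec f)$ directly rather than first introducing the substitution.
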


\begin{proof} For each cube $Q$ containing $x$ we have \begin{align*} \left|\frac{{\color{red} \chi_Q} (x)}{|Q|^{1 - \frac{\alpha}{d}}} \int_Q W^{\frac{1}{q}} (x) \V{f}(y) \, dy \right| & \leq \frac{{\color{red} \chi_Q} (x)}{|Q|^{1 - \frac{\alpha}{d}}} \int_Q |W^{\frac{1}{q}} (x) \V{f}(y)| \, dy \\ & \leq M_{W, \alpha} (W^\frac{1}{q} \V{f}) \end{align*} so that
\begin{align*} \sup_Q \left\|\frac{{\color{red} \chi_Q}}{|Q|^{1 - \frac{\alpha}{d}}} \int_Q \V{f}(y) \, dy  \right\|_{L^q(W)} &\leq  \left\|M_{W, \alpha} (W^\frac{1}{q} \V{f}) \right\|_{L^q} \\ & \lesssim  \|\V{f}\|_{L^p (W^\frac{p}{q})}. \end{align*} \end{proof}

\begin{corollary} If $W$ is a matrix A${}_{p, q}$ weight then for any unit vector $\V{e}$ we have that $|W^\frac{1}{q} \V{e}|^q$ is a scalar A${}_{p, q}$ weight with A${}_{p, q}$ characteristic $\lesssim \Apq{W}$. \end{corollary}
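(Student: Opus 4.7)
The plan is to verify the scalar $A_{p,q}$ condition for $w(x) := |W^{\frac{1}{q}}(x)\vec{e}|^q$ directly against the matrix definition, exploiting submultiplicativity of the operator norm. Since $w^{-p'/q} = |W^{\frac{1}{q}}(x)\vec{e}|^{-p'}$, the quantity one must bound uniformly in $Q$ is
\begin{equation*}
\left(\frac{1}{|Q|}\int_Q |W^\frac{1}{q}(x)\vec{e}|^q\,dx\right)\left(\frac{1}{|Q|}\int_Q |W^\frac{1}{q}(x)\vec{e}|^{-p'}\,dx\right)^{q/p'},
\end{equation*}
and my goal is to dominate it by $\Apq{W}$.

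The single ingredient I will use is the ``insert an identity'' pointwise bound
\begin{equation*}
|W^\frac{1}{q}(x)\vec{e}| = |W^\frac{1}{q}(x) W^{-\frac{1}{q}}(y) W^\frac{1}{q}(y)\vec{e}| \leq \|W^\frac{1}{q}(x) W^{-\frac{1}{q}}(y)\|\cdot|W^\frac{1}{q}(y)\vec{e}|,
\end{equation*}
valid for all $x, y$. Rearranging to $|W^{\frac{1}{q}}(x)\vec{e}|/|W^{\frac{1}{q}}(y)\vec{e}| \leq \|W^{\frac{1}{q}}(x) W^{-\frac{1}{q}}(y)\|$, raising to the power $p'$, and averaging in $y$ over $Q$ yields
\begin{equation*}
|W^\frac{1}{q}(x)\vec{e}|^{p'} \cdot \frac{1}{|Q|}\int_Q |W^\frac{1}{q}(y)\vec{e}|^{-p'}\,dy \leq \frac{1}{|Q|}\int_Q \|W^\frac{1}{q}(x) W^{-\frac{1}{q}}(y)\|^{p'}\,dy.
\end{equation*}

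Next I would raise both sides to $q/p'$, integrate in $x\in Q$, and divide by $|Q|$. The left-hand side factors as the target product above, while the right-hand side is exactly the integrand in the definition of $\Apq{W}$; taking the supremum over $Q$ therefore gives $[w]_{A_{p,q}} \leq \Apq{W}$, which is in fact sharper than the $\lesssim$ claimed.

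There is no serious obstacle. The only technicality is verifying that $w$ is genuinely a weight (measurable, a.e.\ positive, locally integrable), which is immediate from the assumption that $W$ has locally integrable entries and is a.e.\ positive definite, so that $W^{\frac{1}{q}}$ is well-defined a.e.\ via functional calculus. Note that unit-ness of $\vec{e}$ is never used except for $\vec{e}\neq 0$, since the key inequality is homogeneous in $\vec{e}$.
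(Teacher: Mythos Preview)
Your proof is correct, and in fact yields the clean inequality $[w]_{A_{p,q}} \le \Apq{W}$ without any implicit constant.

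Your route is genuinely different from the paper's. The paper proceeds indirectly via Proposition \ref{ApqReform}: since $W \in A_{p,q}$, the averaging operators $\V{f} \mapsto |Q|^{-(1-\alpha/d)} 1_Q \int_Q \V{f}$ are uniformly bounded $L^p(W^{p/q}) \to L^q(W)$; specializing to $\V{f} = \phi\,\V{e}$ with scalar $\phi$ shows these operators are bounded $L^p(|W^{1/q}\V{e}|^p) \to L^q(|W^{1/q}\V{e}|^q)$, and then the scalar case of the same proposition recovers the scalar $A_{p,q}$ condition. This costs the dimensional constants hidden in the reducing operator equivalences used in the proof of Proposition \ref{ApqReform}. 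Your direct ``insert $W^{-1/q}(y)W^{1/q}(y)$'' argument bypasses all of that machinery and is both shorter and sharper. The paper's approach, on the other hand, has the conceptual advantage of making transparent the heuristic that scalar $A_{p,q}$ arises from the matrix condition by restricting to rank-one inputs.
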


\begin{proof} Let $\phi$ be any scalar function and let $\V{f} = \phi \V{e}$.  By Proposition \ref{ApqReform}, we have that
\begin{equation*} \phi \mapsto \frac{{\color{red} \chi_Q}}{|Q|^{1 - \frac{\alpha}{d}}} \int_Q \phi(x)  \, dx \end{equation*}  are uniformly bounded from the scalar weighted space $L^p(|W^\frac{1}{q} \V{e}|^p) $ to the scalar weighted space $L^q(|W^\frac{1}{q} \V{e}|^q)$.  But Proposition \ref{ApqReform} again in scalar setting then gives us that $|W^\frac{1}{q} \V{e}|^q$ is a scalar A${}_{p, q}$ weight with A${}_{p, q}$ characteristic $\lesssim \Apq{W}$. \end{proof}

\begin{proposition} \label{ApqDual} $W$ is an A${}_{p, q}$ weight if and only if $W^{-\frac{p'}{q}}$ is an A${}_{q', p'}$ weight, and in particular \begin{equation*} \|W^{-\frac{p'}{q}} \|_{\text{A}_{q', p'}} \approx \|W\|_{\text{A}_{p, q}} ^\frac{p'}{q} \end{equation*} \end{proposition}

\begin{proof} {\color{red}The proof simply involves expanding out $\|\tilde{V}_Q \tilde{V}_Q ' \|^{p'}$ using reducing operators and noticing that \begin{equation*} \frac{1}{p'} = \frac{1}{q'} - \frac{\alpha}{d}. \end{equation*}  We leave the simple linear algebra details to the interested reader.}  \end{proof}

\begin{remark} Let $r = 1 + \frac{q}{p'}$.  These two corollaries also imply that the A${}_r$ characteristic of each $|W^{-\frac{1}{q}} \V{e}|^{p'}$ is bounded by $\Apq{W}^{r' - 1} $.

Furthermore, it is easy to see that $w$ is a scalar A${}_{p, q}$ weight if and only if $w$ is a scalar A${}_r$ weight.  In the matrix case, however, there is no reason to believe that this is true.  In particular, $W$ is a matrix A${}_r$ weight precisely when  \begin{equation*} \sup_{Q} \, \frac{1}{|Q|} \int_Q \left(\frac{1}{|Q|} \int_Q \|W^{\frac{p'}{p' + q}} (x) W^{-\frac{p'}{p' + q}}(y) \|^{\frac{p' + q}{q}} \, dy \right)^\frac{q}{p'} \, dx < \infty \end{equation*} which is unlikely to imply, or be implied by \eqref{Apqcond}.
\end{remark}

\begin{lemma} \label{IntMaxEst} If  $W$ is an A${}_{p, q}$ weight  then $\|M' _{W, \alpha} \|_{L^p \rightarrow L^q}^q  \lesssim [W]_{\text{A}_{p, q}} ^{r' - 1}$ \end{lemma}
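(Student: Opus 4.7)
My plan combines three ingredients: a dyadic reduction via Proposition \ref{dyadic}; a Calder\'on--Zygmund principal-cube decomposition adapted to $M'_{W,\alpha}$; and a reduction from the matrix weight to finitely many scalar $A_r$ weights via an $\epsilon$-net on the unit sphere of $\mathbb{C}^n$.

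First I would apply Proposition \ref{dyadic} to replace arbitrary cubes by cubes from a fixed dyadic grid $\D$, at the expense of a bounded dimensional factor. Writing
$$a_Q[\V{f}] := \frac{1}{|Q|^{1-\alpha/d}}\int_Q |\W{V}_Q^{-1} W^{-1/q}(y)\V{f}(y)|\,dy,$$
the corresponding dyadic operator is $M'^{\D}_{W,\alpha}\V{f}(x) = \sup_{Q\in\D,\, Q\ni x} a_Q[\V{f}]$. For compactly supported $\V{f}\in L^p$ (density then extending), the standard level-set slicing combined with the disjoint family $\mathcal{Q}_k$ of maximal dyadic cubes with $a_Q[\V{f}] > 2^k$ and the associated principal-cube sets $E(Q) := Q \setminus \bigcup_{Q'\in\mathcal{Q}_{k+1},\, Q'\subset Q} Q'$ yields the sparse-type bound
$$\|M'^{\D}_{W,\alpha}\V{f}\|_{L^q}^q \lesssim \sum_{Q\in \mathcal{S}} a_Q[\V{f}]^q\, |E(Q)|,\qquad \mathcal{S} := \bigcup_k \mathcal{Q}_k.$$

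Next, for each $Q\in\mathcal{S}$ I would quantize the direction of the matrix-valued integrand by an $\epsilon$-net $\{\vec{e}_j\}_{j=1}^{N}$ of the unit sphere of $\mathbb{C}^n$ (with $N = N(n)$), so that
$$|\W{V}_Q^{-1}W^{-1/q}(y)\V{f}(y)| \lesssim \sum_{j=1}^N |\V{f}(y)|\cdot |W^{-1/q}(y)\W{V}_Q^{-1}\vec{e}_j|$$
after Cauchy--Schwarz on each dualized inner product. Normalizing $\vec{v}_{Q,j} := \W{V}_Q^{-1}\vec{e}_j/|\W{V}_Q^{-1}\vec{e}_j|$, the scalar weight $w_{Q,j}(y) := |W^{-1/q}(y)\vec{v}_{Q,j}|^{p'}$ is a scalar $A_r$ weight with $[w_{Q,j}]_{A_r} \lesssim [W]_{A_{p,q}}^{r'-1}$ uniformly in $\vec{v}_{Q,j}$ by the Remark preceding this lemma, while the reducing-operator property gives the normalization $|\W{V}_Q^{-1}\vec{e}_j|^{p'}\cdot \frac{1}{|Q|}\int_Q w_{Q,j}\,dy \approx 1$. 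Substituting this decomposition into the sparse bound and invoking the sharp scalar estimate \eqref{Malsharp} for the scalar fractional maximal operator against a weight of $A_r$-characteristic $\lesssim [W]_{A_{p,q}}^{r'-1}$, then summing over the $N$ directions and the finitely many translated dyadic grids, will produce $\|M'_{W,\alpha}\|_{L^p\to L^q}^q \lesssim [W]_{A_{p,q}}^{r'-1}$.

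The step I expect to be the main obstacle is the sparseness $|E(Q)|\geq \gamma|Q|$ for $Q\in\mathcal{S}$: the usual Calder\'on--Zygmund doubling of $a_Q$ between a cube and its dyadic parent may fail here since $\W{V}_Q^{-1}$ can blow up as $Q$ shrinks, so the $\gamma$-bound must be derived from the matrix $A_{p,q}$ condition itself, possibly at the cost of an additional polynomial factor in $[W]_{A_{p,q}}$ which is absorbed harmlessly into the final exponent $r'-1$.
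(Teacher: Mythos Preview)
Your approach has two genuine gaps, and it also misses a much simpler route.

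\textbf{First gap: the sparseness.} You correctly flag that the usual doubling argument fails because $\W{V}_Q^{-1}$ varies with $Q$. In the paper this exact difficulty is handled later (Lemma~\ref{StoppingTimeLemma}), and the resolution forces the stopping threshold $a$ to be a positive power of the $A_p$ characteristic. That extra factor is \emph{not} harmlessly absorbed: it would add a fixed positive power of $[W]_{A_{p,q}}$ to the exponent, so you would end up proving $\|M'_{W,\alpha}\|_{L^p\to L^q}^q \lesssim [W]_{A_{p,q}}^{r'-1+c}$ for some $c>0$, which is strictly weaker than the lemma.

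\textbf{Second gap: the scalar reduction.} After your $\epsilon$-net step the direction $\vec v_{Q,j}=\W V_Q^{-1}\vec e_j/|\W V_Q^{-1}\vec e_j|$ depends on $Q$, so the scalar weight $w_{Q,j}=|W^{-1/q}(\cdot)\vec v_{Q,j}|^{p'}$ is a \emph{different} $A_r$ weight on each cube of $\mathcal S$. The sharp scalar bound \eqref{Malsharp} is a global statement for a single fixed weight; it does not let you sum a sparse form in which the weight changes from cube to cube. There is no mechanism in your outline to pass from ``uniformly bounded $A_r$ characteristic over all directions'' to a single application of \eqref{Malsharp}.

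\textbf{What the paper actually does} is far more elementary and avoids both issues. Since each $|W^{-1/q}(\cdot)\vec e|^{p'}$ lies in scalar $A_r$ with characteristic $\lesssim [W]_{A_{p,q}}^{r'-1}$, the scalar reverse H\"older inequality gives an $\epsilon\approx [W]_{A_{p,q}}^{1-r'}$ with
\[
\left(\frac{1}{|Q|}\int_Q \|W^{-1/q}(y)\W V_Q^{-1}\|^{(p-\epsilon)'}\,dy\right)^{1/(p-\epsilon)'}\lesssim 1.
\]
H\"older with exponents $(p-\epsilon)$ and $(p-\epsilon)'$ then yields the pointwise bound
\[
M'_{W,\alpha}\vec f(x)\ \lesssim\ \|\vec f\|_{L^p}^{1-p/q}\,\big(M(|\vec f|^{p-\epsilon})(x)\big)^{1/(p-\epsilon)},
\]
and the standard unweighted bound $\|M\|_{L^{p/(p-\epsilon)}\to L^{p/(p-\epsilon)}}\lesssim \epsilon^{-1}$ gives exactly $[W]_{A_{p,q}}^{r'-1}$. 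No sparse decomposition, no net, no scalar weighted fractional estimate is needed.
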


\begin{proof} By the scalar reverse H\"{o}lder inequality for A${}_\infty$ weights and the above remark, we can pick $\epsilon \approx \Apq{W}^{1 - r' }$ where \begin{equation*} \left(\frac{1}{|Q|} \int_Q |W^{-\frac{1}{q}} (x) \V{e}|^\frac{p - \epsilon}{p - \epsilon - 1} \, dx \right)^\frac{p - \epsilon - 1}{p - \epsilon} \lesssim \left(\frac{1}{|Q|} \int_Q |W^{-\frac{1}{q}} (x) \V{e}|^{p'} \, dx \right)^\frac{1}{p'}. \end{equation*}

Let $\{\V{e}_i\}_{i = 1}^n$ be any orthonormal basis of $\Cn$ and for any fixed $y \in \Rd$ let $Q$ be a cube that contains $y$.  Then by H\"{o}lder's inequality we have that
\begin{align*} \frac{1}{|Q|^{1 - \frac{\alpha}{d}}} & \int_Q |\W{V}_Q ^{-1} W^{-\frac{1}{q}} (x) \V{f} (x)| \, dx \\ & \leq |Q|^\frac{\alpha}{d}  \left(\frac{1}{|Q|} \int_Q \|W^{-\frac{1}{q}} (x) \W{V}_Q ^{-1}\|^\frac{p - \epsilon}{p - \epsilon - 1} \, dx \right)^\frac{p - \epsilon - 1}{p - \epsilon} \left(\frac{1}{|Q|} \int_Q |\V{f}(x)| ^{p - \epsilon} \, dx \right)^\frac{1}{p - \epsilon}. \end{align*}

 By the reverse H\"{o}lder inequality, we have \begin{align} {\color{red}\label{EqRHI}}\lefteqn{\left(\frac{1}{|Q|} \int_Q \|W^{-\frac{1}{q}} (x)  \W{V}_Q ^{-1}\|^\frac{p - \epsilon}{p - \epsilon - 1} \, dx \right)^\frac{p - \epsilon - 1}{p - \epsilon}}\\
 &\qquad \approx \sum_{i = 1}^n \left(\frac{1}{|Q|} \int_Q |W^{-\frac{1}{q}} (x) \W{V}_Q ^{-1} \V{e}_i|^\frac{p - \epsilon}{p - \epsilon - 1} \, dx \right)^\frac{p - \epsilon - 1}{p - \epsilon} \nonumber \\ &\qquad \lesssim \sum_{i = 1}^n \left(\frac{1}{|Q|} \int_Q |W^{-\frac{1}{q}} (x) \W{V}_Q ^{-1} \V{e}_i|^{p'} \, dx \right)^\frac{1}{p'} \\ &\qquad \approx \sum_{i = 1}^n \| \W{V}_Q  \W{V}_Q ^{-1}\| \lesssim 1 \nonumber. \end{align}

Thus, if $M$ is the Hardy-Littlewood maximal operator then an application of H\"{o}lder's inequality gives us that \begin{align*} \lefteqn{\left(\frac{1}{|Q|^{1 - \frac{\alpha}{d}}}  \int_Q |\W{V}_Q ^{-1} W^{-\frac{1}{q}} (x) \V{f} (x)| \, dx \right)^q  \lesssim |Q|^\frac{q \alpha}{d} \left(\frac{1}{|Q|} \int_Q |\V{f}(x)| ^{p - \epsilon} \, dx \right)^\frac{q}{p - \epsilon} }\\ &\qquad = |Q|^\frac{q \alpha}{d} \left(\frac{1}{|Q|} \int_Q |\V{f}(x)| ^{p - \epsilon} \, dx \right)^\frac{q - p}{p - \epsilon} \left(\frac{1}{|Q|} \int_Q |\V{f}(x)| ^{p - \epsilon} \, dx \right)^\frac{p}{p - \epsilon} \\ &\qquad \leq |Q|^\frac{q \alpha}{d} \left(\frac{1}{|Q|} \int_Q |\V{f}(x)| ^{p} \, dx \right)^\frac{q - p}{p
} \left(M(|\V{f}| ^{p - \epsilon}) (y)  \right)^\frac{p}{p - \epsilon} \\ &\qquad = \left(\inrd |\V{f}(x)| ^{p} \, dx \right)^\frac{q - p}{p
} \left(M(|\V{f}| ^{p - \epsilon}) (y)  \right)^\frac{p}{p - \epsilon} \end{align*} since $ \frac{q\alpha}{d} - \frac{q}{p} + 1  = 0$.  Thus, the standard $L^t$ bound for $M$ with $t>1$ gives us
\begin{align*} \inrd (M_{W, \alpha}  ' \V{f} (y))^q \, dx & \lesssim \| \V{f} \|_{L^p} ^{q - p} \|M (|\V{f}|^{p - \epsilon})\|_{L^\frac{p}{p - \epsilon}} ^{\frac{p - \epsilon}{p}} \\ &  \lesssim \epsilon ^{-1} \|\V{f} \|_{L^p} ^q.  \end{align*}  \end{proof}

\begin{lemma} \label{StopLem} Let $Q$ be a dyadic cube (in some fixed dyadic lattice) and \begin{equation*} N_Q (x) = \sup_{Q \supseteq R \ni x} \|W^\frac{1}{q} (x) \W{V}_R \| \end{equation*} where the supremum is taken over all dyadic cubes $R \subseteq Q$ containing $x$.  If $W$ is an A${}_{p, q}$ weight then we can pick $\delta \approx \Apq{W}^{-1}$ where \begin{equation*} \int_Q (N_Q (x) )^{q + \epsilon} \, dx \lesssim |Q| \Apq{W}   \end{equation*}  for all $0 \leq \epsilon < \delta$. \end{lemma}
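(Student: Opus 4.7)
My plan is to run a Calder\'on--Zygmund stopping-time argument on the reducing operators $\widetilde{V}_R$ to reduce the supremum defining $N_Q$ to a sum over a Carleson-sparse family, then upgrade the resulting $L^q$ bound to $L^{q+\epsilon}$ via the sharp reverse H\"older inequality for scalar $A_\infty$ weights supplied by the Corollary and Remark above. First I would fix a large $c_0$ depending only on $n$ and recursively build a family $\mathcal{S}\subset \D$ by declaring $Q\in \mathcal{S}$ and taking the $\mathcal{S}$-children of each $S\in \mathcal{S}$ to be the maximal dyadic $R\subsetneq S$ with $\|\widetilde{V}_R\widetilde{V}_S^{-1}\|>c_0$. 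For any dyadic $R\subseteq Q$ let $\pi(R)\in \mathcal{S}$ denote the smallest stopping cube containing $R$; by maximality $\|\widetilde{V}_R\widetilde{V}_{\pi(R)}^{-1}\|\leq c_0$. Since both $\widetilde{V}_R$ and $\widetilde{V}_{\pi(R)}^{-1}$ are self-adjoint, $\|\widetilde{V}_R\widetilde{V}_{\pi(R)}^{-1}\|=\|\widetilde{V}_{\pi(R)}^{-1}\widetilde{V}_R\|$, and factoring $W^{1/q}(x)\widetilde{V}_R=(W^{1/q}(x)\widetilde{V}_{\pi(R)})(\widetilde{V}_{\pi(R)}^{-1}\widetilde{V}_R)$ and using submultiplicativity of the operator norm gives $N_Q(x)\leq c_0\sup_{S\in \mathcal{S},\,S\ni x}\|W^{1/q}(x)\widetilde{V}_S\|$.

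The most delicate step is the Carleson packing estimate $\sum_{R\in \mathcal{S}(S)}|R|\leq \frac{1}{2}|S|$. I would cover the unit sphere of $\C^n$ with a finite $\frac{1}{2}$-net $\{\vec{e}_k\}_{k=1}^N$, $N=N(n)$; if $\|\widetilde{V}_R\widetilde{V}_S^{-1}\|>c_0$ and $\vec{e}_*$ is a unit vector achieving this norm, any $\vec{e}_k$ with $|\vec{e}_*-\vec{e}_k|<\frac{1}{2}$ satisfies $|\widetilde{V}_R\widetilde{V}_S^{-1}\vec{e}_k|>c_0/2$, and unfolding through the reducing operator inequalities this forces
\[\frac{1}{|R|}\int_R |W^{-1/q}(y)\widetilde{V}_S^{-1}\vec{e}_k|^{p'}\,dy \gtrsim c_0^{p'},\]
so $R$ lies in the scalar level set $\{y:M(\mathbf{1}_S|W^{-1/q}\widetilde{V}_S^{-1}\vec{e}_k|^{p'})(y)\gtrsim c_0^{p'}\}$. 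Since $\int_S |W^{-1/q}(y)\widetilde{V}_S^{-1}\vec{e}_k|^{p'}\,dy\leq |S|\,|\widetilde{V}_S\widetilde{V}_S^{-1}\vec{e}_k|^{p'}\leq |S|$, the weak $(1,1)$ bound for the Hardy--Littlewood maximal operator bounds the union of these level sets by $\lesssim N|S|/c_0^{p'}$; picking $c_0$ sufficiently large in $n$ gives the packing, and iterating across generations yields $\sum_{S\in \mathcal{S}}|S|\leq 2|Q|$.

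To obtain the $L^{q+\epsilon}$ gain I would appeal to scalar reverse H\"older. The Corollary above shows that $|W^{1/q}\vec{e}|^q$ is scalar $A_{p,q}$ with constant $\lesssim [W]_{A_{p,q}}$ uniformly in the unit vector $\vec{e}$, hence scalar $A_\infty$ with the same control, so the sharp reverse H\"older inequality produces $\delta\approx [W]_{A_{p,q}}^{-1}$, uniform in $\vec{e}$, with
\[\left(\frac{1}{|S|}\int_S |W^{1/q}(x)\vec{e}|^{q(1+\delta)}\,dx\right)^{\frac{1}{1+\delta}} \lesssim \frac{1}{|S|}\int_S |W^{1/q}(x)\vec{e}|^q\,dx \approx |\widetilde{V}_S'\vec{e}|^q.\]
Specializing $\vec{e}=\widetilde{V}_S\vec{f}_i$ for a fixed orthonormal basis $\{\vec{f}_i\}_{i=1}^n$ of $\C^n$, using $\|A\|\lesssim \sqrt{n}\max_i|A\vec{f}_i|$, and observing that $[W]_{A_{p,q}}^\delta\lesssim 1$ for this $\delta$ yields $\int_S \|W^{1/q}(x)\widetilde{V}_S\|^{q(1+\delta)}\,dx\lesssim |S|\,\|\widetilde{V}_S'\widetilde{V}_S\|^{q(1+\delta)}\lesssim |S|\,[W]_{A_{p,q}}$.

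Finally, I would set $\epsilon=q\delta$ and replace the supremum in the first step by a sum, giving $N_Q(x)^{q+\epsilon}\lesssim \sum_{S\in \mathcal{S},\,S\ni x}\|W^{1/q}(x)\widetilde{V}_S\|^{q+\epsilon}$; integrating over $Q$ and combining the packing estimate with the reverse H\"older estimate then produces
\[\int_Q N_Q(x)^{q+\epsilon}\,dx \lesssim [W]_{A_{p,q}}\sum_{S\in \mathcal{S}}|S| \lesssim [W]_{A_{p,q}}|Q|,\]
as desired. The main obstacle will be the Carleson step: the matrix stopping condition has to be dualized to a finite collection of scalar weak-type estimates via the sphere-covering trick, and it is there that the implicit constant picks up its $n$-dependence.
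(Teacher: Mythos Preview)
Your proposal is correct and follows essentially the same strategy as the paper: a stopping time on the quantity $\|\widetilde{V}_R\widetilde{V}_S^{-1}\|$, a Carleson/packing estimate via the reducing-operator inequality $|R_j|\,|\widetilde{V}_{R_j}\widetilde{V}_S^{-1}\vec{e}|^{p'}\lesssim \int_{R_j}|W^{-1/q}\widetilde{V}_S^{-1}\vec{e}|^{p'}$, and the scalar reverse H\"older inequality for $|W^{1/q}\vec{e}|^q$ to get the exponent gain $q\mapsto q+\epsilon$ with $\epsilon\approx[W]_{A_{p,q}}^{-1}$.

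The only packaging difference is that you build the entire sparse family $\mathcal{S}$ at once and then crudely replace the supremum over $\mathcal{S}$ by the sum, whereas the paper runs the recursion explicitly: it truncates to $N_{Q,m}$ (restricting the supremum to $R$ with $\ell(R)>2^{-m}$) so that the iteration terminates after finitely many generations, obtains $\int_Q N_{Q,m}^{q+\epsilon}\leq 2\tilde{C}|Q|$, and then sends $m\to\infty$ by monotone convergence. Your version avoids the truncation by observing directly that $\sum_{S\in\mathcal{S}}|S|\leq 2|Q|$ and applying Tonelli for nonnegative series; this is a little slicker. Your sphere-net trick for the packing estimate is a harmless variant of the paper's summation over an orthonormal basis. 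Both proofs give the same dependence on $[W]_{A_{p,q}}$.
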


\begin{proof}

Let $Q$ be any dyadic cube and for $m \in \mathbb{N}$ let  \begin{equation*} N_{Q, m} (x) = \sup_{\substack{Q \supseteq R \ni x \\ \ell(R) > 2^{-m}}} \|W^\frac{1}{q} (x) \W{V}_R \| \end{equation*} where $\ell(R)$ denotes the side length of the cube $R$.

  Let $\{R_j\}$ be maximal dyadic subcubes of $Q$ satisfying \begin{equation*} \|\W{V}_Q ^{-1}  \W{V}_{R_j} \| > C \end{equation*} and $\ell(R_j) > 2^{-m}$ for some large $C > 1$ independent of $W$ to be determined.  Note that \begin{align*} C^{p'}   \sum_j |R_j| & \leq  \sum_j |R_j | \|\W{V}_Q ^{-1}  \W{V}_{R_j} \|^{p'}    \\ & \lesssim \sum_{i = 1}^n \sum_j  \int_{R_j} |W^{-\frac{1}{q}} (x) \W{V}_Q ^{-1}  \V{e}_i |^{p'} \, dx  \lesssim   |Q|  \end{align*} where $\ell(R)$ denotes the side length of $R$.  Thus for $C$ large enough independent of $W$ we have  $\sum_j |R_j| \leq \frac12 |Q|$ and each $R_j$ (if any even exist) satisfies $R_j \subsetneq Q$.

On the other hand if $x \in Q \backslash \cup_j R_j$ then for any dyadic cube $R \subseteq Q$ containing $x$ with $\ell(R) > 2^{-m}$ we have \begin{align*} \|W^\frac{1}{q} (x) \W{V}_R \| & \leq \|W^\frac{1}{q} (x) \W{V}_Q  \| \|  \W{V}_Q ^{-1}      \W{V}_R\|  \\ & \leq C \|W^\frac{1}{q} (x) \W{V}_Q  \|    \end{align*} so that \begin{align*}   \int_{Q \backslash \cup_j R_j} (N_{Q, m}(x))^{q + \epsilon} \, dx   & \leq C^{q + \epsilon}  \int_{Q} \|W^{\frac{1}{q}} (x) \W{V}_Q \|^{q + \epsilon} \, dx \\ & \leq C^{q + \epsilon}  \sum_{i  = 1}^n \int_{Q} |W^{\frac{1}{q}} (x) \W{V}_Q \V{e}_i |^{q + \epsilon} \, dx \\ & \leq C^{q + \epsilon}  \Apq{W} ^\frac{q + \epsilon}{q} |Q| \\ & \lesssim  C^{q + \epsilon}  \Apq{W} |Q| \end{align*} since $0 \leq \epsilon \lesssim \Apq{W}^{-1} \leq 1$.

If $x \in R_j$ and $N_{Q, m} (x) \neq   N_{{R_j, m}} (x)$ then by maximality and the arguments above we have $N_{Q, m} (x) \leq C \|W^\frac{1}{q} (x) \W{V}_Q  \|$.  So if $R = R_j$ and \begin{equation*} F_R = \{x \in R : N_{Q, m} (x) \neq   N_{R, m} (x)\} \end{equation*} then arguing as above gives us that \begin{equation*} \int_{F_R} (N_{Q, m}(x))^{q + \epsilon} \, dx \leq C^{q + \epsilon}  \Apq{W} |Q|. \end{equation*} Setting $D_1 = \{R_j\}, \tilde{C} = 2 C^{q + \epsilon}  \Apq{W},$ and combining what is above gives us that \begin{align} \int_Q  (N_{Q, m} (x) )^{q + \epsilon} \, dx  & = \left(\int_{Q \backslash \left(\bigcup_{R \in D_1} R\right)} + \sum_{R \in D_1} \int_{F_R}  + \sum_{R \in D_1} \int_{R \backslash F_R} \right) (N_{Q, m} (x) )^{q + \epsilon} \, dx \nonumber \\ & \leq \tilde{C} |Q| + \sum_{R \in D_1} \int_{R } (N_{R, m} (x) )^{q + \epsilon} \, dx \label{BaseCase}  \end{align} where $D_1$ is a (possibly empty) disjoint collection of dyadic subcubes strictly contained in $Q$ and satisfying \begin{equation*} \sum_{R \in D_1} |R|  \leq 2^{-1} |Q|. \end{equation*}

We now proceed inductively.  Clearly the Lemma is proved if $D_1 = \emptyset$.  Otherwise, for each $\tilde{R} \in D_1$ let $R = R_{\tilde{R}, j}$ be maximal dyadic subcubes of $\tilde{R}$ satisfying \begin{equation*} \|\W{V}_{\tilde{R}} ^{-1}  \W{V}_{R} \| > C \end{equation*} and $\ell(R) > 2^{-m}$. Furthermore, let $D_2 = \{R_{\tilde{R}, j} : \tilde{R} \in D_1\}$.  Then by \eqref{BaseCase} we have
\begin{align} \int_Q  (N_{Q, m} (x) )^{q + \epsilon} \, dx & \leq \tilde{C} |Q| + \sum_{R \in D_1} \int_{R } (N_{R, m} (x) )^{q + \epsilon} \, dx \nonumber  \\ & \leq \tilde{C} |Q| + \tilde{C} \sum_{R \in D_1} |R| + \sum_{R \in D_2} \int_{R } (N_{R, m} (x) )^{q + \epsilon} \, dx \nonumber \\ & \leq \tilde{C} |Q| + \frac{\tilde{C}}{2} |Q| + \sum_{R \in D_2} \int_{R } (N_{R, m} (x) )^{q + \epsilon} \, dx \label{NequalsTwo} \end{align} where \begin{equation*} \sum_{R \in D_2} |R|  \leq 2^{-2} |Q|\end{equation*}  and each dyadic cube of $D_2$ is a strict subset of a dyadic cube in $D_1$ (where again the lemma is proved if $D_2 = \emptyset$.)  Continuing like this, we obtain classes $\{D_k\}$ where each dyadic cube in $D_k$ is a strict subset of a dyadic cube in $D_{k - 1}$ and
\begin{equation*} \int_Q  (N_{Q, m} (x) )^{q + \epsilon} \, dx \leq (2 - 2^{-k}) \tilde{C}  |Q| + \sum_{R \in D_k} \int_{R } (N_{R, m} (x) )^{q + \epsilon} \, dx. \end{equation*} Let $M = \log_2 (\ell(Q))$ then by definition $D_{k}$ for $k \geq m + M$ is empty which gives us that \begin{equation*} \int_Q  (N_{Q, m} (x) )^{q + \epsilon} \, dx \leq (2 - 2^{-m - M}) \tilde{C}   |Q|. \end{equation*}   The monotone convergence theorem now completes the proof.


  \end{proof}

We are now ready to prove Theorem \ref{thm:max}

\begin{proof}[Proof of Theorem \ref{thm:max}] By Proposition \ref{dyadic} we may assume that the supremum defining $M_{W,\al}$ is over all cubes from a fixed dyadic grid $\mathscr D$.  For each $x \in \Rd$ let $R_x$ be dyadic cube containing $x$ such that \begin{align} \frac12 (M_{W, \alpha} \V{f})(x) &\leq \frac{1}{|R_x|^{1 - \frac{\alpha}{d}}} \int_{R_x}  |W^\frac{1}{q} (x) W^{-\frac{1}{q}} (y) \V{f}(y)| \, dy  \label{RxDef} \\ & \leq \|W^\frac{1}{q} (x) \W{V}_{R_x}\| \left(\frac{1}{|R_x|^{1 - \frac{\alpha}{d}}} \int_{R_x} |\W{V}_{R_x} ^{-1} W^{-\frac{1}{q}} (y) \V{f}(y)| \, dy \right). \nonumber \end{align}

 For $x \in \Rd$ pick $j \in \Z$ to satisfy   \begin{equation} 2^j \leq  \frac{1}{|R_x|^{1 - \frac{\alpha}{d}}} \int_{R_x} |\W{V}_{R_x} ^{-1} W^{-\frac{1}{q}} (y) \V{f}(y)| \, dy < 2^{j + 1} \label{SjDef} \end{equation} and let $\mathcal S_j$  be the collection of all cubes $R = R_x$ for all $x \in \Rd$ that are maximal with respect to (\ref{SjDef}) (note that H\"{o}lder's inequality implies that such a maximal cube exists).   Then, for every $x \in \Rd$ we have that $R_x \subseteq S \in \mathcal S_j$ for some $j = j_x \in \Z$ and $S \in \mathcal S_j$.  Then for such $S\in \mathcal S_j$ we have
\begin{align*}  (M_{W, \alpha} \V{f})(x)  & \leq 2 \|W^\frac{1}{q} (x) \W{V}_{R_x}\| \left(\frac{1}{|R_x|^{1 - \frac{\alpha}{d}}} \int_{R_x} |\W{V}_{R_x} ^{-1} W^{-\frac{1}{q}} (y) \V{f}(y)| \, dy \right) \nonumber  \\ & \leq 2 (2 ^{j + 1} )N_{S} (x)    \end{align*} so that finally the previous two lemmas give us that

\begin{align*} \inrd |M_{W, \alpha} \V{f} (x) |^q \, dx &\lesssim  \sum_{j \in \Z,\  S \in \mathcal S_j}  2^{qj} \int_S (N_S (x)) ^q \, dx \\ & \lesssim \Apq{W} \sum_{j \in \Z} 2 ^{qj} |\bigsqcup \mathcal S_j|  \\ & \leq \Apq{W} \sum_{j \in \Z} 2 ^{qj} |\{x : M_{W, \alpha} ' \V{f} (x) \geq 2^j\}|  \\ & \approx   \Apq{W} \|M_{W, \alpha} ' \V{f} \|_{L^q} ^q  \\ & \lesssim \Apq{W}^ {r'}\|\V{f}\|_{L^p} ^p \end{align*}  which completes the proof.
\end{proof}

We end our discussion of the fractional maximal function on matrix weighted spaces with an observation that operator $M'_{W,\al}$ defined over dyadic cubes is weak type $(p, q)$ for any matrix weight $W$.
\begin{proposition} If $W$ is any matrix weight then $M_{W, \alpha} ' $ is weak $(p, q)$ \end{proposition}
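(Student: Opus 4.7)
The plan is to run a standard Calderón--Zygmund stopping time decomposition adapted to the reducing operators $\W{V}_Q$, using the fact that the operator norm $\|W^{-1/q}(\cdot) \W{V}_Q^{-1}\|_{L^{p'}(Q)}$ is automatically controlled by $|Q|^{1/p'}$ by the very definition of $\W{V}_Q$, with no $A_{p,q}$ hypothesis needed.

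Fix $\lambda > 0$, and let $\{Q_j\}$ be the maximal dyadic cubes satisfying
\begin{equation*}
\frac{1}{|Q_j|^{1 - \alpha/d}} \int_{Q_j} |\W{V}_{Q_j}^{-1} W^{-1/q}(y) \V{f}(y)| \, dy > \lambda.
\end{equation*}
These cubes exist (and are finite in measure) since H\"older's inequality bounds the integrand by something tending to zero as the cube size grows, and by maximality they are pairwise disjoint. Because the supremum defining $M'_{W,\alpha}$ is over dyadic cubes, we have $\{M'_{W,\alpha}\V{f} > \lambda\} \subseteq \bigsqcup_j Q_j$.

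Next I would apply H\"older's inequality on each $Q_j$ to get
\begin{equation*}
\int_{Q_j} |\W{V}_{Q_j}^{-1} W^{-1/q}(y) \V{f}(y)| \, dy \leq \left(\int_{Q_j} \|W^{-1/q}(y)\W{V}_{Q_j}^{-1}\|^{p'} dy\right)^{1/p'} \left(\int_{Q_j} |\V{f}(y)|^p dy\right)^{1/p}.
\end{equation*}
Expanding the operator norm over an orthonormal basis $\{\V{e}_i\}$ and using the definition of $\W{V}_{Q_j}$ gives
\begin{equation*}
\left(\frac{1}{|Q_j|}\int_{Q_j} \|W^{-1/q}(y)\W{V}_{Q_j}^{-1}\|^{p'} dy\right)^{1/p'} \lesssim \sum_{i=1}^n |\W{V}_{Q_j}\W{V}_{Q_j}^{-1} \V{e}_i| \lesssim 1,
\end{equation*}
so the first factor is $\lesssim |Q_j|^{1/p'}$. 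Since $\tfrac{1}{p'} - 1 + \tfrac{\alpha}{d} = -\tfrac{1}{q}$, combining with the stopping inequality yields
\begin{equation*}
\lambda \lesssim |Q_j|^{-1/q}\left(\int_{Q_j} |\V{f}(y)|^p dy\right)^{1/p},
\qquad\text{i.e.,}\qquad |Q_j| \lesssim \lambda^{-q}\left(\int_{Q_j}|\V{f}|^p\right)^{q/p}.
\end{equation*}

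Finally, since $q \geq p$ (because $\alpha \geq 0$), the map $t \mapsto t^{q/p}$ is superadditive on $[0,\infty)$, and so summing over $j$ and using the pairwise disjointness of the $Q_j$ gives
\begin{equation*}
|\{M'_{W,\alpha}\V{f} > \lambda\}| \leq \sum_j |Q_j| \lesssim \lambda^{-q} \sum_j \left(\int_{Q_j}|\V{f}|^p\right)^{q/p} \leq \lambda^{-q}\|\V{f}\|_{L^p}^q.
\end{equation*}
This is precisely the weak $(p,q)$ bound. There is no real obstacle here; the only slightly delicate point is observing that the universal estimate $\|W^{-1/q}\W{V}_Q^{-1}\|_{L^{p'}(Q)} \lesssim |Q|^{1/p'}$ is free from the very definition of $\W{V}_Q$, which is what lets the argument bypass any $A_{p,q}$ assumption on $W$.
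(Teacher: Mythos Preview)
Your proof is correct and follows essentially the same argument as the paper: select maximal dyadic cubes where the average exceeds $\lambda$, apply H\"older's inequality on each, use the defining property of the reducing operator $\W{V}_{Q_j}$ to bound $\bigl(\tfrac{1}{|Q_j|}\int_{Q_j}\|W^{-1/q}\W{V}_{Q_j}^{-1}\|^{p'}\bigr)^{1/p'}\lesssim 1$, and then sum using $\sum_j a_j^{q/p}\le\bigl(\sum_j a_j\bigr)^{q/p}$ for $q\ge p$. The only cosmetic difference is that the paper writes out the exponent bookkeeping as $|Q_j|=|Q_j|^{q-q\alpha/d}/|Q_j|^{q-1-q\alpha/d}$ before applying H\"older, whereas you compute the same exponent identity $\tfrac{1}{p'}-1+\tfrac{\alpha}{d}=-\tfrac{1}{q}$ directly.
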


\begin{proof}
Let $\lambda > 0$ and pick maximal dyadic cubes $Q_j$ such that \begin{equation*} \frac{1}{|Q_j| ^{1 - \frac{\alpha}{d}}} \int_{Q_j} |\W{V}_{Q_j} ^{-1} W^{-\frac{1}{q}} (y) \V{f} (y) | \, dy > \lambda \end{equation*} so that  \begin{equation*}
\{x : M_{W, \alpha} ' \V{f} (x) > \lambda\} = \bigsqcup_j Q_j .\end{equation*}

However, by H\"{o}lder's inequality we have that \begin{align*} \sum_j |Q_j| & = \sum_j \frac{|Q_j| ^{q - q \frac{\alpha}{d}}}{|Q_j| ^  {q - 1 - q \frac{\alpha}{d} }}  \\ & \leq \frac{1}{\lambda^q} \sum_j \left( \frac{1}{|Q_j| ^  { 1-  \frac{1}{q} -  \frac{\alpha}{d} }}  \int_{Q_j} |\W{V}_{Q_j} ^{-1} W^{-\frac{1}{q}} (y) \V{f} (y) | \, dy \right)^q \\ & \leq \frac{1}{\lambda^q} \sum_j \left(\frac{1}{|Q_j|} \int_{Q_j} \|\W{V}_{Q_j }^{-1} W^{-\frac{1}{q}} (y)  \|^{p'} \, dy \right)^\frac{q}{p'} \left(\int_{Q_j} |\V{f} (y)|^p \, dy \right)^\frac{q}{p} \\ & \lesssim \frac{1}{\lambda^q} \left( \sum_j \int_{Q_j} |\V{f} (y)|^p \, dy \right)^\frac{q}{p} \\ & \leq \frac{1}{\lambda^q} \left(  \inrd |\V{f} (y)|^p \, dy \right)^\frac{q}{p} \end{align*} since $\frac{q}{p} > 1$. \end{proof}

Unfortunately, it is not clear whether this result can be used to sharpen any of the results in this paper with respect to the A${}_{p, q}$ characteristic.

\subsection{Fractional integral operators}

 Let $I_\alpha$ be the Riesz potential defined by \begin{equation*}
I_\alpha \V{f} (x) = \inrd \frac{\V{f}(y)}{|x - y|^{d - \alpha}} \, dy. \end{equation*}
We begin by approximating $I_\al$ by a dyadic operator.

\begin{lemma} \label{DyadicLem} Let $\D^t$ be collection of dyadic grids from Proposition \ref{dyadic} then\begin{multline*}  \left|\ip{W ^\frac{1}{q} I_\alpha W^{-\frac{1}{q}} \V{f}}{\V{g}}_{L^2} \right| \\ \lesssim \sum_{t \in \{0, \frac13\}^d} \sum_{Q \in \D^t} \frac{1}{|Q| ^{1 - \frac{\alpha}{d}}} \int_Q \int_Q \left|\ip{  W^{-\frac{1}{q}}(y) \V{f}(y)}{W ^\frac{1}{q} (x) \V{g}(x) } _{\Cn} \right| \, dx \, dy \end{multline*}  \end{lemma}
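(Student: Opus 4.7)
The strategy is to reduce everything to a pointwise kernel bound
\begin{equation*}
\frac{1}{|x-y|^{d-\alpha}} \lesssim \sum_{t \in \{0, \frac13\}^d} \sum_{\substack{Q \in \D^t \\ x, y \in Q}} \frac{1}{|Q|^{1 - \alpha/d}}, \qquad x \neq y,
\end{equation*}
and then apply Tonelli's theorem. Starting from
\begin{equation*}
\left|\ip{W^{1/q} I_\alpha W^{-1/q} \V{f}}{\V{g}}_{L^2}\right| \leq \inrd \inrd \frac{\left|\ip{W^{-1/q}(y)\V{f}(y)}{W^{1/q}(x)\V{g}(x)}_{\Cn}\right|}{|x-y|^{d-\alpha}} \, dx\,dy,
\end{equation*}
I would substitute the pointwise inequality, interchange sum and integrals, and observe that the factor $1_Q(x)1_Q(y)$ (implicit in the condition $x,y \in Q$) restricts the double integral to $Q \times Q$, producing exactly the right-hand side of the lemma.

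\textbf{The pointwise bound.} Fix $x \neq y$ and choose $k_0 \in \Z$ with $2^{k_0-1} < |x-y| \leq 2^{k_0}$. The one-third trick (the $d$-dimensional variant of Proposition \ref{dyadic} obtained by taking shifts in $\{0, \frac13\}^d$) produces some $t_\star$ and a cube $Q_\star \in \D^{t_\star}$ of side length at most $c\cdot 2^{k_0}$, for a universal constant $c$, containing both $x$ and $y$. Every ancestor of $Q_\star$ in $\D^{t_\star}$ then also contains both points, so summing $|Q|^{\alpha/d - 1} = \ell(Q)^{\alpha - d}$ along this nested chain yields
\begin{equation*}
\sum_{t \in \{0, \frac13\}^d} \sum_{\substack{Q \in \D^t \\ x, y \in Q}} \frac{1}{|Q|^{1 - \alpha/d}} \; \geq \; \sum_{j \geq 0} (c\,2^{k_0 + j})^{\alpha - d} \; \approx \; 2^{k_0(\alpha - d)} \; \approx \; \frac{1}{|x-y|^{d-\alpha}},
\end{equation*}
where convergence of the geometric series is ensured by $\alpha < d$.

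\textbf{Main technical point.} The only ingredient beyond routine manipulation is the one-third trick itself: for every pair $(x,y) \in \Rd \times \Rd$ at least one of the $2^d$ shifts $t \in \{0, \frac13\}^d$ produces a cube in $\D^t$ of side $\lesssim |x-y|$ containing both $x$ and $y$. This is handled coordinate by coordinate: in each direction $i$, at least one of the two translates $\{0, \frac13\}$ places $x_i$ and $y_i$ in a common one-dimensional dyadic interval of length comparable to $|x_i - y_i|$; tensorizing these $d$ choices produces the desired cube (this is essentially the content of Proposition \ref{dyadic} in the formulation of \cite{LN}). With the kernel bound established, the rest of the argument is purely formal, relying on positivity of both the integrand and the coefficients $|Q|^{\alpha/d - 1}$ to justify the Tonelli swap.
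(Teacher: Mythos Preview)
Your proof is correct and follows essentially the same approach as the paper: both arguments reduce to the pointwise domination of the Riesz kernel by a dyadic sum via the one-third trick, with the paper organizing this as an annular decomposition $2^{k-1}\le |x-y|_\infty<2^k$ and you stating it directly as a pointwise kernel inequality followed by a Tonelli swap. One small imprecision: in your coordinate-by-coordinate justification you want, in each direction, an interval of length comparable to $|x-y|_\infty$ (the same scale in every coordinate, so that the product is a cube in the grid), not $|x_i-y_i|$; but since you also invoke Proposition~\ref{dyadic} directly, the argument goes through.
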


\begin{proof}  The proof requires nothing new in the matrix setting: let $| \cdot |_\infty$ be the standard $L^\infty$ norm on $\Rd$ and let $Q(x, r)$ be the ball with center $x \in \Rd$ in this norm.  Then for each $k \in \Z$ there exists $t \in \{0, \frac13 \}^d $ and $Q_t \in \D^t$ such that $Q(x, 2^k) \subset Q_t$ and \begin{equation*} 2^{k + 1} = \ell(Q(x, 2^k)) \leq \ell(Q_t) = 12 \cdot 2^k. \end{equation*}    Thus, we have \begin{align*} \lefteqn{\left|\ip{W ^\frac{1}{q} I_\alpha W^{-\frac{1}{q}} \V{f}}{\V{g}}_{L^2} \right|}\\
& \leq \inrd \sum_{k \in \Z} \int_{2^{ k - 1} \leq |x - y|_\infty < 2^k}  \frac{\left|\ip{  W^{-\frac{1}{q}}(y) \V{f}(y)}{W ^\frac{1}{q} (x) \V{g}(x) } _{\Cn} \right|}{|x - y|^{d - \alpha}} \, dy \, dx \\ & \lesssim
\inrd \sum_{k \in \Z} \sum_{t \in \{0, \frac13\}^d} \sum_{\substack{Q \in \D^t \\ 2^{k - 1} \leq \ell(Q) < 2^k}} \frac{\chi_Q(x)}{|Q|^{1 - \frac{\alpha}{d}}} \int_Q \left|\ip{  W^{-\frac{1}{q}}(y) \V{f}(y)}{W ^\frac{1}{q} (x) \V{g}(x) } _{\Cn} \right| \, dy \\ & \lesssim \sum_{t \in \{0, \frac13\}^d} \sum_{Q \in \D^t} \frac{1}{|Q| ^{1 - \frac{\alpha}{d}}} \int_Q \int_Q \left|\ip{  W^{-\frac{1}{q}}(y) \V{f}(y)}{W ^\frac{1}{q} (x) \V{g}(x) } _{\Cn} \right| \, dx \, dy. \end{align*} \end{proof}

We are now ready to prove Theorem \ref{thm:frac}.

\begin{proof}[Proof of Theorem \ref{thm:frac}]

We will show that $$W^{\frac1q}I_\al W^{-\frac1q}:L^p\ra L^q$$ which is equivalent to the boundedness $$I_\al:L^p(W^{\frac{p}{q}})\ra L^q(W).$$  By the previous Lemma it is enough to estimate \begin{multline*} \left|\ip{W ^\frac{1}{q} I_\alpha W^{-\frac{1}{q}} \V{f}}{\V{g}}_{L^2} \right| \\
\lesssim \sum_{t \in \{0, \frac13\}^d} \sum_{Q \in \D^t} \frac{1}{|Q| ^{1 - \frac{\alpha}{d}}} \int_Q \int_Q \left|\ip{  W^{-\frac{1}{q}}(y) \V{f}(y)}{W ^\frac{1}{q} (x) \V{g}(x) } _{\Cn} \right| \, dx \, dy. \end{multline*}
By a standard approximation argument we will assume that $\vec{f}$ and $\vec{g}$ are bounded functions with compact support.  Let $\D$ be a fixed dyadic grid and pick any $a > 2^{\frac{d + 1}{p - 1}}$. As in the proof of Lemma \ref{IntMaxEst}, pick $\epsilon' \approx \|W\|_{\text{A}_{p, q}} ^{1-r'}$ and let $\MC{Q} ^k$ denote the collection \begin{equation*} \MC{Q} ^k = \{Q \in \D : a^k < \left( \frac{1}{|Q|} \int_Q | \, \V{f}(y) |^{p - \epsilon'} \, dy\right)^{\frac{1}{p - \epsilon'}}  \leq a^{k + 1}\}\end{equation*} and let $\MC{S}^k$ the collection of $Q\in \D$ that are maximal with respect to the inequality \begin{equation*}  \left(\frac{1}{|Q|} \int_Q | \, \V{f}(y) |^{p - \epsilon'} \, dy\right)^{\frac{1}{p - \epsilon'}} > a^k. \end{equation*}
Finally, set $\MC{S} = \bigcup_k \MC{S}^k$.  Since for a fixed grid $\D$ \begin{align*} \sum_{Q \in \D} & \frac{1}{|Q| ^{1 - \frac{\alpha}{d}}} \int_Q \int_Q \left|\ip{  W^{-\frac{1}{q}}(y) \V{f}(y)}{W ^\frac{1}{q} (x) \V{g}(x) } _{\Cn} \right| \, dx \, dy \\ &  \leq \sum_{Q \in \D} |Q| ^{ \frac{\alpha}{d}} \left(\frac{1}{|Q|} \int_Q  |  \W{V}_Q ^{-1}  W^{-\frac{1}{q}}(y) \V{f}(y)| \, dy \right)\left( \int_Q |  \W{V}_Q  W ^\frac{1}{q} (x) \V{g}(x)| \, dx \right) \end{align*} we can estimate
\begin{align*} \lefteqn{\sum_{Q \in \D}  \frac{1}{|Q| ^{1 - \frac{\alpha}{d}}} \int_Q \int_Q \left|\ip{  W^{-\frac{1}{q}}(y) \V{f}(y)}{W ^\frac{1}{q} (x) \V{g}(x) }_{\Cn} \right| \, dx \, dy}
\\ &  \leq \sum_k \sum_{Q \in \MC{Q}^k} |Q| ^{ \frac{\alpha}{d}} \left(\frac{1}{|Q|}\int_Q  |  \W{V}_Q ^{-1}  W^{-\frac{1}{q}}(y) \V{f}(y)| \, dy \right)\left(  \int_Q |\W{V}_Q  W ^\frac{1}{q} (x) \V{g}(x)| \, dx \right)
\\ & \leq \sum_k \sum_{Q \in \MC{Q}^k} |Q| ^{ \frac{\alpha}{d}}  \left( \frac{1}{|Q|} \int_Q \|W^{-\frac{1}{q}} (y) \W{V}_Q ^{-1}\|^\frac{p - \epsilon'}{p - \epsilon' - 1} \, dy \right)^\frac{p - \epsilon' - 1}{p - \epsilon'} \\ & \qquad \times \left( \frac{1}{|Q|}\int_Q |\V{f}(y)| ^{p - \epsilon'} \, dy \right)^\frac{1}{p - \epsilon'}\left(  \int_Q |\W{V}_Q  W ^\frac{1}{q} (x) \V{g}(x)| \, dx \right)
\\ & \lesssim \sum_k \sum_{Q \in \MC{Q}^k} |Q| ^{ \frac{\alpha}{d}}   \left( \frac{1}{|Q|}\int_Q |\V{f}(y)| ^{p - \epsilon'} \, dy \right)^\frac{1}{p - \epsilon'}\left(  \int_Q |\W{V}_Q  W ^\frac{1}{q} (x) \V{g}(x)| \, dx \right)
\\ & \leq  \sum_k  a^{k + 1} \sum_{Q \in \MC{Q}^k} |Q| ^{ \frac{\alpha}{d}} \int_Q |\W{V}_Q  W ^\frac{1}{q} (x) \V{g}(x)| \, dx \\ & = \sum_k  a^{k + 1} \sum_{P \in \MC{S}^k} \sum_{\substack{Q \in \MC{Q}^k \\ Q \subset P}} |Q| ^{ \frac{\alpha}{d}}  \int_Q |\W{V}_Q  W ^\frac{1}{q} (x) \V{g}(x)| \, dx \end{align*} where in the third inequality we used {\color{red}\eqref{EqRHI}.}

We now examine the inner most sum:
\begin{align*} \lefteqn{\sum_{\substack{Q \in \MC{Q}^k \\ Q \subset P}} |Q| ^{ \frac{\alpha}{d}} \int_Q |\W{V}_Q  W ^\frac{1}{q} (x) \V{g}(x)| \, dx}
\\  & \qquad\qquad  \leq \sum_{\substack{Q \in \D \\ Q \subset P}} |Q| ^{ \frac{\alpha}{d}} \int_Q |\W{V}_Q  W
^\frac{1}{q} (x) \V{g}(x)| \, dx \\ &\qquad\qquad = \sum_{j = 0}^\infty \sum_{\substack{Q \subset P \\ \ell(Q) = 2^{-j} \ell(P)}} |Q| ^{ \frac{\alpha}{d}} \int_Q |\W{V}_Q  W ^\frac{1}{q} (x) \V{g}(x)| \, dx
\\ &\qquad\qquad = |P|^\frac{\alpha}{d} \sum_{j = 0}^\infty 2^{-j\alpha} \sum_{\substack{Q \subset P \\ \ell(Q) = 2^{-j} \ell(P)}}  \int_Q |\W{V}_Q  W ^\frac{1}{q} (x) \V{g}(x)| \, dx
\\ &\qquad\qquad \lesssim |P|^\frac{\alpha}{d}  \int_P  {N}_P (x) |   \V{g}(x)| \, dx \end{align*} where as before \begin{equation*} {N}_P  (x) =  \sup_{P \supseteq Q \ni x}  \|  W ^\frac{1}{q} (x)  \W{V}_Q\|. \end{equation*}

Plugging this back into the original sum gives us \begin{align} \sum_{Q \in \D} & \frac{1}{|Q| ^{1 - \frac{\alpha}{d}}} \int_Q \int_Q \left|\ip{  W^{-\frac{1}{q}}(y) \V{f}(y)}{W ^\frac{1}{q} (x) \V{g}(x) }_{\Cn} \right| \, dx \, dy \nonumber
\\ & \lesssim \sum_k a^{k + 1} \sum_{P \in \MC{S}^k} |P|^\frac{\alpha}{d}  \int_P  {N}_P (x) |   \V{g}(x)| \, dx \nonumber \\ & \leq a \sum_k  \sum_{P \in \MC{S}^k} |P| ^{1 + \frac{\alpha}{d}}  \left( \frac{1}{|P|}\int_P |\V{f}(y)| ^{p - \epsilon'} \, dy \right)^\frac{1}{p - \epsilon'} \nonumber
\\
&\hspace{3.5cm} \times\left(  \frac{1}{|P|}\int_{P} {N}_P (x) |   \V{g}(x)| \, dx\right).  \label{LastEst} \end{align}

However, for any $u \in P$, \begin{align*}\lefteqn{ \frac{1}{|P|} \int_{P} {N}_P(x) |\V{g}(x)| \, dx }\\ & \lesssim
\left(\frac{1}{|P|} \int_{P} ({N}_P(x)) ^\frac{q' - \epsilon}{q' - \epsilon - 1} \, dx \right)^\frac{q' - \epsilon - 1}{q' - \epsilon} \left(\frac{1}{|P|} \int_{P} |\V{g}(x)|^{q' - \epsilon} \, dy \right)^\frac{1}{q' - \epsilon} \\ & \lesssim \|W\|_{\text{A}_{p, q}}^\frac1q \left(M(|\V{g}|^{q' - \epsilon}) (u) \right)^\frac{1}{q' - \epsilon}  \end{align*}
for $\epsilon \approx \|W\|_{\text{A}_{p, q}}^{-1}$ small by Lemma \ref{StopLem}.  On the other hand, we have for $u \in P$ that
\begin{align*}  |P  | ^\frac{\alpha}{d} & \left(\frac{1}{|P|} \int_P |\V{f}(y)| ^{p - \epsilon'} \, dy \right)^\frac{1}{p - \epsilon'}
\\ &= |P|^\frac{ \alpha}{d} \left(\frac{1}{|P|} \int_P |\V{f}(y)| ^{p - \epsilon'} \, dy \right)^\frac{q - p}{q(p - \epsilon')} \left(\frac{1}{|P|} \int_P |\V{f}(y)| ^{p - \epsilon'} \, dy \right)^\frac{p}{q(p - \epsilon')}
\\ &  \leq |P|^\frac{ \alpha}{d} \left(\frac{1}{|P|} \int_P |\V{f}(y)| ^{p} \, dy \right)^\frac{q - p}{qp
} \left(M(|\V{f}| ^{p - \epsilon'}) (u)  \right)^\frac{p}{q(p - \epsilon')}
\\ & = \|\V{f}\|_{L^p} ^\frac{q - p}{q}
 \left(M(|\V{f}| ^{p - \epsilon'}) (u)  \right)^\frac{p}{q(p - \epsilon')} \end{align*} since $ \frac{q\alpha}{d} - \frac{q}{p} + 1  = 0$.  Now define $E_Q$ by \begin{equation*} E_Q = Q \backslash \bigcup_{\substack{Q' \in \MC{S} \\ Q' \varsubsetneq Q}} Q' .\end{equation*} The proof will be completed if we can show that $|E_Q| \geq \frac12 |Q|$ and $\MC{S}^m \cap \MC{S}^k$ if $k \neq m$. To see this, since $\{E_Q\}_{Q \in \MC{S}}$ is a disjoint collection of cubes, we have


 \begin{align*} \eqref{LastEst} & \lesssim a \|W\|_{\text{A}_{p, q}} ^\frac1q \|\V{f}\|_{L^p} ^{\frac{q - p}{q}} \sum_k \sum_{P \in \MC{S}^k} |P| \, \inf_{u \in P} \left(M(|\V{f}| ^{p - \epsilon'}) (u)  \right)^\frac{p}{q(p - \epsilon')} \left(M(|\V{g}|^{q' - \epsilon}) (u) \right)^\frac{1}{q' - \epsilon}
 \\ & \leq a \|W\|_{\text{A}_{p, q}} ^\frac1q \|\V{f}\|_{L^p} ^{\frac{q - p}{q}}  \sum_{Q \in \MC{S}} \int_{E_Q} \, \left(M(|\V{f}| ^{p - \epsilon'}) (u)  \right)^\frac{p}{q(p - \epsilon')} \left(M(|\V{g}|^{q' - \epsilon}) (u) \right)^\frac{1}{q' - \epsilon} \, du
 \\ & \leq a \|W\|_{\text{A}_{p, q}} ^\frac1q \|\V{f}\|_{L^p} ^{\frac{q - p}{q}}   \int_{\Rd} \, \left(M(|\V{f}| ^{p - \epsilon'}) (u)  \right)^\frac{p}{q(p - \epsilon')} \left(M(|\V{g}|^{q' - \epsilon}) (u) \right)^\frac{1}{q' - \epsilon} \, du
 \\ & \leq a \|W\|_{\text{A}_{p, q}} ^\frac1q \|\V{f}\|_{L^p} ^{\frac{q - p}{q}}   \left(\int_{\Rd} \, \left(M(|\V{f}| ^{p - \epsilon'}) (u)  \right)^\frac{p}{p - \epsilon'} \, du \right) ^\frac1q \left(\int_{\Rd} \left(M(|\V{g}|^{q' - \epsilon}) (u) \right)^\frac{q'}{q' - \epsilon} \, du\right) ^\frac{1}{q'}
 \\ & \lesssim  a (\epsilon ')^{-\frac1q}  \epsilon ^{-\frac{1}{q'}} \|W\|_{\text{A}_{p, q}} ^\frac1q \|\V{f}\|_{L^p} ^{\frac{q - p}{q}} \|\V{f}\|_{L^p} ^\frac{p}{q} \|\V{g}\|_{L^{q'}}
  \\ & \lesssim  a \|W\|_{\text{A}_{p, q}} ^{\frac1q  + \frac{r' - 1}{q} + \frac{1}{q'}}  \|\V{f}\|_{L^p} \|\V{g}\|_{L^{q'}}
  \\ & \lesssim  a \|W\|_{\text{A}_{p, q}} ^{ \frac{p'}{q} \left(1 - \frac{\alpha}{d}\right) + \frac{1}{q'}}  \|\V{f}\|_{L^p} \|\V{g}\|_{L^{q'}}\end{align*}

Now pick $k$ such that $Q \in \MC{S}_k$ and let $\W{a} = a^{p - \epsilon'}.$  Without loss of generality we can assume that $0 < \epsilon' < 1$ so that $\W{a} = a^{p - \epsilon'} \geq a^{p - 1} > 2^{d + 1} > 2^d$.  By maximality we have that

\begin{equation*} E_Q = Q \backslash \Big(\bigcup_{\substack{Q' \in \MC{S}^{k + 1} \\ Q' \varsubsetneq Q }} Q' \Big). \end{equation*}

 Note that if $\W{Q}$ is the parent of $Q$ then  \begin{equation*} \frac{1}{|Q|} \int_Q | \, \V{f}(x) |^{p - \epsilon'} \, dx \leq 2^d \frac{1}{|{\color{red}\W{Q}}|} \int_{\W{Q}} | \, \V{f}(x) |^{p - \epsilon'} \, dx \leq \W{a} ^{k}  2^d < \W{a}^{k + 1} \end{equation*} so that $\MC{S}^m \cap \MC{S}^k  {\color{red} = \emptyset} $ if $m \neq k$ and \begin{align*} \Big| \bigcup_{\substack{Q' \in \MC{S}^{k + 1} \\ Q' \varsubsetneq Q }} Q' \Big| &  \leq \frac{1}{\W{a}^{k + 1}} \sum_{\substack{Q' \in \MC{S}^{k + 1} \\ Q' \varsubsetneq Q}}   \int_{Q'}  | \, \V{f}(x) |^{p - \epsilon'} \, dx
\\ & \leq \frac{1}{\W{a}^{k + 1}}    \int_{Q}  | \, \V{f}(x) |^{p - \epsilon'} \, dx
\\ & \leq \frac{\W{a} ^{k}  2^d }{\W{a}^{k + 1}} |Q|
\\ & \leq \frac12 |Q|.  \end{align*}
\end{proof}

\section{Matrix weighted Poincare and Sobolev inequalities}\label{Poincare}

We now prove our matrix weighted Poincar\'e and Sobolev inequalities.  Recall, that in the scalar case the following representation formulas hold:
$$|f(x)-f_Q|\lesssim I_1(|\nabla f|\chi_Q)(x),\qquad x\in Q, f\in C^1(\Rd)$$
and
$$|f(x)|\lesssim I_1(|\nabla f|)(x), \qquad f\in C_0^1(\R^d).$$

\begin{lemma} \label{IntRep} For  $\V{f}, \V{g} \in C_0^1(\mathbb{R}^d)$, we have that \begin{equation*} \left|\langle W^\frac{1}{q} \V{f}, \V{g}\rangle_{L^2} \right| \lesssim \inrd \, \inrd \frac{\left|\left\langle (W^\frac{1}{q} (x) D \V{f} (y)) (x - y) , \V{g} (x) \right\rangle_{\Cn}\right|}{|x  - y|^d} \, dx \, dy \end{equation*}  where $D \V{f} (x) $ is the standard Jacobian matrix of $\V{f}$ at $x$.   \end{lemma}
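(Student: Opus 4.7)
The plan is to establish the vector-valued analogue of the classical scalar representation
$$f(x)= \frac{1}{|S^{d-1}|}\inrd \frac{\nabla f(y)\cdot (x-y)}{|x-y|^d}\,dy,\qquad f\in C_0^1(\Rd),$$
apply it componentwise to $\V{f}$, then pair against $W^\frac{1}{q}(x)\V{g}(x)$ and integrate. The representation for $\V{f}$ itself is the only place where analysis happens; the rest of the statement then follows by Fubini and the triangle inequality for integrals.

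First I would prove the scalar representation by the standard ray-integration argument. For any unit vector $\omega\in S^{d-1}$, compact support of a scalar component $f_j$ of $\V{f}$ gives
$$f_j(x)= -\int_0^\infty \frac{d}{dt}f_j(x+t\omega)\,dt = -\int_0^\infty \nabla f_j(x+t\omega)\cdot\omega \,dt.$$
Averaging this identity over $\omega\in S^{d-1}$ and changing variables $y=x+t\omega$ (so that $\omega=(y-x)/|y-x|$, $t=|y-x|$, and $dy=t^{d-1}\,dt\,d\sigma(\omega)$) yields
$$f_j(x)=\frac{1}{|S^{d-1}|}\inrd \frac{\nabla f_j(y)\cdot (x-y)}{|x-y|^d}\,dy.$$
Collecting the components, and recognizing that the rows of the Jacobian $D\V{f}(y)$ are exactly $\nabla f_j(y)^T$, produces the matrix-vector identity
$$\V{f}(x)=\frac{1}{|S^{d-1}|}\inrd \frac{D\V{f}(y)(x-y)}{|x-y|^d}\,dy,\qquad x\in\Rd.$$

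Next I would multiply by $W^\frac{1}{q}(x)$ (which passes inside the integral in $y$ since it depends only on $x$) and take the $\Cn$ inner product with $\V{g}(x)$:
$$\ip{W^\frac{1}{q}(x)\V{f}(x)}{\V{g}(x)}_{\Cn}=\frac{1}{|S^{d-1}|}\inrd \frac{\ip{(W^\frac{1}{q}(x) D\V{f}(y))(x-y)}{\V{g}(x)}_{\Cn}}{|x-y|^d}\,dy.$$
Integrating in $x$, using Fubini, and pulling the absolute value inside both integrals gives the claimed bound with constant $|S^{d-1}|^{-1}$.

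The only mild obstacle is justifying Fubini. Since $\V{f}\in C_0^1$, $D\V{f}$ is bounded with compact support, so $|D\V{f}(y)(x-y)|/|x-y|^d$ has an integrable singularity on the diagonal (one power of $|x-y|$ cancels) and the $y$-integration is over a bounded set; multiplying by the continuous function $W^\frac{1}{q}(x)\V{g}(x)$, which is locally bounded, one checks absolute integrability of the double integral on any bounded $x$-region. A standard truncation/approximation argument extends this to the full $\inrd\inrd$ integral, completing the proof.
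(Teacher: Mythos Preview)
Your proof is correct and follows essentially the same approach as the paper: both derive the componentwise integral representation $f_j(x)=c_d\inrd \nabla f_j(y)\cdot(x-y)\,|x-y|^{-d}\,dy$ by standard arguments (the paper simply cites this, while you carry out the ray-integration and spherical averaging explicitly), collect components into the Jacobian identity for $\V{f}$, multiply by $W^{1/q}(x)$, and pair with $\V{g}(x)$. Your added Fubini justification is a welcome detail that the paper omits; note also that $|S^{d-1}|=d\omega_d$, so your constant matches the paper's.
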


\begin{proof} Let $\V{f} = (f_1, \cdots, f_n)$ so by standard arguments \begin{equation*}
f_i(x) = - \frac{1}{d \omega_d} \inrd \frac{\langle \nabla f_i (y), (x - y) \rangle_{\Rd}}{|x - y|^d} \, dy \end{equation*} where $\omega_d$ is the volume of the unit ball in $\Rd$ and \begin{equation*} \langle \V{u}, \V{v}\rangle_{\Rd} = \sum_{i = 1}^d u_i v_i \end{equation*} for $\V{u} \in \mathbb{C}^d$ and $\V{v} \in \Rd$.  Thus, by elementary matrix manipulations and the definition of $D\V{f}$ we have that \begin{equation*} W^\frac{1}{q} (x) \V{f}(x) =  - \frac{1}{d \omega_d} \inrd \frac{(W^\frac{1}{q} (x) D\V{f} (y))(x - y) }{|x - y|^d} \, dy \end{equation*} which implies the lemma. \end{proof}

With the help of Lemma \ref{IntRep}, the proof of the following is very similar to the proof Theorem \ref{thm:frac}, and therefore we will only sketch the details.

\begin{theorem} \label{GlobalSob}If $W $  is a matrix A$_{p, q}$ weight where \begin{equation*} \frac{1}{q} = \frac{1}{p} - \frac{1}{d} \end{equation*} then \begin{equation*} \|W^\frac{1}{q} \V{f} \|_{L^q} \lesssim \|W^\frac{1}{q} D \V{f} \|_{L^p} \end{equation*}  for Schwartz functions $\V{f}$ and $\V{g}$.  \end{theorem}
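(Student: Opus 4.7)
The plan is to mirror the proof of Theorem \ref{thm:frac} with $\alpha = 1$, using the pointwise representation of Lemma \ref{IntRep} in place of Lemma \ref{DyadicLem}. By duality it suffices to show
\[\left|\langle W^{\frac1q}\V{f}, \V{g}\rangle_{L^2}\right| \lesssim \|W^{\frac1q} D\V{f}\|_{L^p} \|\V{g}\|_{L^{q'}}\]
for Schwartz $\V{g}$. Applying Lemma \ref{IntRep} together with the pointwise bound $|\langle A(x-y), \V{v}\rangle_{\Cn}| \leq |x-y|\,\|A\|\,|\V{v}|$ on the integrand gives
\[\left|\langle W^{\frac1q}\V{f}, \V{g}\rangle_{L^2}\right| \lesssim \inrd \inrd \frac{\|W^{\frac1q}(x) D\V{f}(y)\| \, |\V{g}(x)|}{|x-y|^{d-1}} \, dx \, dy,\]
a scalar bilinear form of fractional-integral type with $\alpha = 1$. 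The grid-shift argument of Lemma \ref{DyadicLem} then reduces matters, for each of a finite collection of shifted dyadic grids $\D^t$, to controlling
\[\sum_{Q \in \D^t} \frac{1}{|Q|^{1-\frac1d}} \int_Q \int_Q \|W^{\frac1q}(x) D\V{f}(y)\| \, |\V{g}(x)| \, dx \, dy.\]

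The key identification is that
\[\|W^{\frac1q}(x) D\V{f}(y)\| \leq \|W^{\frac1q}(x) W^{-\frac1q}(y)\| \cdot \|W^{\frac1q}(y) D\V{f}(y)\|,\]
so setting $h(y) := \|W^{\frac1q}(y) D\V{f}(y)\|$ (a scalar function with $\|h\|_{L^p} = \|W^{\frac1q} D\V{f}\|_{L^p}$), the integrand over $Q$ is dominated by $\|W^{\frac1q}(x) W^{-\frac1q}(y)\| \, h(y) \, |\V{g}(x)|$. This is precisely the integrand appearing in the proof of Theorem \ref{thm:frac} (with $\alpha = 1$), with $h$ in place of $|\V{f}|$, the same matrix kernel, and the same fractional scaling $|Q|^{-(1-1/d)}$.

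From here the argument is a direct transcription: split $\|W^{\frac1q}(x) W^{-\frac1q}(y)\| \leq \|W^{\frac1q}(x) \W{V}_Q\| \|\W{V}_Q^{-1} W^{-\frac1q}(y)\|$; apply the scalar A${}_\infty$ reverse H\"older estimate of Lemma \ref{IntMaxEst} to the $y$-integral to bound it by $|Q|(M(h^{p-\epsilon'})(u))^{1/(p-\epsilon')}$ for any $u \in Q$ (with $\epsilon' \approx [W]_{\text{A}_{p,q}}^{1-r'}$, $r = 1 + q/p'$); build a stopping-time family $\MC{S}^k$ from dyadic $(p-\epsilon')$-averages of $h$; geometrically sum $\|W^{\frac1q}(x) \W{V}_Q\|$ over dyadic subcubes inside each stopping cube $P$ to replace it by $N_P(x)$; and finally apply H\"older, the $L^{q+\epsilon}$-bound for $N_P$ from Lemma \ref{StopLem}, and the standard $L^t$ bound for the Hardy-Littlewood maximal operator. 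The resulting estimate
\[\left|\langle W^{\frac1q}\V{f}, \V{g}\rangle_{L^2}\right| \lesssim [W]_{\text{A}_{p,q}}^{\frac{p'}{q}(1-\frac1d) + \frac{1}{q'}} \|W^{\frac1q} D\V{f}\|_{L^p} \|\V{g}\|_{L^{q'}}\]
yields the theorem by duality. No genuinely new obstacle appears; the only step that is not literally the $\alpha = 1$ specialization of Theorem \ref{thm:frac} is recognizing that the derivative and the weight can be recombined on the $y$-variable to produce the scalar function $h$ that drives the proof.
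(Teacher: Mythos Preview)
Your proposal is correct and follows essentially the same route as the paper: apply Lemma \ref{IntRep}, dyadically discretize as in Lemma \ref{DyadicLem}, insert the reducing operators, and rerun the stopping-time argument of Theorem \ref{thm:frac} with $\alpha=1$. The only cosmetic difference is that you collapse to the scalar function $h(y)=\|W^{1/q}(y)D\V{f}(y)\|$ before invoking the reducing operators, whereas the paper keeps the matrix $W^{1/q}(y)D\V{f}(y)$ inside the norm $\|\W{V}_Q^{-1}W^{-1/q}(y)(W^{1/q}(y)D\V{f}(y))\|$; since the subsequent H\"older step bounds this by $\|\W{V}_Q^{-1}W^{-1/q}(y)\|\,h(y)$ anyway, the two are equivalent.
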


\begin{proof} The arguments in Lemma \ref{DyadicLem} and Lemma \ref{IntRep} give us that \begin{align*}\lefteqn{\left|\langle W^\frac{1}{q} \V{f}, \V{g}\rangle_{L^2} \right|}\\
 & \lesssim \inrd \, \sum_{k \in \Z} \sum_{t } \sum_{\substack{Q \in \D^t \\ 2^{k - 1} \leq \ell(Q) < 2^k}}   \chi_Q (x) \int_Q \frac{\left|\left\langle (W^\frac{1}{q} (x) D \V{f} (y)) (x - y) , \V{g} (x) \right\rangle_{\Cn}\right|}{|x  - y|^d} \, dy \, dx \\ & \lesssim \sum_{t } \sum_{Q \in \D^t} \frac{1}{|Q|} \int_Q \, \int_Q \, \left|\left\langle (W^\frac{1}{q} (x) D \V{f} (y)) (x - y) , \V{g} (x) \right\rangle_{\Cn}\right| \, dy \, dx \\ & \lesssim
\sum_{t} \sum_{Q \in \D^t} \frac{1}{|Q|^{1 - \frac{1}{d}}} \int_Q \, \int_Q \|{\W{V}}_Q ^{-1} W^{-\frac{1}{q}} (y) (W^\frac{1}{q} (y) D\V{f}(y)) \| | {\W{V}}_Q W^\frac{1}{q} (x) \V{g} (x) | \, dy \, dx.  \end{align*}

Repeating the stopping time arguments from the proof of Theorem \ref{thm:frac} to estimate the last term, we get that \begin{equation*}\left|\langle W^\frac{1}{q} \V{f}, \V{g}\rangle_{L^2} \right| \lesssim \|W^\frac{1}{q} D\V{f} \|_{L^p} \|\V{g}\|_{L^{q'}}. \end{equation*}  \end{proof}


For local Poincar\'e/Sobolev inequalities with gains, let  $V_P, \ V_P ' $ be the A$_p$ reducing operators: \begin{equation*} |V_P \V{e}| \approx \left(\frac{1}{|P|} \int_P |W^{-\frac{1}{p}} (x) \V{e}|^{p'} \, dx \right)^\frac{1}{p'}, \ \ \ |V_P '  \V{e}| \approx \left(\frac{1}{|P|} \int_P |W^\frac{1}{p} (x) \V{e}|^{p } \, dx \right)^\frac{1}{p}. \end{equation*}
(Note: these operators are different then the A$_{p,q}$ reducing operators.)  Furthermore, for this section,  we let \begin{equation*} M_{W, 1} ' \V{f} (x) = \sup_{\substack{P \ni x \\ P \in \D}} \frac{1}{|P| ^{1 - \frac{1}{d}}} \int_{P} |(V_P )^{-1} W^{-\frac{1}{p}} (y) \V{f}(y)| \, dy.\end{equation*}


We will now need the following well known version of the Marcinkiewicz interpolation theorem.  While the constants here are probably not optimal, they will suffice for our applications (see {\color{red}\cite{B}} for a proof in the scalar case, but which easily extends to the finite dimensional setting where in {\color{red}\cite{B}} we define sgn$\vec{f}(x) = |\vec{f}(x)|^{-1} \vec{f}(x) )$ when $\vec{f}(x) \neq 0$.)

\begin{lemma} \label{OffDiagMarcin}  Let $X$ be a finite dimensional normed space and $\Omega$ some measure space.  Let $p_i, q_i$ for $i = 0, 1$ be be exponents with $1 \leq p_i \leq q_i \leq \infty, p_0 < p_1, $ and $q_0 \neq q_1$. If $T$ is subadditive on $L^{p_0} (\Omega; X) + L^{p_1} (\Omega ; X)$  with \begin{equation*} \|T\vec{f} \|_{L^{q_i, \infty}}  \leq A \|\vec{f}\|_{L^{p_i}} \end{equation*} and $p^{-1} = (1 - \theta) p_0 ^{-1} + \theta p_1 ^{-1}, \ q^{-1} = (1 - \theta) q_0 ^{-1} + \theta q_1 ^{-1}$ then \begin{equation*}{\color{red}\|T\vec{f}\|_{L^q(\Omega)} \lesssim \left[2^q A^q \max_{t \in \{0, 1\}}(p_i/p)^{q_i/p_i} |q - q_i|^{-1}\right]^\frac{1}{q} \|\vec{f}\|_{L^p(\Omega)}.}  \end{equation*}  \end{lemma}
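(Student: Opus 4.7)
The plan is to adapt the standard real-method proof of the Marcinkiewicz theorem (see \cite{B}) to the finite-dimensional vector-valued setting. The only structural ingredient needed from the scalar proof is the ability to truncate the input by thresholding its magnitude, which is available to us since $|\vec{f}|$, measured in any fixed norm on $X$, is a genuine scalar function. Subadditivity of $T$ plays the role that sublinearity plays in the scalar argument.

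First, for each $\lambda>0$ I would decompose $\vec{f} = \vec{f}^\lambda + \vec{f}_\lambda$ with $\vec{f}^\lambda = \vec{f}\,1_{\{|\vec{f}|>\alpha(\lambda)\}}$ and $\vec{f}_\lambda = \vec{f}\,1_{\{|\vec{f}|\le\alpha(\lambda)\}}$, where $\alpha(\lambda) = c\lambda^{q/p}$ for a constant $c$ to be optimized at the end. This scaling is forced by the interpolation relations $p^{-1}=(1-\theta)p_0^{-1}+\theta p_1^{-1}$ and $q^{-1}=(1-\theta)q_0^{-1}+\theta q_1^{-1}$. Automatically $\vec{f}^\lambda\in L^{p_0}(\Omega;X)$ and $\vec{f}_\lambda\in L^{p_1}(\Omega;X)$ whenever $\vec{f}\in L^p(\Omega;X)$. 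Subadditivity gives
\[ m\{|T\vec{f}|>\lambda\} \le m\{|T\vec{f}^\lambda|>\lambda/2\} + m\{|T\vec{f}_\lambda|>\lambda/2\}, \]
to which I would apply the two weak-type hypotheses, producing the bound $(2A/\lambda)^{q_0}\|\vec{f}^\lambda\|_{L^{p_0}}^{q_0} + (2A/\lambda)^{q_1}\|\vec{f}_\lambda\|_{L^{p_1}}^{q_1}$.

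Next, combining this with the layer cake identity $\|T\vec{f}\|_{L^q}^q = q\int_0^\infty \lambda^{q-1}\,m\{|T\vec{f}|>\lambda\}\,d\lambda$, I would compute the two resulting double integrals by Fubini. Writing $\|\vec{f}^\lambda\|_{L^{p_0}}^{q_0}$ as $(\|\vec{f}^\lambda\|_{L^{p_0}}^{p_0})^{q_0/p_0}$ and similarly for the $\vec{f}_\lambda$ term, one then pulls the outer $q_i/p_i$ power inside via an elementary Jensen-type inequality; this is exactly where the factor $(p_i/p)^{q_i/p_i}$ emerges. A change of variables from $\lambda$ to the truncation level $t$ reduces both integrals to $\int_0^\infty t^{p-1}m\{|\vec{f}|>t\}\,dt = p^{-1}\|\vec{f}\|_{L^p}^p$, and the reciprocals $|q-q_i|^{-1}$ materialize as the integration constants produced by that change of variables. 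The factor $2^q$ comes from the $\lambda/2$ thresholds appearing in both weak-type estimates.

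I do not expect a serious obstacle beyond careful bookkeeping of the constants. The one mild subtlety is the endpoint $q_1=\infty$ that the hypothesis permits: there the weak-type statement should be read as the genuine $L^\infty$ bound $\|T\vec{f}_\lambda\|_{L^\infty}\le A\|\vec{f}_\lambda\|_{L^{p_1}}$, and the threshold $\alpha(\lambda)$ should be chosen so that $A\|\vec{f}_\lambda\|_{L^{p_1}}\le\lambda/2$ automatically, which removes the corresponding integral. Apart from this adjustment, the argument is a verbatim transcription of the scalar proof in \cite{B}, since the scalar case uses only (i) the magnitude $|f|$, (ii) subadditivity, and (iii) the layer cake formula, all three of which are available here.
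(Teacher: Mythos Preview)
Your proposal is correct and matches the paper's approach: the paper does not prove this lemma in detail but simply refers the reader to \cite{B} for the scalar argument and remarks that it extends to the finite-dimensional setting by defining $\operatorname{sgn}\vec{f}(x)=|\vec{f}(x)|^{-1}\vec{f}(x)$. Your outline is precisely that extension, and your observation that only truncation by magnitude, subadditivity, and the layer cake formula are needed is exactly the point.
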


Our next result is a matrix version of Lemma $1.1$ in \cite{FKS} for the fractional matrix weighted maximal function $M'_{W, 1}$. Before we state and prove this we will need the following simple result. \begin{lemma}   \label{InvRedEst} For any $Q \subseteq P$ and $\V{e} \in \Cn$ we have that \begin{equation*}    |(V_P) ^{-1} \V{e}| \lesssim \left(\frac{|P|}{|Q|}\right)^\frac{1}{p'} |(V_Q ) ^{-1} \V{e}|  \end{equation*} \end{lemma}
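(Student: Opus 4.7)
The plan is to use the definition of the reducing operators directly, then exploit the fact that both $V_P$ and $V_Q$ are self-adjoint (positive definite) to pass from an estimate on the operators themselves to an estimate on their inverses.

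First I would prove the ``forward'' inequality $|V_Q \vec{e}| \lesssim (|P|/|Q|)^{1/p'} |V_P \vec{e}|$ for all $\vec{e} \in \Cn$. This is immediate from the definition of the reducing operators: since $Q \subseteq P$,
\begin{align*}
|V_Q \vec{e}|^{p'} &\approx \frac{1}{|Q|}\int_Q |W^{-1/p}(x)\vec{e}|^{p'}\,dx \\
&\leq \frac{|P|}{|Q|}\cdot\frac{1}{|P|}\int_P |W^{-1/p}(x)\vec{e}|^{p'}\,dx \approx \frac{|P|}{|Q|}\,|V_P \vec{e}|^{p'}.
\end{align*}

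Next I would convert this into the claimed inverse estimate via a short self-adjoint duality argument. Setting $C = (|P|/|Q|)^{1/p'}$, the inequality $|V_Q \vec{e}| \lesssim C|V_P \vec{e}|$ for every $\vec{e}$ is equivalent to the operator bound $\|V_Q V_P^{-1}\| \lesssim C$, obtained by substituting $\vec{e} = V_P^{-1}\vec{f}$. Because $V_P$ and $V_Q$ are positive definite (hence self-adjoint), taking adjoints gives $\|V_P^{-1} V_Q\| \lesssim C$. Substituting $\vec{e} = V_Q^{-1}\vec{f}$ into the resulting inequality $|V_P^{-1} V_Q \vec{e}| \lesssim C|\vec{e}|$ then yields $|V_P^{-1}\vec{f}| \lesssim C|V_Q^{-1}\vec{f}|$, which is exactly the claim.

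There is no real obstacle here; the only subtlety is remembering that the self-adjointness of $V_P$ and $V_Q$ is essential for the duality step (for non-symmetric $M$, a bound on $\|M\|$ does not transfer to $\|M^{-1}\|$, but for positive definite reducing operators the adjoint trick $\|AB^{-1}\| = \|B^{-1}A\|$ is exactly what makes inversion well-behaved). The implicit constants absorb the (dimension-dependent, but uniform) $\approx$ constants coming from the definition of the reducing operators.
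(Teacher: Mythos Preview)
Your proof is correct and follows essentially the same route as the paper: both first establish $|V_Q\vec{e}| \lesssim (|P|/|Q|)^{1/p'}|V_P\vec{e}|$ by enlarging the domain of integration, then convert this to the inverse bound via $\|V_Q V_P^{-1}\| = \|V_P^{-1} V_Q\|$, which holds because $V_P$ and $V_Q$ are self-adjoint. The paper records this last step as the identity $\|AB\|=\|BA\|$ for positive definite $A,B$, while you spell out the adjoint argument explicitly.
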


\begin{proof}   We have \begin{align*} |V_P  \V{e}| &\approx \frac{1}{|P|^\frac{1}{p'} }\left( \int_P  |W^{-\frac{1}{p} } (y) \V{e} |^{p'} \, dy \right)^\frac{1}{p'} \\ & \geq \left(\frac{1}{|P| }\right)^\frac{1}{p'} \left( \int_Q  |W^{-\frac{1}{p}} (y) \V{e} |^{p'} \, dy \right)^\frac{1}{p'} \\ & \approx \left(\frac{|Q|}{|P|}\right)^\frac{1}{p'} |V_Q \V{e}| \end{align*} which implies that \begin{equation*} \|(V_P )  ^{-1} V_Q \| = \|V_Q  (V_P ) ^{-1}\|  \lesssim \left(\frac{|P|}{|Q|}\right)^\frac{1}{p'}. \end{equation*}  \end{proof}

\begin{lemma}  \label{MatrixFKSLem} Let $W$ be a matrix A${}_p$ weight, let $p \leq d$, and let $1 \leq k \leq \frac{{\color{red}d}}{d - 1 }.$    Then there exists $q < p$ independent of $k$ such that ${\color{red}p - q} \approx \|W\|_{\text{A}_p} ^\frac{p'}{p}$   where \begin{equation*}   \left(\frac{1}{|Q|} \int_{Q} (M_{W, 1}' \V{f} (x) )^{kq^*}  \, dx \right)^\frac{1}{kq^*} \lesssim (q^* - q)^{-\frac{1}{k q^*}} |Q|^\frac{1}{d} \left(\frac{1}{|Q|} \int_Q |\V{f}(x)|^{q^*} \, dx \right)^\frac{1}{q^*}  \end{equation*} for all $\V{f}$ supported on $Q$ and all $q^* > q$.

\end{lemma}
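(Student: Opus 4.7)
My plan is to reduce $M_{W,1}'$ pointwise to a scalar fractional maximal operator via Hölder and reverse Hölder, and then apply the off-diagonal Marcinkiewicz interpolation of Lemma~\ref{OffDiagMarcin} to produce the factor $(q^{*}-q)^{-1/(kq^{*})}$.

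First, I would apply the scalar reverse Hölder inequality for $\text A_p$ weights to the scalar weight $y\mapsto \|W^{-1/p}(y)\vec e\|^{p'}$, whose scalar $\text A_r$ characteristic is controlled by $[W]_{\text A_p}$ (as noted in the remark following Corollary~\ref{ApqDual}). This produces an exponent $q<p$ with $p-q\approx [W]_{\text A_p}^{-p'/p}$ for which
\begin{equation*}
\left(\frac{1}{|P|}\int_P \|(V_P)^{-1}W^{-1/p}(y)\|^{q'}\,dy\right)^{1/q'}\lesssim 1
\end{equation*}
uniformly over cubes $P$. Hölder's inequality with conjugate exponents $(q',q)$ inside the defining average of $M_{W,1}'$ then yields the pointwise bound
\begin{equation*}
M_{W,1}'\vec f(x)\;\lesssim\;\sup_{x\in P\in\D}|P|^{1/d}\left(\frac{1}{|P|}\int_P |\vec f|^q\,dy\right)^{1/q}.
\end{equation*}

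Next I would exploit the support condition on $\vec f$. For $x\in Q$ and dyadic $P\ni x$, the supremum above splits into the cases $P\supsetneq Q$ and $P\subseteq Q$. Since $\vec f$ is supported in $Q$, the first case is handled directly by Hölder (using $|P|\ge|Q|$ together with $q\le p\le d$) and yields the uniform bound $\lesssim |Q|^{1/d}\bigl(|Q|^{-1}\int_Q|\vec f|^{q^{*}}\bigr)^{1/q^{*}}$, which already matches the conclusion up to constants. The remaining local contribution equals $\bigl(\mathcal M_q^{Q}(|\vec f|^q)\bigr)^{1/q}$, where $\mathcal M_q^{Q}h(x):=\sup_{x\in P\subseteq Q}|P|^{q/d-1}\int_P h$ is the scalar fractional maximal of order $q$ restricted to subcubes of $Q$.

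Finally, to bound the local piece in $L^{kq^{*}}(Q)$, I would apply Lemma~\ref{OffDiagMarcin} to $\mathcal M_q^{Q}$, using the Hardy--Littlewood--Sobolev weak-type endpoint $\mathcal M_q^{Q}:L^{1}(Q)\to L^{d/(d-q),\infty}(Q)$ with constant $\lesssim 1$ and the trivial strong endpoint $\mathcal M_q^{Q}:L^{\infty}(Q)\to L^{\infty}(Q)$ with constant $|Q|^{q/d}$. The Marcinkiewicz interpolation line $t=sq^{*}/q$ passes through the source--target pair $(L^{k}(Q),L^{kq^{*}/q}(Q))$; taking a $1/q$-th power converts this into a bound on $\mathcal S_Q\vec f:=\bigl(\mathcal M_q^{Q}(|\vec f|^q)\bigr)^{1/q}$ in $L^{kq^{*}}(Q)$ in terms of $\|\vec f\|_{L^{kq}(Q)}$, and a final Hölder embedding $L^{q^{*}}(Q)\hookrightarrow L^{kq}(Q)$---valid for $k\le q^{*}/q$, and in particular for $k\in[1,d/(d-1)]$ since $q\ge 1$---moves the source from $L^{kq}$ to $L^{q^{*}}$. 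The Marcinkiewicz constant $|t-q_0|^{-1/t}$ at the weak endpoint $q_0=d/(d-q)=q^{*}/q$, after the $1/q$-th power and the accompanying volume factors, is what produces the claimed prefactor $(q^{*}-q)^{-1/(kq^{*})}|Q|^{1/d}$.

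The main obstacle is the careful bookkeeping of the various constants and volume factors in the final step: one must verify that the Marcinkiewicz constant from Lemma~\ref{OffDiagMarcin}, combined with the $|Q|^{q/d}$ factor from the $L^{\infty}$ endpoint and the $|Q|^{1/(kq)-1/q^{*}}$ factor from the Hölder embedding, collapse exactly to $(q^{*}-q)^{-1/(kq^{*})}|Q|^{1/d}$ for every $k\in[1,d/(d-1)]$. The appearance of $q^{*}-q$ traces back to the identity $q^{*}-q=q^{2}/(d-q)$ applied to the distance $|t-q^{*}/q|$ at the interpolated target $t=kq^{*}/q$, while the exponent $1/(kq^{*})$ emerges naturally from the outer $1/q$-th power together with the $L^{kq^{*}/q}$ norm. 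If the line for some $k$ in the allowed range fails to place $(L^{q^{*}},L^{kq^{*}})$ directly on the Marcinkiewicz line, a secondary choice of endpoints (depending on $k$) may be required, but the uniform choice of $q$ independent of $k$ ensures the constant $p-q\approx [W]_{\text A_p}^{-p'/p}$ is common to the whole range.
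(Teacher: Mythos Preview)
Your reduction of $M_{W,1}'$ to a scalar fractional maximal via reverse H\"older is sound, and it is close in spirit to what the paper does.  However, the interpolation step contains a genuine gap: you are tacitly identifying the free exponent $q^{*}$ of the lemma with the Sobolev conjugate $qd/(d-q)$ of $q$.  Your claimed ``identity'' $q^{*}-q=q^{2}/(d-q)$ and the equality $q_{0}=d/(d-q)=q^{*}/q$ hold only for that single value, whereas the lemma requires the conclusion for \emph{every} $q^{*}>q$.  More to the point, the Marcinkiewicz line through the endpoints you chose, namely $(L^{1},L^{d/(d-q),\infty})$ and $(L^{\infty},L^{\infty})$ for $\mathcal M_{q}^{Q}$, has fixed ratio $t/s=d/(d-q)$.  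After the $1/q$-th power this becomes the line of ratio $d/(d-q)$ in the $(\text{source},\text{target})$ exponents for $\mathcal S_{Q}$, not the line of ratio $k$.  Since $k\le d/(d-1)<d/(d-q)$ for $q>1$, the desired pair $(q^{*},kq^{*})$ is never on your line, and the H\"older patch you propose on the source side does not recover the required blow-up factor $(q^{*}-q)^{-1/(kq^{*})}$ (it produces instead a distance to the fixed Sobolev value, not to $q$).

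The paper proceeds differently.  Rather than fixing two scalar endpoints and interpolating a scalar operator, it proves directly that $M_{W,1}'$ is weak $(r,kr)$ from $L^{r}(Q,\tfrac{dx}{|Q|})$ to $L^{kr,\infty}(Q,\tfrac{dx}{|Q|})$ with constant $\lesssim |Q|^{1/d}$ for \emph{every} $r\ge q$, using Lemma~\ref{InvRedEst} to localize to subcubes of $Q$ and the inequality $1+\tfrac{kr}{d}-k\ge 0$ (valid precisely for $k\le \tfrac{d}{d-1}$ and $r\ge 1$) to control the measure of the level set.  This yields a one-parameter family of weak-type bounds lying exactly on the line of slope $k$; choosing $r=q$ and $r=\tilde p=2q^{*}-q$ as the two endpoints and applying Lemma~\ref{OffDiagMarcin} then lands strong-type at $(q^{*},kq^{*})$ with the constant $|kq^{*}-kq|^{-1/(kq^{*})}\approx (q^{*}-q)^{-1/(kq^{*})}$.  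The essential idea you are missing is that the weak-type endpoints must themselves depend on $k$ (and on $q^{*}$), which requires proving the weak-type for $M_{W,1}'$ directly rather than passing through a fixed scalar fractional maximal.
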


\begin{proof} The proof is similar to the proof of Lemma $1.1$ in \cites{FKS}.  If $Q \subseteq P$ then the previous lemma gives us that there exists $C > 1$ independent of $\V{f}$ (and in fact independent of $W$) where \begin{align}\lefteqn{\frac{1}{|P| ^{1 - \frac{1}{d}}} \int_{P} |(V_P )^{-1} W^{-\frac{1}{p}} (y) \V{f}(y)| \, dy} \nonumber  \\
&\qquad \leq C \left(\frac{|P|}{|Q|}\right)^\frac{1}{p'} \frac{1}{|P| ^{1 - \frac{1}{d}}}  \int_{Q} |(V_Q )^{-1} W^{-\frac{1}{p}} (y) \V{f}(y)| \, dy \nonumber \\ &\qquad = C \left(\frac{|P|}{|Q|}\right)^{\frac{1}{p'} - 1 + \frac{1}{d}} \frac{1}{|Q|^{1 - \frac{1}{d}}} \int_{Q} |(V_Q )^{-1} W^{-\frac{1}{p}} (y) \V{f}(y)| \, dy \nonumber  \\ &\qquad \leq  C \frac{1}{|Q|^{1 - \frac{1}{d}}} \int_{Q} |(V_Q )^{-1} W^{-\frac{1}{p}} (y)\V{f}(y)| \, dy \label{matrixFKSest}\end{align} since $\V{f}$ is supported on $Q$ and $\frac{1}{d} - 1 + \frac{1}{p'} \leq 0$ by assumption.

Let \begin{equation*} E_\lambda = \{x \in Q : M_{W, 1} ' \V{f} (x) >  \lambda\}. \end{equation*} Let $\{P_j\}$ be the maximal dyadic subcubes of $Q$ such that \begin{equation*}  \frac{1}{|P_j| ^{1 - \frac{1}{d}}} \int_{P_j} |(V_{P_j} )^{-1} W^{-\frac{1}{p}} (y) \V{f}(y)| \, dy > \frac{\lambda}{C} \end{equation*} where $C$ is above. Then by maximality and \eqref{matrixFKSest} we have $E_\lambda  \subset \bigsqcup_j P_j.$

By the comments at the beginning of the proof of Lemma \ref{IntMaxEst}, we can pick $1 < q < p$ where both $q' - p' \approx \|W\|_{\text{A}_p} ^{-\frac{p'}{p}}$ and \begin{equation*} \sup_{J \in \D} \, \frac{1}{|J|}  \int_J \|(V_{J} )^{-1} W^{-\frac{1}{p} } (y)\|^{q'} \, dy  < \infty \end{equation*} are true (and where the above supremum is independent of $W$). Note then that \begin{equation*} p - q = \frac{q' - p'}{(p' - 1)(q' - 1)} \approx \|W\|_{\text{A}_p} ^{-\frac{p'}{p}} \end{equation*}  We then have by H\"{o}lder's inequality that \begin{align*} |E_\lambda|  &\leq \sum_j |P_j| \\ & \lesssim   \frac{1}{\lambda ^{kq}} \sum_j  \frac{1}{ |P_j| ^{kq - \frac{kq}{d} - 1}} \left(\int_{P_j} |(V_{P_j}  )^{-1} W^{-\frac{1}{p}} (y) \V{f}(y)| \, dy \right)^{kq} \\ & \lesssim  \frac{1}{\lambda ^{kq}} \sum_j  |P_j| ^{1  +\frac{kq}{d} - k}  \left(\int_{P_j}  |\V{f}(y) |^q \, dy \right)^k \end{align*}  However, $1 + \frac{kq}{d} - k > 1 + \frac{k}{d} -k \geq 0$ since $k \leq \frac{d}{d - 1}$.  Thus, since $\bigsqcup_j P_j \subset Q$ and $k \geq 1$ we have \begin{equation*} \left(\frac{|E_\lambda|}{|Q|}\right)^{\frac{1}{kq}} \leq \frac{C'}{\lambda} |Q|^{ \frac{1}{d} } \|\V{f}\|_{L^q(Q, \frac{dx}{|Q|})} \end{equation*} which means that $M_{W, 1} ' $ is bounded from $L^q (Q, \frac{dx}{|Q|})$ into $L^{qk,\infty} (Q,  \frac{dx}{|Q|})$ with \begin{equation*} \|M_{W, 1} '  \V{f} \|_{L^{qk, \infty}(Q, \frac{dx}{|Q|})} \lesssim |Q|^\frac{1}{d} \|\V{f}\|_{L^q (Q, \frac{dx}{|Q|})}. \end{equation*}   A similar argument shows that  ${\color{red} M_{W, 1} ' }$ is bounded from $L^{\tilde{p}} (Q, \frac{dx}{|Q|})$ into weak-$L^{{\tilde{p}}k} (Q,  \frac{dx}{|Q|})$ with \begin{equation*} \|M_{W, 1} '  \V{f} \|_{L^{ \tilde{p}k, \infty}(Q, \frac{dx}{|Q|})} \lesssim |Q|^\frac{1}{d} \|\V{f}\|_{L^{\tilde{p}} (Q, \frac{dx}{|Q|})}. \end{equation*} for all $\tilde{p} > q$.

Finally, picking $\tilde{p} = 2 q^* - q > q^*$, we can pick $\theta$ where $ (q^*)^{-1} = \theta q^{-1} + (1 - \theta)\tilde{p} ^{-1}$. Setting $q_0 = kq, q_1 = k\tilde{p}, p_0 = q, $ and $p_1 = \tilde{p}$ so that $(q^*)^{-1} = \theta (p_0)^{-1} + (1 - \theta) (p_1)^{-1}$ and $(kq^*)^{-1} = \theta (q_0)^{-1} + (1 - \theta) (q_1)^{-1}  $ completes the proof.

\end{proof}

We will need one more lemma before we prove Theorem \ref{MainThmSob}

\begin{lemma} \label{StoppingTimeLemma} Let $a > 0$ and for a measurable $\V{f}$ let $\MC{Q} ^k$ denote the collection \begin{equation*} \MC{Q} ^k = \{P \in \D : a^k < \frac{1}{|P|} \int_P \|V_P ^{-1}    W^{- \frac{1}{p}}  (y) \, \V{f}(y) \| \, dy \leq a^{k + 1}\}.\end{equation*} Let $\MC{S}^k$ be the collection of $P\in \D$ that are maximal with respect to the inequality  \begin{equation*}\frac{1}{|P|} \int_P \|V_P ^{-1}    W^{- \frac{1}{p}}  (y) \, \V{f}(y) \| \, dy > a^k\end{equation*} and set $\MC{S} = \bigcup_k \MC{S}^k$.  We can then pick $a \approx \Ap{W} ^\frac{1 + p'}{p}$  such that \begin{list}{}{}
\item $ 1) \, \MC{S} ^k \cap \MC{S}^{k'} = \emptyset$ if $k \neq k'$,
\item $ 2) \, $ If $Q\in \MC{S}$ and if $E_Q$ is defined by \begin{equation*} E_Q = Q \backslash \bigcup_{\substack{Q' \in \MC{S} \\ Q' \varsubsetneq Q}} Q', \end{equation*}
then $|E_Q|\geq\frac12|Q|$.
\end{list}
\end{lemma}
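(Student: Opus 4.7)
The plan is a matrix-weighted Calder\'on--Zygmund stopping time argument. The principal technical subtlety is that the reducing operator $V_P$ depends on $P$, so passing between nested cubes requires controlling operator norms $\|V_R^{-1}V_P\|$; unlike the reverse inequality in Lemma~\ref{InvRedEst}, these are not bounded by dimensional constants alone but require the $A_p$ hypothesis, which is what dictates the exponent $(1+p')/p$ in the choice of $a$.

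Before addressing either property I would record a matrix doubling estimate for the reducing operators: for dyadic $Q' \subseteq Q$,
$$\|V_{Q'}^{-1}V_Q\| = \|V_Q V_{Q'}^{-1}\| \lesssim [W]_{A_p}^{1/p}\,(|Q|/|Q'|)^{1/p}.$$
This follows from (i) the earlier Corollary that $|W^{-1/p}(\cdot)\V{e}|^{p'}$ is a scalar $A_{p'}$ weight with constant $\lesssim [W]_{A_p}^{p'/p}$, (ii) the $A_{p'}$ reverse-containment $w(Q)/w(Q')\leq [w]_{A_{p'}}(|Q|/|Q'|)^{p'}$ for $Q'\subseteq Q$, and (iii) the reducing-operator identity $\int_Q |W^{-1/p}\V{e}|^{p'}\,dy = |Q|\,|V_Q\V{e}|^{p'}$. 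Specializing to a cube and its dyadic parent gives the uniform bound $\|V_Q^{-1}V_{\tilde Q}\| \lesssim [W]_{A_p}^{1/p}$ with only dimensional implicit constants.

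For \textbf{Property 1}, if $Q \in \MC{S}^k$ with parent $\tilde Q$, maximality gives $m_{\tilde Q}(\V{f}) := |\tilde Q|^{-1}\int_{\tilde Q}\|V_{\tilde Q}^{-1}W^{-1/p}\V{f}\|\,dy \leq a^k$. The pointwise estimate $\|V_Q^{-1}W^{-1/p}\V{f}(y)\| \leq \|V_Q^{-1}V_{\tilde Q}\|\,\|V_{\tilde Q}^{-1}W^{-1/p}\V{f}(y)\|$ combined with the parent--child bound yields $m_Q(\V{f}) \leq 2^d\|V_Q^{-1}V_{\tilde Q}\|a^k \leq C[W]_{A_p}^{1/p}a^k$. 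Choosing $a$ larger than this constant forces $m_Q(\V{f}) < a^{k+1}$, excluding $Q$ from $\MC{S}^{k'}$ for $k' > k$; the same doubling estimate applied to the hypothetical $\MC{S}^{k'}$-maximality for $k' < k$ (which would force every ancestor to have $m$-average $\leq a^{k'}$) rules out that case as well, giving the claimed disjointness.

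For \textbf{Property 2}, fix $Q \in \MC{S}^k$ and let $\{Q'\}$ be the maximal elements of $\MC{S}$ strictly contained in $Q$; by Property 1 each $Q' \in \MC{S}^{k'}$ with $k' \geq k+1$, so $m_{Q'}(\V{f}) > a^{k+1}$. Combining the stopping inequality with the general doubling bound $\|V_{Q'}^{-1}V_Q\| \lesssim [W]_{A_p}^{1/p}(|Q|/|Q'|)^{1/p}$, summing over the disjoint $\{Q'\}$, and invoking Property 1 to control the total integral of $\|V_Q^{-1}W^{-1/p}\V{f}\|$ over $Q$ yields
$$a^{k+1}\sum_{Q'}(|Q'|/|Q|)^{1+1/p}\,\lesssim\,[W]_{A_p}^{2/p}\,a^k.$$
Converting this into a bound on $\sum_{Q'}|Q'|$ -- by means of a convexity/Jensen step exploiting $|Q'|/|Q|\leq 2^{-d}$, or equivalently an appeal to the weak-type bound for the matrix weighted maximal function -- produces $\sum_{Q'}|Q'| \leq C'[W]_{A_p}^{(1+p')/p}a^{-1}|Q|$, so the choice $a \approx [W]_{A_p}^{(1+p')/p}$ gives $|E_Q| \geq |Q|/2$. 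The principal obstacle is the non-uniform behavior of $\|V_{Q'}^{-1}V_Q\|$, which forces the volume factor $(|Q|/|Q'|)^{1/p}$ in the doubling estimate; absorbing this factor across the disjoint family of stopping cubes without losing the optimal $[W]_{A_p}$ exponent is the most delicate step and is precisely what produces the extra $p'/p$ beyond the $1/p$ already present in Property 1.
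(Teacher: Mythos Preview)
Your treatment of Property 1 is essentially the same as the paper's and is fine. The gap is in Property 2, in the step you call the ``convexity/Jensen step.'' From your doubling bound $\|V_{Q'}^{-1}V_Q\|\lesssim [W]_{A_p}^{1/p}(|Q|/|Q'|)^{1/p}$ you correctly arrive at
\[
\sum_{Q'} \Bigl(\tfrac{|Q'|}{|Q|}\Bigr)^{1+1/p} \;\lesssim\; \frac{[W]_{A_p}^{2/p}}{a},
\]
but this does \emph{not} control $\sum_{Q'}|Q'|/|Q|$. The exponent $1+1/p>1$ goes the wrong way: for $t_i\in(0,1)$ one has $t_i^{1+1/p}\le t_i$, so a small $\ell^{1+1/p}$-sum is strictly weaker than a small $\ell^1$-sum. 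Concretely, take $t_i=1/N$ for $i=1,\dots,N$: then $\sum t_i=1$ while $\sum t_i^{1+1/p}=N^{-1/p}\to 0$. Knowing $|Q'|/|Q|\le 2^{-d}$ only pushes in the same (wrong) direction, and the weak-type bound for $M_{W,\alpha}'$ does not rescue this either, since it is not a statement about the family $\{Q'\}$ inside a fixed $Q$ with the $V_Q$-normalisation.

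The paper's argument avoids this precisely by \emph{not} using the global doubling bound with the losing factor $(|Q|/|Q'|)^{1/p}$. Instead it runs a second stopping time inside $Q$ (the one from Lemma~\ref{StopLem}): let $A_{Q,k}$ be the maximal subcubes with $\|V_{Q'}^{-1}V_Q\|>a'$. The Calder\'on--Zygmund estimate behind Lemma~\ref{StopLem} gives $|\bigcup A_{Q,k}|\le\tfrac14|Q|$ once $a'\approx [W]_{A_p}^{p'/p}$. For the remaining $Q'\in\MC{S}^{k+1}$ one has the \emph{uniform} bound $\|V_{Q'}^{-1}V_Q\|\le a'$, and then the standard scalar argument applies:
\[
\sum_{Q'\notin A_{Q,k}}|Q'|\;\le\;\frac{a'}{a^{k+1}}\int_Q\|V_Q^{-1}W^{-1/p}\V f\|\;\lesssim\;\frac{a'\,[W]_{A_p}^{1/p}}{a}\,|Q|\;\le\;\tfrac14|Q|
\]
for $a\approx a'[W]_{A_p}^{1/p}\approx [W]_{A_p}^{(1+p')/p}$. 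This two-family split is exactly the missing idea; it is what produces the extra $p'/p$ in the exponent, not the Jensen step you propose.
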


\begin{proof}  Note that if $Q \in \MC{S}^k$ and $\W{Q}$ is the parent of $Q$ then for any $\V{e} \in \C$ we have \begin{equation*} |{V}_{Q} ^{-1} \V{e} | \leq |{V}_{Q} ^{'} \V{e} | \lesssim  |{V}_{\W{Q}} ^{'} \V{e} | \leq \Ap{W}^\frac{1}{p} |{V}_{\W{Q}} ^{-1} \V{e} |.\end{equation*}

Thus, by maximality, \begin{equation} \frac{1}{|Q|} \int_Q |{V}_Q ^{-1}    W^{- \frac{1}{q}}  (y) \, \V{f}(y) | \, dy \lesssim \Ap{W}^\frac{1}{p}  \frac{1}{|\W{Q}|} \int_{{\color{red}\W{Q}}} |{V}_{\W{Q}} ^{-1}    W^{- \frac{1}{p}}  (y) \, \V{f}(y) | \, dy  \leq  \Ap{W}^\frac{1}{p} a^k \leq a^{k + 1}.  \label{lastest} \end{equation} if $a \gtrsim \Ap{W}^\frac{1}{p}$, which clearly proves $1)$.

As for $2)$, by maximality we have that

\begin{equation*} E_Q = Q \backslash \Big(\bigcup_{\substack{Q' \in \MC{S}^{k + 1} \\ Q' \varsubsetneq Q }} Q' \Big). \end{equation*}

\noindent  We now break up the disjoint collection $\{Q' \in \MC{S}^{k + 1}: Q' \varsubsetneq Q \}$ into two disjoint collections via the ``stopping time" argument from Lemma \ref{StopLem}.  In particular let $A_{Q, k}$ be those cubes {\color{red} $ Q'  \in \MC{S}^{k + 1}$ such that $Q' \varsubsetneq Q$ and $\|{V}_{Q'} ^{-1} {V}_{Q}  \| > a'$}, so that by the proof of Lemma \ref{StopLem} we have that $|\cup A_{Q, k}| \leq \frac14 |Q|$ for $a' \gtrsim \Ap{W}^{\frac{p'}{p}}$. Therefore, (\ref{lastest}) implies that there exists $C > 0$ independent of $W$ where

\begin{align*} \Big| \bigcup_{\substack{Q' \in \MC{S}^{k + 1} \\ Q' \varsubsetneq Q }} Q' \Big| &  \leq |\cup A_{Q, k}|  \\ & {\color{red}   \qquad  + \frac{1}{a^{k + 1}} \sum_{\substack{Q' \in \MC{S}^{k + 1} \\ Q' \varsubsetneq Q, \ Q' \nsubseteq \bigcup A_{Q, k}}}}   \int_{Q'}  |{V}_{Q'} ^{-1}   W^{-\frac{1}{p}}  (x) \, \V{f}(x) | \, dx  \\ & \leq \frac14 |Q| +  \frac{a'}{a^{k + 1}}  \sum_{\substack{Q' \in \MC{S}^{k + 1} \\ Q' \varsubsetneq Q, \ Q' \nsubseteq \bigcup A_{Q, k}}}   \int_{Q'}  |{V}_{Q} ^{-1}   W^{-\frac{1}{p}}  (x) \, \V{f}(x) | \, dx \\ & \leq  \frac14 |Q| +  \frac{a'}{a^{k + 1}} \int_{Q}  |{V}_{Q} ^{-1}   W^{-\frac{1}{p}}  (x) \, \V{f}(x) | \, dx \\ & \leq \frac14 |Q| +   \frac{[W]_{\text{A}_p} ^\frac{1}{p} a^k a'}{a^{k + 1}} |Q| \\ & \leq \frac12 |Q|.  \end{align*}  if $a = 4a' [W]_{\text{A}_p} ^{\color{red} \frac{1}{p}} \approx \Ap{W}^\frac{1+p'}{p}$. \end{proof}

We are now ready to prove our matrix weighted Poincar\'e and Sobolev inequalities with gains, namely Theorem \ref{MainThmSob}.

\begin{proof}[Proof of Theorem \ref{MainThmSob}]  We first assume $p \leq d$.  The case that $p > d$ will be handled later by a duality argument. Pick  $\tilde{q} > p$ (to be determined momentarily).  Let $\V{g} \in L^{\tilde{q} '}$, and assume $\V{f}$  and $\V{g}$ are supported on $Q$. As in Theorem \ref{GlobalSob} we have that \begin{align*}\lefteqn{ \left|\langle W^\frac{1}{p} \V{f}, \V{g}\rangle_{L^2} \right|} \\
& \lesssim
 \sum_{t \in \{0, 1/3\}^d} \sum_{I \in \D^t} \frac{1}{|I|} \int_I \, \int_I \, \left|\left\langle (W^\frac{1}{p} (x) D \V{f} (y)) (x - y) , \V{g} (x) \right\rangle_{\Cn}\right| \, dy \, dx \end{align*} Now fix a dyadic grid $\D$.  Assume that $Q \in \D$.  Further, assume that $I \in \D(Q)$ since \begin{align}  \label{SumOut} \sum_{\substack{ I \in \D \\ I  \supseteq Q}} & \frac{1}{|I|} \int_I \, \int_I \, \left|\left\langle (W^\frac{1}{p} (x) D \V{f} (y)) (x - y) , \V{g} (x) \right\rangle_{\Cn}\right| \, dy \, dx  \\ & \lesssim  \frac{1}{|Q|} \int_Q \, \int_Q \, \left|\left\langle (W^\frac{1}{p} (x) D \V{f} (y)) (x - y) , \V{g} (x) \right\rangle_{\Cn}\right| \, dy \, dx \nonumber   \\ & \lesssim  \frac{1}{|Q|^{1 - \frac{1}{d}}} \int_Q \, \int_Q \|{V}_Q ^{-1} W^{-\frac{1}{p}} (y) (W^\frac{1}{p} (y) D\V{f}(y)) \| | {{V}}_Q W^\frac{1}{p} (x) \V{g}(x)| \, dx \, dy \nonumber \end{align} which will be easily estimated later in the proof.

 Let $F(y)  = W^\frac{1}{p} (y) D\V{f} (y)$. Employing the notation in Lemma \ref{StoppingTimeLemma} (but with $\V{f}$ replaced by $F$) and setting $a \gtrsim \Ap{W}^\frac{1 + p'}{p}$ we estimate

 \begin{align*}
 \sum_{I \in \D(Q)}  & \frac{1}{|I|^{1 - \frac{1}{d}}} \int_I \, \int_I \|{V}_I ^{-1} W^{-\frac{1}{p}} (y) F(y) \| | {{V}}_I W^\frac{1}{p} (x) \V{g}(x)| \, dx \, dy  \\   & =  \sum_k  \sum_{I \in \MC{Q} ^k \cap \D(Q)} \frac{1}{|I|^{1 - \frac{1}{d}}} \int_I \, \int_I \|{V}_I ^{-1} W^{-\frac{1}{p}} (y) F(y) \| | {{V}}_I W^\frac{1}{p} (x) \V{g}(x)| \, dx \, dy  \\ & \leq   \sum_k a^{k + 1} \sum_{I \in \MC{Q} ^k \cap \D(Q)} |I| ^\frac{1}{d} \int_I | {{V}}_I W^\frac{1}{p} (x) \V{g}(x)| \, dx  \\ &  \leq  \sum_k a^{k + 1} \sum_{P \in {\MC{S}} ^k} \sum_{I \in \D(P) \cap \D(Q)} |I| ^\frac{1}{d} \int_I | {{V}}_I W^\frac{1}{p} (x) \V{g}(x)| \, dx. \numberthis \label{FKSThmEst1}
 \end{align*}

We now break up (\ref{FKSThmEst1}) into two sums corresponding to $P \subseteq Q$ and $P  \supset Q$.  In the later case, note that

\begin{align*} \lefteqn{\sum_{I \in \D(Q) \cap \D(P)} |I| ^\frac{1}{d} \int_I | {{V}}_I W^\frac{1}{p} (x) \V{g}(x)| \, dx}  \\ & \qquad  = \sum_{I \in \D(Q) } |I| ^{ \frac{1}{d}} \int_I |V_I  W ^\frac{1}{p} (x) \V{g}(x)| \, dx \\ & = \sum_{j = 0}^\infty \sum_{\substack{I \subset Q \\ \ell(I) = 2^{-j} \ell(Q)}} |I| ^{ \frac{1}{d}} \int_I | V_I  W ^\frac{1}{p} (x) \V{g}(x)| \, dx \\ & = |Q|^\frac{1}{d} \sum_{j = 0}^\infty 2^{-j} \sum_{\substack{I \subset Q \\ \ell(I) = 2^{-j} \ell(Q)}}  \int_I | V_I  W ^\frac{1}{p} (x) \V{g}(x)| \, dx
\\ & \lesssim |Q|^\frac{1}{d}  \int_Q  {N}_Q (x) |   \V{g}(x)| \, dx \numberthis \label{TilingEst} \end{align*}

\noindent where as before \begin{equation*} {N}_Q  (x) =  \sup_{Q \supseteq I \ni x}  \|  W ^\frac{1}{p} (x)  V_I\|. \end{equation*}  Thus,

\begin{align*}\lefteqn{ \sum_k a^{k + 1}  \sum_{\substack{ P \in {\MC{S}} ^k \\ P  \supset Q}} \sum_{I \in \D(P) \cap \D(Q)} |I| ^\frac{1}{d} \int_I | {{V}}_I W^\frac{1}{p} (x) \V{g}(x)| \, dx}  \\ & \lesssim \sum_k a^{k + 1} \sum_{\substack{ P \in {\MC{S}} ^k \\ P  \supset Q}} |Q|^\frac{1}{d}  \int_Q  {N}_Q (x) |   \V{g}(x)| \, dx  \\
&  \leq a\sum_k  \sum_{\substack{ P \in {\MC{S}} ^k \\ P  \supset Q}} |Q|^\frac{1}{d}   \left(\frac{1}{|P|} \int_P \|V_P ^{-1}    W^{- \frac{1}{p}}  (y) \, F(y) \| \, dy\right)\left( \int_Q  N_Q(x) | \V{g}(x)| \, dx \right) \\
& \lesssim a\sum_k  \sum_{\substack{ P \in {\MC{S}} ^k \\ P  \supset Q}} |Q|^\frac{1}{d}  \left(\frac{|P|}{|Q|}\right)^\frac{1}{p'}   \left(\frac{1}{|P|} \int_Q \|V_Q ^{-1}    W^{- \frac{1}{p}}  (y) \, F(y) \| \, dy\right) \\
&\qquad \times\left( \int_Q  N_Q(x) | \V{g}(x)| \, dx \right)  \\
&  \lesssim a\sum_k  \sum_{ \substack{P \in {\MC{S}} ^k \\ P  \supset Q}} |Q|^\frac{1}{d}  \left(\frac{|P|}{|Q|}\right)^{\frac{1}{p'} - 1}   \left( \int_Q \|V_Q ^{-1}    W^{- \frac{1}{p}}  (y) \, F(y) \| \, dy\right) \\
&\qquad \times \left(\frac{1}{|Q|} \int_Q  N_Q(x) | \V{g}(x)| \, dx \right) \\
& \lesssim  a |Q|     \left(\frac{1}{|Q|^{1 -\frac{1}{d}}} \int_Q \|V_Q ^{-1}    W^{- \frac{1}{p}}  (y) \, F(y) \| \, dy\right)\left(\frac{1}{|Q|} \int_Q  N_Q(x) | \V{g}(x)| \, dx \right) \numberthis \label{FKSThmEst2} \end{align*} by Lemma \ref{InvRedEst} and the fact that $\MC{S}^k\cap\MC{S}^{k'}=\varnothing$ if $k\not=k'$ which implies that \begin{equation*} \sum_k \sum_{ \substack{P \in {\MC{S}} ^k \\ P  \supset Q}} \left(\frac{|P|}{|Q|}\right)^{\frac{1}{p'} - 1} < \infty \end{equation*}

By   Lemma \ref{StopLem}  we can pick $\tilde{q} > p$ and $C_1 > 0$ small (to be determined more precisely later) but independent of $W$ where  if $\epsilon = \tilde{q} - p$ then \begin{equation*} \sup_{P \in \D} \, \frac{1}{|P|} \int_{P} ({N}_P (x) )^{\tilde{q} + \epsilon} \, dx  \lesssim [W]_{\text{A}_p} \end{equation*} and \begin{equation*} \epsilon =  \frac{C_1}{\Ap{W}}. \end{equation*}  Then we have by H\"{o}lder's inequality

\begin{align*} \frac{1}{|Q|} \int_{Q} {N}_Q (x) |   \V{g}(x)| \, dx & \leq
\left(\frac{1}{|Q|} \int_Q (N_Q(x))^{\tilde{q} + \epsilon} \, dx \right)^\frac{1}{\tilde{q} + \epsilon} \left(\frac{1}{|Q|} \int_Q |\V{g}(x)|^{\frac{\tilde{q} + \epsilon}{\tilde{q} + \epsilon - 1}} \, dx\right)^\frac{\tilde{q} + \epsilon - 1}{\tilde{q} + \epsilon} \\ &
 \lesssim \Ap{W}^\frac{1}{p} \inf_{u \in Q} \left(M(|\V{g}|^{\frac{\tilde{q} + \epsilon}{\tilde{q} + \epsilon - 1}}) (u) \right)^\frac{\tilde{q} + \epsilon - 1}{\tilde{q} + \epsilon}.\end{align*}

Thus, \begin{equation*} (\ref{FKSThmEst2}) \lesssim  a \Ap{W}^\frac{1}{p} \int_Q (M_{W, 1} ' F)(u) \left(M(|\V{g}|^{\frac{\tilde{q} + \epsilon}{\tilde{q} + \epsilon - 1}}) (u) \right)^\frac{\tilde{q} + \epsilon - 1}{\tilde{q} + \epsilon} \, du. \end{equation*}

On the other hand, if $P \subseteq Q$ in (\ref{FKSThmEst1}) then using \eqref{TilingEst} we estimate \begin{align*} \sum_k & a^{k + 1} \sum_{\substack{ P \in {\MC{S}} ^k \\ P \subseteq Q}} \sum_{I \in \D(P) } |I| ^\frac{1}{d} \int_I | {{V}}_I W^\frac{1}{p} (x) \V{g}(x)| \, dx  \\ & \lesssim \sum_k a^{k + 1} \sum_{\substack{ P \in {\MC{S}} ^k \\ P \subseteq Q}}  |P|^\frac{1}{d} \int_P N_P (x) |\V{g}(x)| \, dx \\ & \lesssim  a \sum_k   \sum_{\substack{ P \in \MC{S}^k \\ P \subseteq Q}} |P| \left( \frac{1}{|P|^{1 - \frac{1}{d}}}  \int _P   \|{V}_P ^{-1}      W^{- \frac{1}{p}}  (y)\, F(y)\|  \, dy \right) \left( \frac{1}{|P|} \int_{P} {N}_P (x) |   \V{g}(x)| \, dx\right) \\ & \lesssim a \sum_k  \sum_{\substack{ P \in \MC{S}^k \\ P \subseteq Q}} |E_P| \left( \frac{1}{|P|^{1 - \frac{1}{d}}}  \int _P   \|{V}_P ^{-1}      W^{- \frac{1}{p}}  (y)\, F(y)\|  \, dy \right) \left( \frac{1}{|P|} \int_{P} {N}_P (x) |   \V{g}(x)| \, dx\right) \\ & \lesssim a \Ap{W}^\frac{1}{p} \int_Q (M_{W, 1} '  F)(u)  \left(M(|\V{g}|^{\frac{\tilde{q} + \epsilon}{\tilde{q} + \epsilon - 1}}) (u) \right)^\frac{\tilde{q} + \epsilon - 1}{\tilde{q} + \epsilon} \, du \end{align*} by the sparseness of the family $\{E_P\}$.

 Furthermore, by H\"{o}lder's inequality and the reverse H\"{o}lder's inequality we have \begin{align*} \eqref{SumOut} & \lesssim \frac{a}{|Q|^{1 - \frac{1}{d}}} \int_Q \int_Q \|{V}_Q ^{-1} W^{-\frac{1}{p}} (y) (W^\frac{1}{p} (y) D\V{f}(y)) \| | {{V}}_Q W^\frac{1}{p} (x) \V{g}(x)| \, dx \, dy \\ & \leq a \Ap{W}^\frac{1}{p} |Q| \inf_{u \in Q} (M_{W, 1} '  F)(u)  \left(M(|\V{g}|^{\frac{\tilde{q} + \epsilon}{\tilde{q} + \epsilon - 1}}) (u) \right)^\frac{\tilde{q} + \epsilon - 1}{\tilde{q} + \epsilon} \\ & \leq a \Ap{W}^\frac{1}{p} \int_Q (M_{W, 1} '  F)(u)  \left(M(|\V{g}|^{\frac{\tilde{q} + \epsilon}{\tilde{q} + \epsilon - 1}}) (u) \right)^\frac{\tilde{q} + \epsilon - 1}{\tilde{q} + \epsilon} \, du \end{align*}

\noindent Thus, we have (plugging back in for $F$) in the case that $Q \in \D$ \begin{equation*}  \left|\langle W^\frac{1}{p} \V{f}, \V{g}\rangle_{L^2} \right| \lesssim    a \Ap{W}^\frac{1}{p} \int_Q \left(M_{ W, 1} ' ( W^\frac{1}{p}  D\V{f}) (u)    \right) \left(M(|\V{g}|^{{\frac{\tilde{q} + \epsilon}{\tilde{q} + \epsilon - 1}}}) (u) \right)^{\frac{\tilde{q} + \epsilon - 1}{\tilde{q} + \epsilon}} \, du.  \end{equation*}

\noindent   But then another application of H\"{o}lder's inequality and the standard ``$L^{1 + \delta}$" bound for the unweighted maximal operator gives us that \begin{align*} \left|\langle W^\frac{1}{p} \V{f}, \V{g}\rangle_{L^2} \right| & \lesssim a \Ap{W}^\frac{1}{p} \|M_{W, 1} '  (W^\frac{1}{p}  D\V{f} )    \|_{L^{\tilde{q}}(Q)} \left\| \left(M(|\V{g}|^{\frac{\tilde{q} + \epsilon}{\tilde{q} + \epsilon - 1}})  \right)^{\frac{\tilde{q} + \epsilon - 1}{\tilde{q} + \epsilon}} \right\|_{L^{\tilde{q}'}(Q)} \\ & \lesssim a \Ap{W}^\frac{1}{p} \epsilon^{- \frac{1}{\W{q} '}} {\color{red}\cancel{\Ap{W}^\frac{1}{p}}} \|M_{ W, 1} '  (W^\frac{1}{p}  D\V{f} )    \|_{L^{\tilde{q}}(Q)} \|\V{g}\|_{L^{\tilde{q}'}(Q)}. \end{align*}

Finally, pick $C_1, C_2 > 0$ small enough (independent of $W$) so that \begin{equation*} k = \frac{p + \frac{C_1}{\Ap{W}}}{p - \frac{C_2}{\Ap{W}^\frac{p'}{p}}} < \frac{d}{d-1} \end{equation*} and \begin{equation*}q = p - \frac{2 C_2}{\Ap{W}^\frac{p'}{p}} {\color{red} < }     q^* = p - \frac{C_2}{\Ap{W}^\frac{p'}{p}} \end{equation*} is close enough to $p$ so that  \begin{equation*}   \left(\frac{1}{|Q|} \int_{Q} (M_{W, 1}' (W^\frac{1}{p}  D\V{f} ) (x) )^{k q^*}  \, dx \right)^\frac{1}{k q^*} \lesssim \Ap{W}^{\frac{p'}{p^2}} |Q|^\frac{1}{d} \left(\frac{1}{|Q|} \int_Q \|(W^\frac{1}{p}  D\V{f} )(x) \|^{q^*} \, dx \right)^\frac{1}{q^*}.  \end{equation*} by Lemma \ref{MatrixFKSLem}.   But then as $kq^* = \tilde{q}$  we  have \begin{align*} \left|\langle W^\frac{1}{p} \V{f}, \V{g}\rangle_{L^2} \right| & \lesssim a \Ap{W}^\frac{1}{p} \epsilon^{-\frac{1}{p'}} \|M_{W, 1} '  (W^\frac{1}{p}  D\V{f} )    \|_{L^{\tilde{q}}(Q)} \|\V{g}\|_{L^{\tilde{q}'}(Q)} \\  & = a \Ap{W}^\frac{1}{p} \epsilon^{- \frac{1}{p'}} |Q|^\frac{1}{\tilde{q}} \|M_{W, 1} '  (W^\frac{1}{p}  D\V{f} )    \|_{L^{\tilde{q}}(Q,  \frac{dx}{|Q|})} \|\V{g}\|_{L^{\tilde{q}'}(Q)} \\ & \lesssim a \Ap{W}^\frac{1}{p} \epsilon^{-\frac{1}{p'}} \Ap{W}^{\frac{p'}{p^2}} |Q|^{\frac{1}{d} + \frac{1}{\tilde{q}} - \frac{1}{q^*}} \|W^\frac{1}{p}  D\V{f}     \|_{L^{q^*}(Q)} \|\V{g}\|_{L^{\tilde{q} '}(Q) }. \end{align*}

\noindent so that by H\"{o}lder's inequality \begin{align*}
\left(\frac{1}{|Q|} \int_Q |W^\frac{1}{p} (x) \V{f}(x) |^{p+\epsilon'} \, dx \right)^{\frac{1}{p+\epsilon' }} &  \lesssim
\Ap{W} ^{1 + \frac{1}{p} + \frac{p'}{p} + \frac{p'}{p^2}} |Q|^\frac{1}{d} \left(\frac{1}{|Q|} \int_Q |W^\frac{1}{p} (x) D \V{f}(x) |^{p-\epsilon' } \, dx \right)^\frac{1}{p -\epsilon'}
\\ &  \lesssim
\Ap{W} ^{1 + \frac{2p'}{p}} |Q|^\frac{1}{d} \left(\frac{1}{|Q|} \int_Q |W^\frac{1}{p} (x) D \V{f}(x) |^{p-\epsilon' } \, dx \right)^\frac{1}{p -\epsilon'}  \end{align*}

\noindent for $\epsilon' \approx [W]_{\text{A}_p} ^{-\max\{1, \frac{p'}{p}\}}$, which completes the proof when $p \leq d$ if $Q \in \D$.

If $Q \not \in \D$, then we can pick disjoint cubes $Q_j \in \D$ for $j = 1, \ldots, 2^d$ with $\ell(Q) \leq  \ell(Q_j) < 2 \ell(Q)$ and $Q \subseteq \sqcup_j Q_j$.  Slightly abusing notation and writing $(D\V{f})_j = \chi_{Q_j} D\V{f}$ and defining $\V{g_j}$ similarly, we then obviously have \begin{align*}  \sum_{I \in \D} & \frac{1}{|I|} \int_I \, \int_I \, \left|\left\langle (W^\frac{1}{p} (x) D \V{f} (y)) (x - y) , \V{g} (x) \right\rangle_{\Cn}\right| \, dy \, dx \\ & \leq \sum_{i, j  = 1} ^{2^d}  \sum_{I \in \D} \frac{1}{|I|} \int_I \, \int_I \, \left|\left\langle (W^\frac{1}{p} (x) (D \V{f})_i (y)) (x - y) , \V{g_j} (x) \right\rangle_{\Cn}\right| \, dy \, dx. \end{align*}  If $i \neq j$ then obviously \begin{align*} \sum_{I \in \D} \frac{1}{|I|} & \int_I \, \int_I \, \left|\left\langle (W^\frac{1}{p} (x) D \V{f_i} (y)) (x - y) , \V{g_j} (x) \right\rangle_{\Cn}\right| \, dy \, dx \\ & = \sum_{\substack {I \in \D \\ I \supseteq Q_i \cup Q_j}} \frac{1}{|I|} \int_{Q_i} \, \int_{Q_j} \, \left|\left\langle (W^\frac{1}{p} (x) (D \V{f})_i (y)) (x - y) , \V{g_j} (x) \right\rangle_{\Cn}\right| \, dy \, dx \\ & \lesssim  \frac{1}{|Q|} \int_{Q} \, \int_Q \, \left|\left\langle (W^\frac{1}{p} (x) D \V{f} (y)) (x - y) , \V{g} (x) \right\rangle_{\Cn}\right| \, dy \, dx \end{align*} which was already estimated.  Finally if $i = j$ then this reduces to the proof above when $\V{f}$ and $\V{g}$ are supported on the same dyadic cube in $\D$, which completes the proof when $p \leq d$.

Now if $p > d$ then clearly $p' < \frac{d}{d - 1} \leq d$ since $d \geq 2$.  As was earlier mentioned, we have that $W$ is a matrix A${}_p$ weight if and only if $\W{W} = W ^{- \frac{p'}{p}}$ is a matrix A$_{p'}$.  Furthermore, writing $V_I (W, p), V_I'(W, p)$ to temporarily denote the matrix weight and exponent that the corresponding reducing operator is with respect to, a direct computation shows that we may let $V_I( \W{W}, p') = V_I '(W, p)$ and $V_I' (\W{W}, p')  = V_I (W, p)$ and moreover by Proposition \ref{ApqDual} that \begin{equation*} \|W^{-\frac{p'}{p}}\|_{\text{A}_{p'}}  \approx \|W\|_{\text{A}_p} ^\frac{p'}{p}. \end{equation*}

Thus, the conclusion of Lemma \ref{MatrixFKSLem} applies to the exponent $p'$ with respect to the maximal function \begin{equation*} M_{\W{W}, 1} ' \V{g} (x) = \sup_{\substack{P \ni x \\ P \in \D}} \frac{1}{|P| ^{1 - \frac{1}{d}}} \int_{P} |(V_P ' )^{-1} W^{\frac{1}{p}} (y) \V{g}(y)| \, dy. \end{equation*}  \noindent We now applying Lemma \ref{StoppingTimeLemma} again with respect to the exponent $p'$ and the matrix A${}_{p'}$ weight $\W{W}$, and repeating word for word the arguments above (interchanging the roles of $\vec{g}$ and $F$).  More precisely,  pick $C_1, C_2 > 0$ small enough (independent of $W$) so that \begin{equation*} k = \frac{p' + \frac{C_1}{\Apprime{\W{W}}}}{p' - \frac{C_2}{\Apprime{\W{W}}^\frac{p}{p'}}} = \frac{p' + \frac{C_1}{\Ap{W} ^\frac{p'}{p}}}{p' - \frac{C_2}{\Ap{W}}} < \frac{d}{d-1} \end{equation*} and \begin{equation*}q = p' - \frac{2 C_2}{\Apprime{\W{W}}^\frac{p}{p'}} = p' - \frac{2 C_2}{\Ap{W}} , \   q^* = p' - \frac{ C_2}{\Ap{W}} \end{equation*} is close enough to $p'$ and as before $\W{q} = {\color{red}p'} + \epsilon$ for $\epsilon = C_1 \Ap{W} ^{-\frac{p'}{p}}$.  Then  we get that \begin{align*} \left|\langle W^\frac{1}{p} \V{f}, \V{g}\rangle_{L^2} \right| & \lesssim a \Ap{W} ^\frac{1}{p}\epsilon^{-\frac{1}{p}} \| W^\frac{1}{p}  D\V{f}     \|_{L^{\tilde{q}'}(Q)} \|M_{\W{W}, 1} \V{g}\|_{L^{\tilde{q}}(Q)} \\  & = a \Ap{W} ^\frac{1}{p} \epsilon^{-\frac{1}{p}}  |Q|^\frac{1}{\tilde{q}}  \| W^\frac{1}{p}  D\V{f}     \|_{L^{\tilde{q}'}(Q)} \|M_{\W{W}, 1} \V{g}\|_{L^{\tilde{q}}(Q), \frac{dx}{|Q|}} \\ & \lesssim a \Ap{W} ^\frac{1}{p} \epsilon^{-\frac{1}{p}}  [W^{-\frac{p'}{p}}]_{\text{A}_{p'}} ^{\frac{p}{(p')^2}} |Q|^{\frac{1}{d} + \frac{1}{\tilde{q}} - \frac{1}{q^*}} \|W^\frac{1}{p}  D\V{f}     \|_{L^{\tilde{q} '} (Q)} \|\V{g}\|_{L^{q^*}(Q) } \\ & \lesssim   \Ap{W} ^{2 + \frac{p'}{p}} |Q|^{\frac{1}{d} + \frac{1}{\tilde{q}} - \frac{1}{q^*}} \|W^\frac{1}{p}  D\V{f}     \|_{L^{\tilde{q} '}(Q)} \|\V{g}\|_{L^{q^*}(Q) }\end{align*} since \begin{equation*} a \approx \Apprime{W^{-\frac{p'}{p}}} ^\frac{1 + p}{p'} = \Ap{W}^{1 + \frac{1}{p}} \text{ and } \Apprime{W^{-\frac{p'}{p}}} ^{\frac{p}{(p')^2}}  = \Ap{W}^\frac{1}{p'} \end{equation*} which completes the proof when $p > d$. \end{proof}

Finally we end this section with the proof of Theorem \ref{MainThmPoin}.

\begin{proof}[Proof of Theorem \ref{MainThmPoin}]  Without loss of generality, assume $\V{f}$ and $\V{g}$ are supported on $Q$.  Then again by standard arguments, we have for $x \in Q$ that \begin{equation} \label{Rep} f_i (x) - (f_i)_Q = -\frac{1}{|Q|} \int_Q \int_0^{|x - y|} \frac{\left\langle \nabla f_i (x + \frac{r (y - x)}{|y - x|}), (x - y) \right\rangle_{\Rd} }{|x - y|^{\color{red} \cancel{d}}} \, dr \, dy \end{equation}  so that
\begin{align*} \lefteqn{|\langle W^\frac{1}{p} (x) (\V{f}(x) - \V{f}_Q)  , \V{g}(x) \rangle_{\Cn}|}\\ &  \leq  \frac{1}{|Q|} \int_Q \int_0^{|x - y|} \, \frac{|\langle W^\frac{1}{p}(x) (D\V{f}  (x + \frac{r (y - x)}{|x - y|})) (x - y), \V{g} (x) \rangle_{\Cn}|  }{|y - x|^{\color{red} \cancel{d}}} \, dr \, dy
\end{align*}

By again standard arguments (see \cites{J} p. 226 for example), we therefore have
\begin{align*} \lefteqn{ |\langle W^\frac{1}{p} (x) (\V{f}(x) - \V{f}_Q),\vec{g}(x)\rangle_{\Cn}|  \chi_Q(x)}\\
 & \lesssim \int_Q \frac{|\langle W^\frac{1}{p}(x) (D\V{f} (y)) (x - y), \V{g}(x)\rangle_{\Cn}| }{|x - y|^{d}} \, dy\\
&\lesssim \sum_t\sum_{I\in \mathscr D^t}\frac{1}{|I|}\int_I\int_I |\langle W^\frac{1}{p}(x) (D\V{f} (y)) (x - y), \V{g}(x)\rangle_{\Cn}| \end{align*} The proof is now the same as the proof of the Theorem \ref{MainThmSob} starting with inequality \eqref{SumOut}.

\end{proof}

 Note that if $\vec{f} \in C^1(B)$ for an open ball $B$ then \eqref{Rep} holds with $Q$ replaced by $B$, so in this case if $Q$ is a cube containing $B$ with comparable side length, then  \begin{align*} |\langle & W^\frac{1}{p} (x) (\V{f}(x) - \V{f}_B),\vec{g}(x)\rangle_{\Cn}|  \chi_B(x)
  \\ & \lesssim {\color{red} \chi_B(x)} \int_Q \frac{|\langle W^\frac{1}{p}(x) (\chi_B (y) D\V{f} (y)) (x - y), \V{g}(x)\rangle_{\Cn}| }{|x - y|^{d }} \, dy \end{align*} so arguing as we did before proves Theorem \ref{MainThmPoin} for open balls.

\section{Existence of degenerate elliptic systems}\label{existsec}
For our existence results we will consider general nonlinear elliptic systems whose degeneracy is governed by a matrix A$_p$ weight. {\color{red} Furthermore, as in \cite{FKS}, we will only consider the case that $\Omega$ is bounded}.  Note that we will only deal with real valued systems and solutions (at least in the nonlinear case) in order to apply the abstract results in \cite{KS}.  Consider a mapping $\A:\R^d\times \MndR \ra \MndR$ such that $x\mapsto \A(x,\eta)$ is measurable for all $\eta \in \MndR$ and $\eta\mapsto \A(x,\eta)$ is continuous for a.e. $x\in\R^d$.  Note that this makes the mapping $x\mapsto \A(x,\eta(x))$ a measurable mapping whenever $\eta(x)$ is a measurable matrix valued function.  We will assume that $1<p<\infty$ and $\A$ satisfies
\vspace{3mm}

(i) $ \langle\A(x,\eta),\eta\rangle_{\text{tr}}\gtrsim \|W^{1/p}(x)\eta\|^p,  \qquad \eta \in \MndR$

(ii) $| \langle\A(x,\eta),\nu\rangle_{\text{tr}}|\lesssim \|W^{1/p}(x)\eta\|^{p-1}\|W^{1/p} (x) \nu\|, \qquad \eta,\nu\in \MndR$

(iii) $\langle \A(x,\eta)-\A(x,\nu), \eta-\nu\rangle_{\text{tr}}\geq 0$, \qquad $\eta,\nu\in \MndR$

where $\langle \,\cdot\,,\,\cdot\,\rangle_{\text{tr}}$ is the Frobenius inner product defined by
$$\langle A,B\rangle_{\text{tr}}=\text{tr}(B^* A)=\sum_{j=1}^n\sum_{k=1}^d {A}_{jk} B_{jk}$$ and is modified accordingly when $A$ and $B$ have complex entries.

A typical example  of a non-linear operator $\A$ (and one that will be discussed more in the last section) is given by the degenerate system of $p$-Laplace operators
$$\A(x,\eta)=\langle \eta G(x),\eta\rangle_{\text{tr}}^{\frac{p-2}{2}} \eta G(x)$$ where $G: \Rd \rightarrow {\color{red}\MdR}$ and $G$ satisfies

\vspace{3mm}
(i'') $ \langle \eta G(x) ),\eta \rangle_{\text{tr}}\gtrsim \|W^{1/p}(x)\eta\|^2,  \qquad \eta \in \MndR$

(ii'') $| \langle \eta G(x) , \nu\rangle_{\text{tr}}|\lesssim \|W^{1/p}(x)\eta\|\|W^{1/p} (x) \nu\|, \qquad \eta,\nu\in \MndR$

\noindent which is automatically satisfied with $n = d$ and $W = G^\frac{p}{2}$ (see the computations on p. $375$ in \cite{IM} for a verification of (iii).)  Of course, if $G : \Rd \rightarrow \MnR$ then one could also look at $$\A(x,\eta)=\langle G(x) \eta ,\eta\rangle_{\text{tr}}^{\frac{p-2}{2}} G(x) \eta $$ \noindent and write similar obvious conditions like (i'') and (ii'') for this nonlinear operator to satisfy (i) and (ii) (see again p. 378 for the simple verification of (iii)).

Such degenerate systems arise from minimizing the energy functional
$$\mathcal E(\vec{g})=\int_\Omega \langle D\vec{g}(x)G(x),D\vec{g}(x)\rangle_{\text{tr}}^{\frac{p}{2}}\,dx.$$
These systems also arise naturally in the theory mappings of finite distortion \cite[Chapter 15]{IM}.

We will now be concerned with the following system of equations in a domain $\Omega$:
\begin{equation}\label{diverform} \Div\A(x,D\vec{u}(x))= - (\Div F)(x) , \end{equation}
where $\vec{u}:\Omega\ra \mathbb{R}^n$, $\Div F(x)=(\dv F^1(x),\ldots,\dv F^n(x)),$ and $F^i$ are the row vectors of $F(x)$.  We will focus on  weak solutions to \eqref{diverform}:
\begin{equation} \int_\Omega \langle \A(x,D\vec{u}(x)),D\vec{\varphi}(x)\rangle_{\text{tr}}\,dx= - \int_{\Omega} \iptr{F(x)}{D\V{\phi} (x)}\, dx \label{LinPDE} \end{equation} for any $\V{\varphi} \in C_c^\infty(\Omega).$
As mentioned in the {\color{red}introduction}, a natural domain for these types of systems of equations is given by the matrix weighted Sobolev space H$^{1,p}(\Omega,W)$ defined as the completion of $\{ \V{u} \in C^\infty(\Omega) : \V{u}, D \V{u} \in L^p(\Omega,W)\}$ with respect to the norm:
$$\|\vec{u}\|_{\text{H}^{1,p}(\Omega,W)}=\left(\int_\Omega |W^{\frac1p}(x)\vec{u}(x)|^p\,dx+\int_\Omega \|W^{\frac1p}(x)D\vec{u}(x)\|^p\,dx\right)^{\frac1p}.$$
Moreover, the space H$^{1,p}_0(\Omega,W)$ is the completion of $C_0^\infty(\Omega)$ in the norm $\|\cdot\|_{\text{H}^{1,p}(\Omega,W)}.$   The elements of $\text{H}^{1,p}(\Omega,W)$ (or $\text{H}_0^{1,p}(\Omega,W)$) are Cauchy sequences.  However, given any element $\{(\vec{u}_k,D\vec{u}_k)\}\in \text{H}^{1,p}(\Omega,W)$ there exists $\vec{u}$ and matrix $U$ such that $\vec{u}_k\ra \vec{u}$ in $L^p(\Omega,W)$ and $D\vec{u}_k\ra U$ in $L^p(\Omega,W)$.  The elements are unique provided that $\|W\|, {\color{red}\|W^{-\frac{1}{p}}\|^{{p'}}}$ are locally integrable (see \cite[p.14]{HKM}){\color{red}, and furthermore, $U = D\V{u}$ in the distributional sense.}  If $W\in A_p$ then the exact same arguments that are contained in \cite[Theorem 5.3]{CMR}, prove that \begin{equation*} {\text H}^{1, p}(\Omega, W) = \{\vec{u}\in \mathcal{W}^ {1, 1} _{\textit{loc}} : \vec u, D\vec u\in L^p(\Omega, W)\}.\end{equation*}
{\color{red}\sout{i.e., this shows that $U=D\vec{u}=\lim_k D\vec{u_k}$}}  Hereafter, when working with the solutions \eqref{diverform} we will always assume that $W\in A_p$.

As is customary we will only prove the existence of weak solutions when $F = 0$. Also note that, with the exception of Theorem \ref{nonlinexist} below, vector functions $\V{u}$ in H$^{1,p}(\Omega,W)$ and H$^{1,p}_0(\Omega,W)$ will assumed to be $\Cn$ valued rather than $\R^n$ valued.

\begin{theorem} \label{nonlinexist} Suppose that $W\in$ A$_p$ and $\A$ satisfies (i), (ii), and (iii) above.  If $\vec h\in \text{H}^{1,p}(\Omega,W)$, then the system
$$\Div \A(x,D\vec u)=0$$
has a weak solution such that $\vec u-\vec h\in \text{H}_0^{1,p}(\Omega,W)$.
\end{theorem}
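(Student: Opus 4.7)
The plan is to follow the classical monotone operator approach à la Browder--Minty, with conditions (i), (ii), (iii) playing the roles of coercivity, growth, and monotonicity respectively, and with Theorem \ref{MainThmPoin} providing the functional-analytic backbone. First I would use the matrix Poincar\'e inequality on a bounded open set containing $\Omega$ (or directly on $\Omega$ if bounded) applied to $\vec{u}\in C_0^\infty(\Omega)$ to conclude
\begin{equation*}
\|\vec{u}\|_{L^p(\Omega,W)}\lesssim \T{diam}(\Omega)\,\|W^{\frac1p}D\vec{u}\|_{L^p(\Omega)},
\end{equation*}
so that on $\T{H}_0^{1,p}(\Omega,W)$ the native norm is equivalent to $\vec{u}\mapsto \|W^{\frac1p}D\vec{u}\|_{L^p}$. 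Via the injection $\vec{u}\mapsto D\vec{u}$ this identifies $\T{H}_0^{1,p}(\Omega,W)$ with a closed subspace of the reflexive and separable Banach space $L^p(\Omega,W;\Mnd)$, so $\T{H}_0^{1,p}(\Omega,W)$ is itself reflexive and separable.

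Next I would reduce to a homogeneous Dirichlet problem: writing $\vec{u}=\vec{h}+\vec{v}$, we seek $\vec{v}\in \T{H}_0^{1,p}(\Omega,W)$ with
\begin{equation*}
\int_\Omega \iptr{\A(x,D\vec{h}+D\vec{v})}{D\V{\varphi}}\,dx=0\qquad\text{for all }\V{\varphi}\in \T{H}_0^{1,p}(\Omega,W).
\end{equation*}
Define $T:\T{H}_0^{1,p}(\Omega,W)\to \T{H}_0^{1,p}(\Omega,W)^*$ by $\langle T\vec{v},\V{\varphi}\rangle$ equal to the left-hand side above. The growth hypothesis (ii) together with H\"older's inequality gives
\begin{equation*}
|\langle T\vec{v},\V{\varphi}\rangle|\lesssim \|W^{\frac1p}(D\vec{h}+D\vec{v})\|_{L^p}^{p-1}\|W^{\frac1p}D\V{\varphi}\|_{L^p},
\end{equation*}
so $T$ is well defined and maps bounded sets to bounded sets.

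Then I would verify the three hypotheses of the Browder--Minty theorem (as stated, e.g., in \cite{KS}). Monotonicity of $T$ is immediate from (iii):
\begin{equation*}
\langle T\vec{v}_1-T\vec{v}_2,\vec{v}_1-\vec{v}_2\rangle=\int_\Omega\iptr{\A(x,D\vec{h}+D\vec{v}_1)-\A(x,D\vec{h}+D\vec{v}_2)}{D\vec{v}_1-D\vec{v}_2}\,dx\ge 0.
\end{equation*}
For coercivity, (i) and (ii) combined with the triangle inequality and Young's inequality give, after absorbing a small fraction of $\|W^{\frac1p}D\vec{v}\|_{L^p}^p$ from the $\vec{h}$--cross term into the dominant $\|W^{\frac1p}D\vec{v}\|_{L^p}^p$ piece,
\begin{equation*}
\langle T\vec{v},\vec{v}\rangle\gtrsim \|W^{\frac1p}D\vec{v}\|_{L^p}^p-C(\vec{h}),
\end{equation*}
so $\langle T\vec{v},\vec{v}\rangle/\|\vec{v}\|\to\infty$ as $\|\vec{v}\|\to\infty$. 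For hemicontinuity, for fixed $\vec{v},\vec{w},\V{\varphi}$ and $t_n\to t$, continuity of $\eta\mapsto \A(x,\eta)$ gives pointwise a.e. convergence of the integrand while (ii) furnishes the $L^1$ dominant $\|W^{\frac1p}(D\vec{h}+D\vec{v}+t\vec{w})\|^{p-1}\|W^{\frac1p}D\V{\varphi}\|$, so dominated convergence yields $\langle T(\vec{v}+t_n\vec{w}),\V{\varphi}\rangle\to\langle T(\vec{v}+t\vec{w}),\V{\varphi}\rangle$.

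By Browder--Minty there exists $\vec{v}\in \T{H}_0^{1,p}(\Omega,W)$ with $T\vec{v}=0$, and $\vec{u}:=\vec{h}+\vec{v}$ is the desired weak solution. I expect the main technical subtlety to be the very first step: confirming that the abstract completion defining $\T{H}_0^{1,p}(\Omega,W)$ genuinely embeds into a space of $\Mnd$-valued functions so that the derivative map is well defined on the completion, for which Theorem \ref{MainThmPoin} together with lower semicontinuity of the weighted $L^p$--norm under weak-$L^p(W)$ limits is the key ingredient. The monotone-operator machinery itself, once the space is set up, is then entirely standard and the constants produced by Theorem \ref{MainThmPoin} enter only implicitly.
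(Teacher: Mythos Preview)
Your proposal is correct and follows essentially the same monotone-operator strategy as the paper. The paper works with the Kinderlehrer--Stampacchia variational-inequality framework on the closed convex set $K=\{D\vec{u}:\vec{u}-\vec{h}\in\T{H}_0^{1,p}(\Omega,W)\}\subset L^p(\Omega,W;\MndR)$ rather than translating to a homogeneous problem and applying Browder--Minty on $\T{H}_0^{1,p}(\Omega,W)$ directly, but the verifications of monotonicity, coercivity, and (hemi/weak) continuity from (i)--(iii) and the role of the matrix-weighted Sobolev/Poincar\'e inequality are the same.
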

We will follow Kinderlehrer and Stampacchia \cite{KS}.  Let $X$ be a reflexive Banach space will dual space $X^*$. If ${K}$ is a convex subset of $X$ then a mapping $\mathfrak A:{K}\ra X'$ is said to be {\it monotone} if
$$\langle \mathfrak A u-\mathfrak A v,u-v\rangle \geq 0, \qquad u,v\in { K}$$
and is {\it coercive} on { K} if there exists $\varphi\in { K}$ such that
\begin{equation}\label{coercive}\frac{\langle \mathfrak A u_j-\mathfrak A \varphi,u_j-\varphi\rangle}{\|u_j-\varphi\|}\ra \infty\end{equation}
whenever $\{u_j\}$ is a sequence in ${K}$ with $\|u_j\|\ra \infty$.  The following proposition is in Kinderlehrer and Stampacchia \cite[p. 87]{KS}.

\begin{proposition} \label{existprop}Let $K\not=\varnothing$ be a closed convex subset of a reflexive Banach space $X$ and $\mathfrak A:K\ra X^*$ be monotone, coercive, and weakly continuous on $K$.  Then there exists $u\in K$ such that
$$\langle \mathfrak Au,g- u\rangle \geq 0, \qquad \forall g\in K.$$

\end{proposition}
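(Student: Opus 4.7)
The plan is to prove this via a Galerkin-type finite dimensional reduction, using the Hartman--Stampacchia variational inequality in finite dimensions and then passing to the limit via Minty's monotonicity trick. Without loss of generality, shift by $\varphi$ so that $0 \in K$ and take $\varphi=0$ in the coercivity hypothesis \eqref{coercive}.

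First, I would choose an increasing sequence of finite dimensional subspaces $V_1 \subset V_2 \subset \cdots \subset X$ with $\bigcup_n V_n$ dense in $X$ (such a sequence exists because $X$ is separable under mild hypotheses; in the non-separable case one works with the closed separable subspace generated by $\varphi$ together with any countable dense subset of $\mathfrak A(K)$, or uses a directed set of finite dimensional subspaces and nets). Set $K_n = K \cap V_n$ and assume, by choosing $V_1$ appropriately, that each $K_n$ is a nonempty closed convex subset of $V_n$. For each $n$, the restriction of $\mathfrak A$ composed with the dual restriction $X^* \to V_n^*$ gives a continuous monotone map $\mathfrak A_n : K_n \to V_n^*$; continuity on finite dimensional $K_n$ follows from weak continuity, since on $V_n$ weak and strong topologies coincide.

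Next I would solve the finite dimensional variational inequality on $K_n$. The standard Hartman--Stampacchia result, proved via Brouwer's fixed point theorem applied to $g \mapsto P_{K_n \cap \overline{B_R}}(g - t \mathfrak A_n g)$ for suitable $t,R>0$, yields $u_n \in K_n$ with
\[
\langle \mathfrak A u_n, g - u_n\rangle \geq 0 \qquad \text{for all } g \in K_n.
\]
Taking $g = 0 \in K_n$ and invoking the coercivity hypothesis \eqref{coercive}, I would deduce that $\{\|u_n\|\}$ is uniformly bounded: otherwise \eqref{coercive} forces $\langle \mathfrak A u_n, -u_n\rangle / \|u_n\| \to +\infty$, contradicting $\langle \mathfrak A u_n, -u_n\rangle \geq 0$. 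By reflexivity of $X$ and the fact that $K$, being closed and convex, is weakly closed, a subsequence (still denoted $u_n$) satisfies $u_n \rightharpoonup u$ for some $u \in K$.

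Finally comes the limit passage, which is the main obstacle since I only have weak convergence of $u_n$ and no direct control on $\mathfrak A u_n$ beyond weak continuity. The trick is to exploit monotonicity first and only then use weak continuity to identify the limit. For any $m \leq n$ and any $g \in K_m \subset K_n$, monotonicity gives
\[
\langle \mathfrak A g, g - u_n\rangle \geq \langle \mathfrak A u_n, g - u_n\rangle \geq 0.
\]
Letting $n \to \infty$ with $g$ fixed, the left side converges to $\langle \mathfrak A g, g - u\rangle$ because $u_n \rightharpoonup u$ in $X$ and $\mathfrak A g$ is a fixed element of $X^*$. Hence $\langle \mathfrak A g, g - u\rangle \geq 0$ for every $g \in \bigcup_m K_m$, and by density (together with continuity of $g \mapsto \langle \mathfrak A g, g-u\rangle$ along line segments, which follows from weak continuity of $\mathfrak A$) the same inequality holds for every $g \in K$. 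To convert this reversed inequality into the desired one, fix $v \in K$ and for $t \in (0,1]$ apply the above with $g_t = u + t(v-u) \in K$ (using convexity of $K$):
\[
t \,\langle \mathfrak A g_t, v - u\rangle = \langle \mathfrak A g_t, g_t - u\rangle \geq 0,
\]
so $\langle \mathfrak A g_t, v - u\rangle \geq 0$. As $t \downarrow 0$, $g_t \to u$ strongly, and weak continuity of $\mathfrak A$ along this segment yields $\mathfrak A g_t \rightharpoonup \mathfrak A u$ in $X^*$; pairing against the fixed vector $v-u$ gives $\langle \mathfrak A u, v - u\rangle \geq 0$. Since $v \in K$ was arbitrary, $u$ is the desired point and the proof is complete.

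The main obstacle, as indicated, is the limit passage: a naive attempt to pass the limit in $\langle \mathfrak A u_n, g - u_n\rangle \geq 0$ fails because one cannot pair the weakly convergent $\mathfrak A u_n$ against the weakly (not strongly) convergent $u_n$. Minty's trick circumvents this by swapping the roles via monotonicity so that only weak convergence of $u_n$ against the fixed element $\mathfrak A g$ is required, and weak continuity is invoked only along a one dimensional segment where strong convergence is available.
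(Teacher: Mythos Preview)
The paper does not prove this proposition at all; it is quoted as a known result from Kinderlehrer and Stampacchia \cite[p.~87]{KS}. Your Galerkin approximation combined with Minty's monotonicity trick is precisely the classical argument found in that reference, so in that sense you have reconstructed the intended proof.

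The outline is correct; only a few technical points deserve tightening. First, in the boundedness step there is a sign slip: coercivity (with $\varphi=0$) gives $\langle \mathfrak A u_n, u_n\rangle/\|u_n\|\to +\infty$, which contradicts $\langle \mathfrak A u_n, 0-u_n\rangle\ge 0$; you wrote $-u_n$ inside the diverging quotient. Second, density of $\bigcup_n K_n$ in $K$ is not automatic from density of $\bigcup_n V_n$ in $X$; the standard fix is to take a countable dense subset $\{x_k\}\subset K$ (reducing first to the closed linear span of $K$, which is separable and still reflexive) and set $V_n=\mathrm{span}\{x_1,\dots,x_n\}$, so that $\{x_k\}\subset\bigcup_n K_n$. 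Third, the finite dimensional Hartman--Stampacchia step should be spelled out as: solve on the compact set $K_n\cap\overline{B_R}$, then use coercivity to see that for $R$ large the solution lies in the relative interior of $\overline{B_R}$, whence the inequality extends to all of $K_n$. With these clarifications the proof goes through exactly as you describe.
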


\begin{proof}[Proof of Theorem \ref{nonlinexist}] Let $X=L^p(\Omega,\MndR,W)$ be the space of functions $F:\Omega\ra \MndR$ such that
$$\|F\|_{L^p(W)}^p=\int_\Omega \|W^{1/p}(x)F(x)\|^p\,dx<\infty,$$
with dual space $X^*=L^{p'}(\Omega,\MndR, W^{-p'/p})$ under the usual pairing
$$\langle F,G\rangle =\int_\Omega \langle F(x),G(x)\rangle_{\text{tr}}\,dx.$$
Define $$U_{\vec h}=\{\vec{u}\in  {\text H}^{1,p}(\Omega,W):\vec{u}-\vec{h}\in  {\text H}^{1,p}_0(\Omega,W)\}$$ and $$K=\{D\vec u:\vec u\in U_{\V{h}}\}.$$  Then $K$ is a nonempty convex subset of $X$.  To see that $K$ is closed suppose that $D\vec v_k\ra V$ in $X$.  Since $W\in$ A$_p$, by the Sobolev inequality we have that
$$\int_\Omega|W^\frac1p(x)(\vec v_k-\vec h)|^p\,dx\lesssim \int_\Omega \|W^\frac1p(x)(D\vec{v}_k-D\vec h)\|^p\,dx\leq C.$$
Since $U_{\vec h}$ is a closed convex subset of $\text{H}^{1,p}(\Omega,W)$ there exists a subsequence $\{\vec{v}_{k_j}\}$ and function $\vec{v}\in U_{\vec h}$ such that $\vec{v}_{k_j}\ra \vec{v}$ in $ {\text H}^{1,p}(\Omega,W)$.  In particular $V=D\vec{v}\in K$ and hence $K$ is closed.

For $F,G\in X$ define
$$\langle\mathfrak A F,G\rangle=\int_\Omega \langle \mathcal A(x,F(x)),G(x)\rangle_{\text{tr}}\,dx.$$
Notice by assumption (ii) on $\mathcal A$ we have that
$$|\langle\mathfrak A F,G\rangle|\leq \|F\|_{L^p(W)}^{p-1}\|G\|_{L^p(W)}$$
so that $\mathfrak A: X\ra X^*$.  From assumption (iii) on $\A$ we have that $\mathfrak A$ is monotone.  Thus we need to check that $\mathfrak A$ is coercive, i.e. satisfies condition \eqref{coercive}.  Suppose $U_k=D\vec u_k\in K$ satisfies $\|U_k\|_{L^p(W)}\ra \infty.$   Then, given $V=D\vec v\in K$ we have $\|U_k-V\|_{L^p(W)}\ra \infty$ as well.  Fix $V=D\vec v\in K$ and use assumption (i) on $\A$ to get
\begin{align*}
\langle \mathfrak AU_k-\mathfrak AV,U_k-V\rangle&=\int_\Omega \langle \A (x,U_k)- \A (x,V), U_k-V\rangle_{\text{tr}}\,dx\\
&=\int_\Omega  \langle \A (x,U_k),U_k\rangle_{\text{tr}}\,dx+\int_\Omega \langle \A (x,V),V\rangle_{\text{tr}}\,dx\\
&\qquad -\int_\Omega  \langle \A (x,U_k),V\rangle_{\text{tr}}\,dx-\int_\Omega  \langle \A (x,V),U_k\rangle_{\text{tr}}\,dx\\
&\geq c(\|U_k\|_{L^p(W)}^p+\|V\|_{L^p(W)}^p)\\
&\qquad -C(\|U_k\|_{L^p(W)}^{p-1}\|V\|_{L^p(W)}+\|U_k\|_{L^p(W)}\|V\|^{p-1}_{L^p(W)})\\
&\geq c2^{-p}\|U_k-V\|_{L^p(W)}^p\\
&\qquad -C2^{1-p}[\|V\|_{L^p(W)}(\|V\|_{L^p(W)}^{p-1}+\|U_k-V\|_{L^p(W)}^{p-1})\\
&\quad -C(\|V\|_{L^p(W)}^{p-1}(\|V\|_{L^p(W)}+\|U_k-V\|_{L^p(W)})
\end{align*}
which implies $$
\frac{\langle \mathfrak AU_k-\mathfrak AV,U_k-V\rangle}{\|U_k-V\|_{L^p(W)}} \rightarrow \infty$$
as $\|U_k\|_{L^p(W)}\ra \infty$, showing that $\mathfrak A$ is coercive.  Finally, the weak continuity follows from the continuity of $\eta\mapsto \mathcal A(x,\eta)$.    By Proposition \ref{existprop} there exists $U=D\vec{u}\in K$ such that
$$\langle \mathfrak AU,G-U\rangle \geq 0, \qquad \forall G\in K.$$
If $\vec\varphi\in C_c(\Omega,\R^n)$, then $\vec u-\vec \varphi$ and $\vec u+\vec \varphi$ belong to $U_{\vec{h}}$ and hence
$$\int_\Omega \langle \mathcal A(x,D\vec u),D\vec\varphi\rangle_{\text{tr}}=0.$$

\end{proof}

We now consider the case when $\A$ is linear, that is,
$$\A(x,\eta)= \sum_{j = 1}^n \sum_{\beta = 1}^d  A_{ij}^{\alpha\beta}(x)\eta_{j\beta}.$$
In this case we will consider the nonhomgeneous system $$\Div \A(x,D\vec{u}(x))=-\Div(F(x)),$$ which clearly reduces to \eqref{linearsys}.

Moreover, when $p=2$ conditions (i) and (ii) on $\A$ simply become \eqref{MIEllip1} and \eqref{MIEllip2}, respectively.  Moreover, condition (iii) on $\A$ is automatically satisfied by the linearity of $\A$.  In this case we have an existence and uniqueness result, which follows from a standard use of the Lax-Milgram theorem (where here and in the rest of the paper we assume the entries of $A, \V{u}, F$ and $\V{h}$ are complex.)  .

\begin{theorem} \label{WeakExistence} Let $A$ satisfy \eqref{MIEllip1} and \eqref{MIEllip2}, $\vec{h} \in H^{1,2} (\Omega, W)$, and $ F \in L^2(\Omega, W^{-1})$. Then the system \eqref{linearsys} has a unique weak solution $\vec{u} \in H^{1, 2}(\Omega, W)$ such that $\V{u} - \V{h} \in H_0 ^{1,2} (\Omega, W)$ \end{theorem}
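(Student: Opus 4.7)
The plan is the standard Lax--Milgram machinery once the correct Hilbert-space structure and Poincar\'e estimate are in place. I assume, as is implicit in the statement, that $\Omega$ is bounded, hence contained in some cube $Q$. First I would reduce to homogeneous boundary data by setting $\vec{v} = \vec{u} - \vec{h}$, so that the problem becomes: find $\vec{v} \in \text{H}^{1,2}_0(\Omega, W)$ with
\begin{equation*}
B[\vec{v}, \vec{\varphi}] := \int_\Omega \iptr{A(x) D\vec{v}(x)}{D\vec{\varphi}(x)} \, dx = L(\vec{\varphi})
\end{equation*}
for every $\vec{\varphi} \in \text{H}^{1,2}_0(\Omega, W)$, where $L(\vec{\varphi}) := - \int_\Omega \iptr{A(x) D\vec{h}(x)}{D\vec{\varphi}(x)}\, dx - \int_\Omega \iptr{F(x)}{D\vec{\varphi}(x)} \, dx$.

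Next I would equip $\text{H}^{1,2}_0(\Omega, W)$ with the inner product
\begin{equation*}
\ip{\vec{u}}{\vec{v}}_0 := \int_\Omega \iptr{W^{\frac12}(x) D\vec{u}(x)}{W^{\frac12}(x) D\vec{v}(x)} \, dx.
\end{equation*}
Applying Theorem \ref{MainThmSob} with $p = 2$ on the cube $Q$ to a test function $\vec{\varphi} \in C_0^\infty(\Omega)$ (extended by zero to $Q$), and using H\"older's inequality on both sides to absorb the $\epsilon$-gain, yields $\|W^{\frac12}\vec{\varphi}\|_{L^2(\Omega)} \lesssim |Q|^{\frac{1}{d}} \|W^{\frac12} D\vec{\varphi}\|_{L^2(\Omega)}$; by density this matrix weighted Poincar\'e bound extends to all of $\text{H}^{1,2}_0(\Omega, W)$. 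Consequently $\|\cdot\|_0$ is equivalent to $\|\cdot\|_{\text{H}^{1,2}(\Omega, W)}$ on this subspace, making $(\text{H}^{1,2}_0(\Omega, W), \ip{\cdot}{\cdot}_0)$ a (complex) Hilbert space.

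Boundedness and coercivity of $B$ then follow pointwise from \eqref{MIEllip2} and \eqref{MIEllip1} respectively (applied with $\eta = D\vec{v}(x)$ and $\nu = D\vec{\varphi}(x)$) combined with Cauchy--Schwarz: $|B[\vec{v}, \vec{\varphi}]| \lesssim \|\vec{v}\|_0 \|\vec{\varphi}\|_0$ and $\re B[\vec{v}, \vec{v}] \gtrsim \|\vec{v}\|_0^2$. The antilinear functional $L$ is bounded on $\text{H}^{1,2}_0(\Omega, W)$: the first term is controlled via \eqref{MIEllip2} by $\|\vec{h}\|_{\text{H}^{1,2}(\Omega, W)} \|\vec{\varphi}\|_0$, while the second is bounded by $\|F\|_{L^2(\Omega, W^{-1})} \|\vec{\varphi}\|_0$ via the pointwise duality $|\iptr{F}{D\vec{\varphi}}| \leq \|W^{-\frac12} F\| \|W^{\frac12} D\vec{\varphi}\|$. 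The complex Lax--Milgram theorem then produces a unique $\vec{v} \in \text{H}^{1,2}_0(\Omega, W)$ satisfying $B[\vec{v}, \vec{\varphi}] = L(\vec{\varphi})$, so $\vec{u} := \vec{v} + \vec{h}$ is the desired weak solution, and uniqueness follows immediately from coercivity applied to the difference of two such solutions.

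The main technical point meriting care is verifying the Poincar\'e estimate on $\text{H}^{1,2}_0(\Omega, W)$: Theorem \ref{MainThmSob} is stated for $\vec{f} \in C_0^1(Q)$, and the definition of $\text{H}^{1,2}_0(\Omega, W)$ as a completion of $C_0^\infty(\Omega)$ requires that the estimate survive the limiting procedure. This is routine once one notes that smooth compactly supported functions on $\Omega$ extend by zero to $C_0^\infty(Q)$, and that both sides of the Poincar\'e inequality are continuous with respect to the $\text{H}^{1,2}(\Omega, W)$ norm. Everything else is standard Hilbert-space machinery; no ingredient beyond the matrix weighted Sobolev inequality of Section \ref{Poincare} is required.
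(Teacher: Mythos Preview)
Your proposal is correct and follows precisely the approach the paper has in mind: the paper states that the result ``follows from a standard use of the Lax--Milgram theorem'' and gives no further details, and your argument---reduce to homogeneous boundary data, use Theorem~\ref{MainThmSob} to make the gradient norm equivalent to the full $\text{H}^{1,2}_0(\Omega,W)$ norm, read off boundedness and coercivity from \eqref{MIEllip2} and \eqref{MIEllip1}, bound the functional via $W^{-\frac12}/W^{\frac12}$ duality, and apply Lax--Milgram---is exactly that standard use. Your explicit remark that $\Omega$ must sit in a cube so Theorem~\ref{MainThmSob} applies, and your care about passing the Poincar\'e estimate through the completion, are both appropriate.
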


\section{Basic regularity results}\label{regularity}

We now discuss some deeper results (that still are fairly elementary from the elliptic PDE point of view.)  The first is a degenerate Caccioppoli inequality.

\begin{lemma} Assume that $\Omega$ is some open set and $B_r$ is any ball of radius $r$ whose closure is contained in $\Omega$.  If $A = A_{ij}^{\alpha \beta}$ satisfies \eqref{MIEllip1} and \eqref{MIEllip2}, and if  $\vec{u} \in H^{1,2}(\Omega, W)$ is a solution to \eqref{linearsys} for $F \in L^2(\Omega, W^{-1})$, then \begin{align} \int_{B_{r/2}} \|W^\frac{1}{2} (x) D\vec{u} (x) \|^2 \, dx & \lesssim \frac{1}{r^2} \int_{B_r \backslash B_{r\backslash 2}} |W^\frac{1}{2} (x) \vec{u} (x)| ^2 \, dx \nonumber \\ & + \int_{B_r} \|W^{-\frac{1}{2}} (x) F(x) \|^2 \, dx \label{CaccioIneq}  \end{align} \end{lemma}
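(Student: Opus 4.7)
The plan is to run the standard Caccioppoli test-function argument, adapted to the matrix setting, with a cutoff that is supported away from the interior ball so that the bad term from the absorbed cross-products lives on the annulus $B_r \setminus B_{r/2}$. Specifically, I would fix a smooth cutoff $\eta \in C_c^\infty(B_r)$ with $\eta \equiv 1$ on $B_{r/2}$, $0 \le \eta \le 1$, and $|\nabla \eta| \lesssim 1/r$, and then test the weak formulation \eqref{LinPDE} against $\vec\varphi = \eta^2 \vec u$. Since $\vec u \in \text{H}^{1,2}(\Omega, W)$ and $\eta$ has compact support strictly inside $\Omega$, a routine density argument (approximating $\vec u$ in $\text{H}^{1,2}(\Omega,W)$ by $C^\infty$ functions and multiplying by $\eta^2$) shows $\eta^2\vec u \in \text{H}_0^{1,2}(\Omega, W)$, so it is a legitimate test function.

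Next I would expand the derivative as $D\vec\varphi = \eta^2 D\vec u + 2\eta\, \vec u \otimes \nabla\eta$ and rewrite \eqref{LinPDE} as
\begin{multline*}
\int_{B_r} \langle A(x) D\vec u, \eta^2 D\vec u\rangle_{\tr}\,dx
= -\int_{B_r} \langle A(x) D\vec u, 2\eta\, \vec u\otimes\nabla\eta\rangle_{\tr}\,dx \\
- \int_{B_r} \langle F, \eta^2 D\vec u\rangle_{\tr}\,dx - \int_{B_r} \langle F, 2\eta\, \vec u\otimes\nabla\eta\rangle_{\tr}\,dx.
\end{multline*}
The hypothesis \eqref{MIEllip1} bounds the left side below by $c\int_{B_r} \eta^2 \|W^{1/2} D\vec u\|^2$. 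For the three terms on the right I would use \eqref{MIEllip2} together with the identity $\|W^{1/2}(\vec u\otimes \nabla\eta)\| = |W^{1/2}\vec u|\,|\nabla\eta|$ (because $W^{1/2}$ acts only on the $\mathbb{C}^n$ factor), so that each of these terms is pointwise dominated by a product of $\|W^{1/2} D\vec u\|$ or $\|W^{-1/2}F\|$ with $|W^{1/2}\vec u|/r$ or itself; crucially, the two terms carrying $\nabla\eta$ are supported on $B_r \setminus B_{r/2}$.

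I would then apply Young's inequality with a small parameter $\epsilon$ separately to each term: the first (cross) term yields $\epsilon\int_{B_r}\eta^2 \|W^{1/2}D\vec u\|^2 + C\epsilon^{-1}r^{-2}\int_{B_r\setminus B_{r/2}}|W^{1/2}\vec u|^2$; the second gives $\epsilon\int_{B_r}\eta^2\|W^{1/2}D\vec u\|^2 + C\epsilon^{-1}\int_{B_r}\|W^{-1/2}F\|^2$; the third gives $\tfrac12\int_{B_r}\|W^{-1/2}F\|^2 + Cr^{-2}\int_{B_r\setminus B_{r/2}}|W^{1/2}\vec u|^2$. Choosing $\epsilon$ small enough to absorb the $\|W^{1/2}D\vec u\|^2$ terms into the coercive left side, and finally using $\eta \equiv 1$ on $B_{r/2}$, yields \eqref{CaccioIneq}.

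Apart from the $\text{H}_0^{1,2}$-admissibility of $\eta^2 \vec u$, there is no real obstacle: the whole argument is structurally identical to the classical uniformly elliptic case, and the matrix weight $W$ plays only a bookkeeping role through the ellipticity/boundedness inequalities and the factorization of $W^{1/2}(\vec u \otimes \nabla\eta)$. The only point that requires mild care is matching the supports so that the $|\nabla\eta|^2 |W^{1/2}\vec u|^2$ terms localize to the annulus $B_r \setminus B_{r/2}$ rather than all of $B_r$; this is automatic once we observe that $\nabla\eta$ vanishes on $B_{r/2}$.
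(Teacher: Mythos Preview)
Your proposal is correct and follows essentially the same route as the paper: test with $\eta^2\vec u$, expand $D(\eta^2\vec u)=\eta^2 D\vec u + 2\eta\,\vec u\otimes\nabla\eta$, use \eqref{MIEllip1} for coercivity and \eqref{MIEllip2} for the cross term, then absorb via Young's inequality with $\epsilon$. The only cosmetic difference is that the paper keeps the $F$-term as $\int \|W^{-1/2}F\|\,\|W^{1/2}D(\eta^2\vec u)\|$ and expands $\|W^{1/2}D(\eta^2\vec u)\|^2$ afterward, whereas you split it into two pieces immediately; both lead to the same absorption.
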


\begin{remark} We do not need to assume any conditions on our matrix weight $W$ other than positive definiteness a.e.  In particular, the constants in our Cacciopoli inequality do not depend on the A${}_2$ characteristic.
\end{remark}

\begin{proof} The proof is classical, and the only nontrivial thing to check is that our system and degeneracy is ``decoupled" enough. Pick some $\eta \in C_c^\infty(B_r)$ such that  $\eta \equiv 1$ in $B_{r/2}, \ 0 \leq \eta \leq 1$ in $B_r$, and $|\nabla \eta|  \leq \frac{4}{r} \chi_{B_r \backslash  B_{r/2}} $ and let $\V{\varphi} := \eta^2\V{u} \in H_0 ^{1, 2} (\Omega, W)$.  By definition, we have that \begin{equation} \sum_{\alpha, \beta, i, j} \int_{B_r} A_{ij}^{\alpha \beta} (\partial_\beta u_j ) \overline{(\partial_\alpha (u_i \eta^2) )} \, dx = - \int_{B_r} \iptr{F}{D(\eta^2 \V{u})} \, dx. \label{PDESol}\end{equation} However,  \begin{equation*} \partial_\alpha (u_i \eta^2)  = u_i (2 \eta \partial_\alpha \eta) + \eta^2 \partial_\alpha u_i  \end{equation*} so that \begin{equation} D(\eta^2 \V{u}) = 2 (\eta \V{u}) \otimes \nabla \eta + \eta^2 D\V{u} \label{ProdRule} \end{equation} so combining this with \eqref{MIEllip1}, \eqref{MIEllip2},  and \eqref{PDESol} gives \begin{align*}  \int_{B_{r}} & |\eta  |^2 \|W^\frac{1}{2}    D\V{u}  \| ^2 \, dx  \\ & \lesssim \sum_{\alpha, \beta, i, j} \int_{B_r} A_{ij}^{\alpha \beta}  (\partial_\beta u_j)  (\overline{\eta^2 \partial_\alpha u_i  }) \, dx \\ &  \leq \left|\sum_{\alpha, \beta, i, j} \int_{B_r } A_{ij}^{\alpha \beta}  \partial_\beta u_j  \overline{u_i (2 \eta \partial_\alpha \eta) } {\color{red} \, dx }\right|   + \int_{B_r} \|W^{-\frac{1}{2}} F \| \|W^\frac{1}{2} D(\eta^2 \V{u}) \| \, dx \\ & = 2\left|\sum_{\alpha, \beta, i, j} \int_{B_r }  A_{ij}^{\alpha \beta}  (\partial_\beta u_j)  (\overline{(\eta\V{u} \otimes \nabla \eta)_{i \alpha} }) {\color{red} \, dx } \right| + \int_{B_r} \|W^{-\frac{1}{2}} F \| \|W^\frac{1}{2} D(\eta^2 \V{u}) \| \, dx \\ & \lesssim  \int_{B_r }  |\eta| \|W^\frac12  D \V{u} \| \|W^\frac12  (\V{u} \otimes \nabla \eta) \| \, dx  + \int_{B_r} \|W^{-\frac{1}{2}} F \| \|W^\frac{1}{2} D(\eta^2 \V{u})\| \, dx \\ & \leq \int_{B_r }  |\eta| |\nabla \eta| \|W^\frac12 D \V{u} \| |W^\frac12  \V{u} | \, dx  + \int_{B_r} \|W^{-\frac{1}{2}} F \| \|W^\frac{1}{2} D(\eta^2 \V{u})\| \, dx.
\end{align*}

Thus, by the``Cauchy-Schwarz inequality with $\epsilon$ " we have

\begin{align*} \int_{B_{r}}  |\eta  |^2 \|W^\frac{1}{2}    D\V{u}  \| ^2 \, dx  & \lesssim \epsilon \int_{B_r }   |\eta|^2 \|W^\frac12 D \V{u} \|^2  \, dx + \frac{C(\epsilon)}{r^2} \int_{B_r \backslash B_{r/2}}   |W^\frac12  \V{u} |^2  \, dx \\ &  + \epsilon \int_{B_r}  \|W^\frac{1}{2} D(\eta^2 \V{u})\|^2 \, dx + C(\epsilon) \int_{B_r} \|W^{-\frac{1}{2}} F \| ^2 \, dx \end{align*} for some $C(\epsilon) > 0$.

  However, \eqref{ProdRule}  gives us that  \begin{equation*} \epsilon  \int_{B_r }  \|W^\frac{1}{2} D(\eta^2 \V{u})\|^2 \, dx \lesssim \frac{\epsilon}{r^2} \int_{B_r \backslash B_{r/2}} |W^\frac12 \V{u}|^2 \, dx + \epsilon \int_{B_r} | \eta|^2 \|W^\frac12 D\V{u}\|^2 \, dx \end{equation*}  so finally

\begin{align*}  \int_{B_{r}}  & |\eta  |^2 \|W^\frac{1}{2}    D\V{u}  \| ^2 \, dx \\ & \lesssim \epsilon \int_{B_r }   |\eta|^2 \|W^\frac12 D \V{u} \|^2  \, dx   + \frac{C(\epsilon)}{r^2} \int_{B_r \backslash B_{r/2}}   |W^\frac12  \V{u} |^2  \, dx + C(\epsilon) \int_{B_r} \|W^{-\frac{1}{2}} F \| ^2 \, dx.\end{align*} Setting $\epsilon > 0$ small enough and remembering that $\eta \equiv 1$ on $B_{r/2}$ finishes the proof.
\end{proof}

We now prove Theorem \ref{revmeyer} as a Corollary of our Caccioppoli inequality.

\noindent \textit{Proof of Theorem} \ref{revmeyer}.   Let $\epsilon > 0$ be chosen where Theorem \ref{MainThmPoin} is true, so by \eqref{CaccioIneq} \begin{align*} & \left( \frac{1}{|B_{r/2}| } \int_{B_{r/2}} \|W^{\frac{1}{2}}  D \V{u}  \|^2 \, dx\right)^\frac{1}{2}   \\ & \lesssim \left(\frac{1}{|B_r|} \int_{B_r} \|W^{-\frac{1}{2}}  F \|^{2} \, dx \right)^\frac{1}{2}     + \frac{1}{r} \left(\frac{1}{|B_r|} \int_{B_r} |W^{\frac{1}{2}}  (\V{u} - \V{u}_{B_r})|^2 \, dx \right)^\frac{1}{2}  \\ & \lesssim \left(\frac{1}{|B_r|} \int_{B_r} \|W^{-\frac{1}{2}}  F \|^{2} \, dx \right)^\frac{1}{2}  + \left(\frac{1}{|B_r|} \int_{B_r} \|W^\frac{1}{2}  D\V{u} \|^{2 - \epsilon} \, dx \right)^\frac{1}{2 - \epsilon} \end{align*}  However, setting \begin{equation*} U(x) = \|W^\frac{1}{2} (x)  D\V{u} (x)\|^{2 - \epsilon}, \ \ \  G(x) = \|W^{-\frac{1}{2}} (x) F (x)\|^{2 - \epsilon}, \text{ and } s = \frac{2}{2 - \epsilon}  \end{equation*}  we have that  \begin{equation*} \frac{1}{|B_{r/2}|}  \int_{B_{r/2}} (U(x))^s \, dx  \lesssim \left( \frac{1}{|B_{r}|} \int_{B_{r}} U(x) \, dx \right)^s + \frac{1}{|B_r|} \int_{B_r} (G(x))^s \, dx . \end{equation*}  A classical result of Giaquinta and Modica (see  Lemma $2.2$ in \cite{FBT}) now says that there exists $t > s = \frac{2}{2 - \epsilon}$ where \begin{align*}  & \left( \frac{1}{|B_{r/2}|} \int_{B_{r/2}} \|W^\frac{1}{2} D\V{u}  \|^{t(2 - \epsilon)} \, dx \right)^\frac{1}{t} \\ & \lesssim \left( \frac{1}{|B_{r}|} \int_{B_{r}} \|W^\frac{1}{2}  D\V{u} \|^2 \, dx \right)^\frac{2- \epsilon}{2}  + \left(\frac{1}{|B_r|} \int_{B_r} \|W^{-\frac{1}{2}}  F\|^{t (2 - \epsilon)} \, dx \right)^\frac{1}{t}. \end{align*} Setting $q = t(2 - \epsilon) > 2$ clearly completes the proof. \hfill $\square$.

We can now prove a decay of solutions type theorem, where here we do assume that $W$ is a matrix A${}_2$ weight. For simplicity we will assume $F = 0$ in our linear elliptic system. First however we need the following two lemmas, which will prove a sort of ``weak" Poincar\'{e} inequality for annuli.

\begin{lemma} \label{lem:constosc}For any $\vec{a} \in \Cn$ , a matrix A${}_p$ weight $W$, and $B = B_{r/2} \backslash B_{r/4}$ where $B_{r/2}$ and $B_{r/4}$ are concentric balls of radius $r/2$ and $r/4$, respectively, we have \begin{multline*} \left(\frac{1}{|B|} \int_B |W^\frac{1}{p}(x) ( \V{u}(x) - \V{u}_{B})| ^p \, dx \right)^\frac{1}{p} \\ \lesssim  [W]_{\text{A}_p} ^\frac{1}{p} \left(\frac{1}{|B|} \int_B |W^\frac{1}{p}(x)  (\V{u}(x) - \V{a})| ^p \, dx \right)^\frac{1}{p}. \end{multline*} \end{lemma}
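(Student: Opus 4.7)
The plan is to reduce to a triangle-inequality split and then control the constant piece $\vec{u}_B - \vec{a}$ by the matrix $A_p$ condition on the enclosing ball. Concretely, writing $\vec{u}(x) - \vec{u}_B = (\vec{u}(x) - \vec{a}) - (\vec{u}_B - \vec{a})$ and applying the triangle inequality in $L^p(B, W\,dx)$, I get
\begin{equation*}
\left(\frac{1}{|B|}\int_B |W^{1/p}(x)(\vec{u}(x) - \vec{u}_B)|^p\, dx\right)^{1/p}
\le \left(\frac{1}{|B|}\int_B |W^{1/p}(x)(\vec{u}(x) - \vec{a})|^p\, dx\right)^{1/p} + I,
\end{equation*}
where $I$ denotes the averaged $L^p$ norm of the constant vector $\vec{c} := \vec{u}_B - \vec{a}$. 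The first term is exactly the right-hand side of the claim, so the whole problem reduces to bounding $I$ by $[W]_{\text{A}_p}^{1/p}$ times the right-hand side.

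For $I$, since $\vec{c} = \frac{1}{|B|}\int_B(\vec{u}(y)-\vec{a})\,dy$, I would insert $W^{-1/p}(y)W^{1/p}(y)$ in the integrand and estimate pointwise in $x$ by
\begin{equation*}
|W^{1/p}(x)\vec{c}| \le \frac{1}{|B|}\int_B \|W^{1/p}(x) W^{-1/p}(y)\|\,|W^{1/p}(y)(\vec{u}(y)-\vec{a})|\,dy.
\end{equation*}
Applying H\"older's inequality in $y$ with exponents $p',p$ and then raising to the $p$-th power and averaging in $x$ yields
\begin{equation*}
I^p \le \left(\frac{1}{|B|}\int_B \left(\frac{1}{|B|}\int_B \|W^{1/p}(x)W^{-1/p}(y)\|^{p'}\,dy\right)^{p/p'}\,dx\right)\left(\frac{1}{|B|}\int_B |W^{1/p}(y)(\vec{u}(y)-\vec{a})|^p\,dy\right).
\end{equation*}

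The last step is to recognize the double average as an annular analogue of the $A_p$ characteristic. Since $B = B_{r/2} \setminus B_{r/4}$ and $|B| \approx |B_{r/2}|$ with a constant depending only on $d$, replacing each $B$-average by the corresponding $B_{r/2}$-average (using $B \subset B_{r/2}$) changes things only by a dimensional constant and produces precisely the ball version of $[W]_{\text{A}_p}$, which is comparable to the cube version appearing in the definition. Combining everything gives $I \lesssim [W]_{\text{A}_p}^{1/p} \bigl(\tfrac{1}{|B|}\int_B |W^{1/p}(y)(\vec{u}(y) - \vec{a})|^p\,dy\bigr)^{1/p}$, and since $[W]_{\text{A}_p} \ge 1$ we absorb the leading $1$ from the triangle inequality.

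There is no real obstacle here; the only point of care is that $B$ is an annulus rather than a ball, but because the enclosing ball has comparable measure, the averaging inequalities and the invocation of the $A_p$ condition go through unchanged up to a dimensional constant.
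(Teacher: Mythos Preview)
Your proof is correct and follows essentially the same line as the paper's: split by the triangle inequality, write the constant piece as an average, insert $W^{-1/p}(y)W^{1/p}(y)$, apply H\"older, and invoke the $A_p$ condition. You are in fact slightly more explicit than the paper about the passage from the annulus $B$ to the enclosing ball $B_{r/2}$ via $|B| \approx_d |B_{r/2}|$; the paper leaves this step implicit under the phrase ``using the $A_p$ definition.''
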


\begin{remark} As will be apparent from the proof, one can state and prove a similar result for sets $B$ that aren't necessarily annuli as above.  We will leave this for the interested reader to do this. \end{remark}

\begin{proof}
 By the triangle inequality, \begin{align*} \left(\frac{1}{|B|} \int_B |W^\frac{1}{p}(x)  (\V{u}(x) - \V{u}_{B} )| ^p \, dx \right)^\frac{1}{p} & \leq \left(\frac{1}{|B|} \int_B |W^\frac{1}{p}(x) ( \V{u}(x) -  \V{a})| ^p \, dx \right)^\frac{1}{p} \\ &  + \left(\frac{1}{|B|} \int_B |W^\frac{1}{p}(x) ( \V{a} - \V{u}_{B})| ^p \, dx \right)^\frac{1}{p} \end{align*}  However,  \begin{align*} |W^\frac{1}{p}(x) ( \V{a} -\V{u}_{B})|^p & = \left| \frac{1}{|B|} \int_B W^\frac{1}{p}(x) (\V{u}(y) - \V{a} ) \, dy \right|^p  \\ & = \left| \frac{1}{|B|} \int_B (W^\frac{1}{p}(x) W^{- \frac{1}{p}}(y)) W^\frac{1}{p}(y) (\V{u}(y) - \V{a} ) \, dy \right|^p \\ & \leq \left( \frac{1}{|B|} \int_B \|W^\frac{1}{p}(x) W^{- \frac{1}{p}}(y)\|^{p'} \, dy \right)^\frac{p}{p'}\\
 &\qquad\qquad\times \left(\frac{1}{|B|} \int_B |   W^\frac{1}{p}(y) (\V{u}(y) - \V{a} )| ^p  \, dy \right)\end{align*}
 Plugging this in and using the A${}_p$ definition immediately finishes the proof.

\end{proof}

\begin{lemma} \label{PoinAnnuli}

Let $W$ be a matrix A${}_p$ weight and assume that $\vec{u} \in H^{1, 2}(\Omega, W)$ for some open set $\Omega$.  If $B_{r} \subseteq \Omega$ and $C$ is the constant from Theorem \ref{MainThmPoin},  then \begin{equation*} \int_{B_{r/2} \backslash B_{r/4}} |W^\frac{1}{p}(x) (\vec{u} (x) - \V{u}_{B_{r/2} \backslash B_{r/4}}) |^p \, dx \lesssim C^2  \Ap{W} ^{2} r^p \int_{B_{r} \backslash B_{r/8}} {\color{red} \|} W^{\frac{1}{p}} (x) D\V{u}(x)  {\color{red} \|}^p \, dx \end{equation*} where $B_r, B_{r/2}, B_{r/4}$ and $B_{r/8}$ are concentric balls. \end{lemma}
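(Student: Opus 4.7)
The plan is to reduce the statement, via Lemma \ref{lem:constosc} and a chain-of-balls argument, to the $L^p$ version of Theorem \ref{MainThmPoin} applied on a bounded number of small balls, each contained in $B_r\setminus B_{r/8}$.

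First I would extract from Theorem \ref{MainThmPoin} its elementary $L^p\to L^p$ form on balls: for any ball $B\subset\Omega$ of radius $\rho$, Jensen's inequality on the left-hand side and H\"older's inequality on the right-hand side of Theorem \ref{MainThmPoin} (in its ball form, as in the remark following it) give
\begin{equation*}
\left(\frac{1}{|B|}\int_B |W^{1/p}(x)(\V{f}(x)-\V{f}_B)|^p\,dx\right)^{1/p}\lesssim C\,\rho\left(\frac{1}{|B|}\int_B\|W^{1/p}(x)D\V{f}(x)\|^p\,dx\right)^{1/p}.
\end{equation*}

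Next I would cover the annulus $B^*:=B_{r/2}\setminus B_{r/4}$ by a dimensional number $N=N(d)$ of balls $B_1,\ldots,B_N$ of radius $r/32$ centered on the middle sphere $\{|x|=3r/8\}$, arranged so that each $B_i\subset B_r\setminus B_{r/8}$, the $B_i$'s cover $B^*$ with bounded overlap, and after a circular reordering consecutive balls $B_k,B_{k+1}$ overlap in a set of measure $\gtrsim |B_k|$. Choosing a distinguished ball $B_1$ of this cover and applying Lemma \ref{lem:constosc} with $\V{a}=\V{u}_{B_1}$ replaces $\V{u}_{B^*}$ by $\V{u}_{B_1}$ at the cost of one factor of $\Ap{W}$, so it suffices to bound $\int_{B^*}|W^{1/p}(\V{u}-\V{u}_{B_1})|^p\,dx$. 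Splitting this into the cover and writing $\V{u}-\V{u}_{B_1}=(\V{u}-\V{u}_{B_i})+(\V{u}_{B_i}-\V{u}_{B_1})$ on each $B_i$, the first piece is immediately controlled on each $B_i$ by the $L^p$ Poincar\'e above, contributing $C^p r^p\int_{B_i}\|W^{1/p}D\V{u}\|^p$.

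For the second piece I chain $B_1$ to $B_i$ through the cover and write $\V{u}_{B_i}-\V{u}_{B_1}$ as a telescoping sum of at most $N$ consecutive differences $\V{e}_k:=\V{u}_{B_{k+1}}-\V{u}_{B_k}$. The key step is the comparison
\begin{equation*}
\int_{B_i}|W^{1/p}(x)\V{e}_k|^p\,dx\lesssim \Ap{W}\int_{B_k\cap B_{k+1}}|W^{1/p}(x)\V{e}_k|^p\,dx,
\end{equation*}
which follows from the mutual equivalence, with constant $\lesssim\Ap{W}^{1/p}$, of the $A_p$ reducing operators of balls of comparable size all contained in a common larger ball. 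On $B_k\cap B_{k+1}$, inserting and subtracting $\V{u}(x)$ expresses $\V{e}_k$ as $(\V{u}_{B_{k+1}}-\V{u}(x))+(\V{u}(x)-\V{u}_{B_k})$, and each piece is controlled by the $L^p$ Poincar\'e on $B_{k+1}$ and $B_k$ respectively, both of which are contained in $B_r\setminus B_{r/8}$. Summing over the chain (of length $\leq N$) and then over the $B_i$'s (using bounded overlap to glue the local pieces into the single integral over $B_r\setminus B_{r/8}$) produces the second factor of $\Ap{W}$ and completes the proof.

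The main obstacle I foresee is the reducing-operator comparison step: one must verify that $\|V_B V_{B'}^{-1}\|\lesssim\Ap{W}^{1/p}$ for balls $B,B'$ of comparable size contained in a common larger ball. This is a standard consequence of the matrix $A_p$ condition via the $L^{p'}$ definition of $V_B$, but it is where the second factor of $\Ap{W}$ in the stated constant enters, and keeping that factor linear (rather than inadvertently multiplying with a copy from another swap) requires careful bookkeeping through the telescoping sum.
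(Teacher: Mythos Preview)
Your proposal is essentially the same chain-of-balls argument as the paper: cover $B_{r/2}\setminus B_{r/4}$ by $N=N(d)$ overlapping balls contained in $B_r\setminus B_{r/8}$, invoke Lemma \ref{lem:constosc} to replace the annulus average by the average over a fixed ball, split into oscillation plus constant-vector terms, apply Theorem \ref{MainThmPoin} on each small ball, and telescope the constant differences along the chain using the matrix A${}_p$ condition. The only cosmetic differences are that the paper uses balls of radius $r/8$ (with centers at mutual distance $\geq r/16$) and handles the constant terms by inserting the intermediate averages $\V{u}_{\tilde B_i}$ and applying H\"older directly with the kernel $\|W^{1/p}(x)W^{-1/p}(y)\|^{p'}$, whereas you phrase the same step via reducing-operator comparison; these are equivalent. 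One small slip: balls of radius $r/32$ centered on the sphere $\{|x|=3r/8\}$ do not cover the annulus of radial width $r/4$; you need radius at least $r/8$ (as in the paper) to achieve the covering properties you list, but this does not affect the argument.
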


\begin{proof} The proof utilizes standard geometric ideas.  Let $\{x_j\}_{j = 1}^N \subseteq B_{r/2} \backslash B_{r/4} $ be a maximal set satisfying \begin{equation*} \min_{i \neq j} |x_i - x_j| \geq r/16. \end{equation*}  The  balls $\{B_{r/16} (x_j)\}_{j = 1}^N $ cover $B_{r/2} \backslash B_{r/4}$ and a trivial volume-count gives us that we can find an upper bound for $N$ depending only on $d$.  Finally, by introducing repeats if necessary, we can without loss of generality assume that \begin{equation*} B_{r/16} (x_j) \cap B_{r/16} (x_{j+1}) \neq \emptyset \text{ for all } j = 1, \ldots, N - 1 \end{equation*} so that for each $ j = 1, \ldots, N - 1$ there exists $v_j$ where \begin{equation*} B_{r/16}(v_j)  \subseteq B_{r/8} (x_j) \cap B_{r/8} (x_{j + 1}). \end{equation*}

   For notational ease, let $\tilde{B}_j = B_{r/8} (x_j) \cap B_{r/8} (x_{j + 1})$.  Clearly by the previous lemma it is enough to prove that \begin{equation*} \int_{B_{r/2} \backslash B_{r/4}} |W^\frac{1}{p}(x) (\vec{u} (x) - {\V{u}}_{B_{r/8}(x_1)}  )|^p \, dx \lesssim \Ap{W} r^p  \int_{B_{r} \backslash B_{r/8}} |W^{\frac{1}{p}} (x) D\V{u}(x)  |^p \, dx. \end{equation*}

   \noindent To that end, we have \begin{align*} \int_{B_{r/2} \backslash B_{r/4} }  & |W^{\frac{1}{p}} (x) (\vec{u} (x) - \V{u}_{B_{r/8}(x_1)} ) |^p \, dx  \\ & \leq \sum_{j = 1}^N \int_{B_{r/16} (x_j)} |W^\frac{1}{p} (x) (\vec{u} (x) - \V{u}_{B_{r/8}(x_1)}) |^p \, dx \\ & {\color{red} \lesssim} \sum_{j = 1}^N \int_{B_{r/8} (x_j)} |W^\frac{1}{p}(x) (\vec{u} (x) - \V{u}_{B_{r/8}(x_j)}) |^p \, dx  \\ & \ \ \ + \sum_{j = 1}^N \int_{B_{r/8} (x_j)} |W^\frac{1}{p}(x) (\V{u}_{B_{r/8}(x_j)} - \V{u}_{B_{r/8}(x_1)}) |^p \, dx  \\ & \lesssim C r^p \int_{B_{r} \backslash B_{r/8}} {\color{red} \|} W^{\frac{1}{p}} (x) D \V{u}(x) {\color{red}  \|}^p \, dx   \\ & \ \ \ + \sum_{j = 1}^N \int_{B_{r/8} (x_j)} |W^\frac{1}{p}(x) (\V{u}_{B_{r/8}(x_j)} - \V{u}_{B_{r/8}(x_1)}) |^p \, dx. \end{align*}

However, \begin{align*} | W^\frac{1}{p}(x) & (\V{u}_{B_{r/8}(x_j)} - \V{u}_{B_{r/8}(x_1)}) |^p  \\ & \lesssim  \sum_{i = 1} ^{j - 1} |W^\frac{1}{p}(x) (\V{u}_{B_{r/8}(x_{i + 1})} - \V{u}_{B_{r/8}(x_{i })}|^p \\ & \lesssim \sum_{i = 1} ^{j - 1} \left(|W^\frac{1}{p}(x) (\V{u}_{B_{r/8}(x_{i + 1})} - \V{u}_{\tilde{B}_i})|^p +  |W^\frac{1}{p}(x) (\V{u}_{\tilde{B}_i} - \V{u}_{B_{r/8}(x_{i })})|^p \right).\end{align*} \noindent Moreover, \begin{align*} |W^\frac{1}{p}(x) & (\V{u}_{B_{r/8}(x_{i + 1})} - \V{u}_{\tilde{B}_i})|^p  \\ & \leq  \left(\frac{1}{|\tilde{B}_i|} \int_{\tilde{B}_i} | W^\frac{1}{p} (x) (\V{u}(y) - \V{u}_{B_{r/8}(x_{i + 1})}) | \, dy \right)^p \\ & \lesssim \left(\frac{1}{|B_{r/8}(x_{i + 1})|} \int_{B_{r/8}(x_{i + 1})} \| W^\frac{1}{p} (x) W^{-\frac{1}{p}} (y)\| ^{p'} \, dy \right)^\frac{p}{p'}  \\ & \times \frac{1}{|B_{r/8}(x_{i + 1})|} \int_{B_{r/8}(x_{i + 1})} |W^\frac{1}{p}(y) ( \V{u}(y) - \V{u}_{B_{r/8}(x_{i + 1})}) |^p \, dy.  \end{align*} and a similar estimate holds for $|W^\frac{1}{p}(x)  (\V{u}_{B_{r/8}(x_{i })} - \V{u}_{\tilde{B}_i})|^p $.

Thus, by the matrix A${}_p$ property \begin{equation*} \sum_{j = 1}^N \int_{B_{r/8} (x_j)} |W^\frac{1}{p}(x) (\V{u}_{B_{r/8}(x_j)} - \V{u}_{B_{r/8}}(x_1)) |^p \, dx  \lesssim C \Ap{W}  r^p  \int_{B_{r} \backslash B_{r/8}} |W^{\frac{1}{p}} (x) D\V{u}(x)  |^p \, dx \end{equation*} which completes the proof.
\end{proof}

\begin{theorem} Assume $A = A_{ij}^{\alpha \beta}$ satisfies \eqref{MIEllip1} and  \eqref{MIEllip2} for some $W\in$ A$_2$ and that $\vec{u} \in H^{1,2}(\Omega, W)$ is a weak solution to \eqref{linearsys} for $F = 0$.  Then there exists $\gamma \approx [W]_{\text{A}_2} ^{-8}$  where \begin{equation*}   \int_{B_{r} } \|W^\frac{1}{2} (x) D\vec{u} (x) \|^2 \, dx \lesssim  \left(\frac{r}{R}\right)^\gamma \int_{B_R } \|W^\frac{1}{2} (x) D \vec{u} (x) \| ^2 \, dx. \end{equation*} for every concentric ball $B_r \subset B_R $ with the closure of $B_R$ contained in $\Omega$.    \end{theorem}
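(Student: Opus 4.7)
The plan is to follow the classical Widman hole-filling argument, pairing the Cacciopoli inequality \eqref{CaccioIneq} with the matrix weighted Poincar\'e inequality on annuli (Lemma \ref{PoinAnnuli}) and iterating the resulting decay.

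First, since \eqref{linearsys} is linear with $F = 0$ and any constant vector $\V{a} \in \Cn$ satisfies $D(\V{u} - \V{a}) = D\V{u}$, testing the weak formulation with $\eta^2(\V{u} - \V{a})$ instead of $\eta^2 \V{u}$ runs the Cacciopoli proof unchanged and gives, for concentric balls $B_{\rho/2} \subset B_\rho$ with $\overline{B_\rho} \subset \Omega$,
$$\int_{B_{\rho/2}} \|W^{\frac{1}{2}} D\V{u}\|^2 \, dx \lesssim \frac{1}{\rho^2} \int_{B_\rho \setminus B_{\rho/2}} |W^{\frac{1}{2}}(\V{u} - \V{a})|^2 \, dx.$$
I would then choose $\V{a} = \V{u}_{B_\rho \setminus B_{\rho/2}}$ and invoke Lemma \ref{PoinAnnuli} with $r = 2\rho$, so that the annulus of interest is $B_\rho \setminus B_{\rho/2}$ and the controlling annulus is $B_{2\rho} \setminus B_{\rho/4}$. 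This yields, whenever $2\rho \leq R$,
$$\int_{B_{\rho/2}} \|W^{\frac{1}{2}} D\V{u}\|^2 \, dx \lesssim C_0 \int_{B_{2\rho} \setminus B_{\rho/4}} \|W^{\frac{1}{2}} D\V{u}\|^2 \, dx,$$
where $C_0 \approx C^2 [W]_{\text{A}_2}^2 \approx [W]_{\text{A}_2}^8$ since $C \approx [W]_{\text{A}_2}^3$ by Theorem \ref{MainThmPoin} at $p = 2$ and the Cacciopoli constant is absolute.

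Second, writing $\psi(s) := \int_{B_s} \|W^{\frac{1}{2}} D\V{u}\|^2 \, dx$ and setting $\sigma = 2\rho$, the previous display becomes $\psi(\sigma/4) \leq C_0 \left( \psi(\sigma) - \psi(\sigma/8) \right)$ for all $\sigma \leq R$. The Widman trick is to add $C_0 \psi(\sigma/8)$ to both sides and then use the monotonicity $\psi(\sigma/8) \leq \psi(\sigma/4)$ to obtain $(C_0 + 1) \psi(\sigma/8) \leq C_0 \psi(\sigma)$, i.e., the one-step geometric decay
$$\psi(\sigma/8) \leq \theta \, \psi(\sigma), \qquad \theta := \frac{C_0}{C_0+1} \in (0,1),$$
valid for every $\sigma \leq R$.

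Third, I would iterate this bound to get $\psi(R/8^k) \leq \theta^k \psi(R)$ for all $k \geq 0$, and for an arbitrary $0 < r \leq R$ take $k = \lfloor \log_8(R/r) \rfloor$ to conclude $\psi(r) \leq \psi(R/8^k) \leq \theta^k \psi(R) \lesssim (r/R)^\gamma \psi(R)$, where $\gamma = -\log_8 \theta = \log_8(1 + 1/C_0) \approx 1/(C_0 \ln 8) \approx [W]_{\text{A}_2}^{-8}$, as claimed. The only real obstacle is the geometric bookkeeping in the second step: the restriction $2\rho \leq R$, needed so that the controlling annulus $B_{2\rho} \setminus B_{\rho/4}$ fits inside $B_R$, is what forces the contraction ratio $8$ rather than $2$, but this affects only the implicit constant multiplying $(r/R)^\gamma$ and not the exponent $\gamma$ itself, whose size is dictated entirely by the $[W]_{\text{A}_2}^8$ factor produced by Lemma \ref{PoinAnnuli}.
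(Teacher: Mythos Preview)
Your proposal is correct and follows essentially the same Widman hole-filling argument as the paper: combine the Cacciopoli inequality \eqref{CaccioIneq} (applied to $\V{u}$ shifted by a constant) with the annulus Poincar\'e inequality of Lemma \ref{PoinAnnuli}, absorb via the hole-filling trick to obtain a one-step decay with ratio $8$, and iterate. The only cosmetic difference is in the labeling of radii; the paper works directly with $B_{r/4} \subset B_{r/2} \subset B_r$ rather than introducing your intermediate variable $\sigma = 2\rho$, but the resulting inequality $(C_0+1)\psi(r/8) \le C_0\psi(r)$ and the identification $\gamma \approx C_0^{-1} \approx [W]_{\text{A}_2}^{-8}$ are the same.
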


\begin{proof} The proof involves a  ``Widman hole filling" argument.   Note that if $\V{u}$ is a weak solution then obviously $  \V{u} - \V{u}_{B_{r/2} \backslash B_{r/4}  }$ is also a weak solution.  Thus, by  Lemma \ref{PoinAnnuli} and \eqref{CaccioIneq}, we can pick $C > 0$ independent of $r$ and $W$ where \begin{align*}   \int_{B_{r/4}} \|W^\frac{1}{2} (x) D\vec{u} (x) \|^2 \, dx & \leq \frac{C}{r^2} \int_{B_{r/2} \backslash B_{r/4}} |W^\frac{1}{2} (x) (\vec{u} -   \V{u}_{B_{r/2} \backslash B_{r/4}  })| ^2 \, dx \\ & \leq C [W]_{\text{A}_2} ^{8} \int_{B_r \backslash B_{r/ 8}} \|W^\frac{1}{2} (x) D\V{u} (x)\| ^2 \, dx \end{align*} which means

\begin{equation*} (C [W]_{\text{A}_2} ^{8} + 1) \int_{B_{r/8}} \|W^\frac{1}{2} (x) D\vec{u} (x) \|^2 \, dx \leq C [W]_{\text{A}_2} ^{8}  \int_{B_r  } \|W^\frac{1}{2} (x) D\V{u} (x)| ^2 \, dx \end{equation*} or \begin{equation*}  \int_{B_{r/8}} \|W^\frac{1}{2} (x) D\vec{u} (x) \|^2 \, dx \leq \delta   \int_{B_r } \|W^\frac{1}{2} (x) D\V{u} (x)\| ^2 \, dx \end{equation*} where $\delta = \frac{C [W]_{\text{A}_2} ^{8}}{C [W]_{\text{A}_2} ^{8} + 1}$.

Finally, if $2^{- 3k  - 3} R < r \leq 2^{-3k} R$ and $\gamma = - \frac{\ln \delta}{3\ln 2} \approx [W]_{\text{A}_2} ^{-8} $ then \begin{equation*} \int_{B_r}  \|W^\frac{1}{2} (x) D\vec{u} (x) \|^2 \, dx \leq 8^\gamma \left(\frac{r}{R}\right)^{\gamma } \int_{B_R}  \|W^\frac{1}{2} (x) D\vec{u} (x) \|^2 \, dx. \end{equation*}

\end{proof}

We will now prove Theorem \ref{thm:locreg}.

\noindent \textit{Proof of Theorem \ref{thm:locreg}}.  The proof is a modification of some ideas in \cite{Lw}. Let $B = B(x_0, R)$ and for $x, y \in B$ let $B_x ^k$ and $B_y^k $ be balls of radius $2^{-k + 1} |x - y|$ and centered at $x$ and $y$ respectively, so that in particular $B_y ^0 \subseteq B(x, 6R) \subseteq {\color{red} B(x_0, 7R) \subseteq \Omega}$. For notational ease let $B_x = B_x ^0$ and similarly for $B_y$.  Note that for $\epsilon \approx [W]_{\text{A}_2} ^{-8}$ {\color{red} and $\gamma = 6 \epsilon$}  we  have by our Poincare inequality for $d = 2$ and decay of solutions lemma that
\begin{align*} \lefteqn{ \frac{1}{|B_x  ^k|^{1 +\epsilon}} \int_{B_x ^k} |\V{u} (x) - \V{u}_{B_x ^k} | \, dx }\\ & \leq \left(\frac{1}{|{B_x ^k}|^{1 - \epsilon}} \int_{B_x ^k} \|W ^{-\frac12} (x)\|^2 \, dx \right)^\frac12  \left(\frac{1}{|{B_x ^k}|^{1 + 3 \epsilon}} \int_{B_x ^k} |W^\frac12 (x) ( \V{u}(x) - \V{u}_{B_x ^k} )|^2 \, dx \right)^\frac12  \\ & \lesssim R^{-3\epsilon} \left(\frac{1}{|{B_x ^k}|^{1 - \epsilon}}  \int_{B_x ^k}  \|W^{-\frac12} (x)\|^2 \, dx \right)^\frac12  \end{align*} (since $B(x, R) \subseteq \Omega$) and obviously the same estimate holds for $B_y ^k$.

  Since $\V{u}$ is locally integrable, let \begin{equation*} \V{\mathcal{U}} (x) = \lim_{k \rightarrow \infty} \frac{1}{| B_x ^k| } \int_{B_x ^k} \V{u}(s) \, ds. \end{equation*}  Then note that by the Lebesgue differentiation theorem we have $\V{\mathcal{U}}$ coincides with $\V{u}$ a.e. and

\begin{align*} |\V{\mathcal{U}} (x) - \V{u}_{B_x}| & \leq \sum_{k = 0}^\infty |\V{u}_{B_x ^{k + 1}} - \V{u}_{B_x ^k}| \\ & \lesssim  \sum_{k = 0}^\infty \frac{1}{|B_x ^{k }|} \int_{B_{x} ^{k }} |\V{u} (s) - \V{u}_{B_x ^{k }} | \, ds \\ & \lesssim R^{-3\epsilon} C_{x, y} \sum_{k = 0}^\infty |B_x ^{k }|^\epsilon \\ & \lesssim R^{-3\epsilon} C_{x, y} |x - y|^\epsilon  \end{align*}  The estimate for $|\V{\mathcal{U}} (y) - \V{u}_{B_y}|$ is similar and finally
\begin{align*} |\V{u}_{B_y} - \V{u}_{B_x}| & = \frac{1}{|B_x ^{1}|} \int_{B_x ^1}  |\V{u}_{B_y} - \V{u}_{B_x}| \, ds
\\ & \leq \frac{1}{|B_x ^{1}|} \int_{B_x ^1}  |\V{u}_{B_y} - \V{u} (s)| \, ds  + \frac{1}{|B_x ^{1}|} \int_{B_x ^1}  |\V{u} (s) - \V{u}_{B_x}| \, ds
\\ & \lesssim \frac{1}{|B_y|} \int_{B_y}  |\V{u}_{B_y} - \V{u} (s)| \, ds  + \frac{1}{|B_x |} \int_{B_x }  |\V{u} (s) - \V{u}_{B_x}| \, ds
\\ & \lesssim R^{-3\epsilon} C_{x, y} |x - y|^\epsilon  \end{align*} since $B_x ^1 \subseteq B_y$.

 \hfill $\square$.

To prove Theorem \ref{FBTThm} we will first need to prove the following Caccioppoli inequality, which is a matrically degenerate version of the Caccioppoli inequality proved in the very recent paper \cite{FBT} for  uniformly elliptic $p$-Laplacian systems. Note that again for this Caccioppoli inequality we do not require that $W$ is a matrix A${}_p$ weight.

  \begin{lemma} \label{FBTLem} Let $p > 2$, and let $W$ and $G$ satisfy conditions $(i')$ and $(ii')$ of Theorem \ref{FBTThm}.  If $\V{u} \in H ^{1, p} (\Omega, W)$ is a weak solution to \eqref{PLap} where $F \in L^{p'}(\Omega, W^{-\frac{p'}{p}})$,   then for any ball $B_r$ whose closure is contained in $\Omega$ we have \begin{align} \int_{B_{r/2}} \|W^{\frac{1}{p}} (x) D \V{u} (x) \|^p \, dx & \lesssim \int_{B_r} \|W^{-\frac{1}{p}} (x) F (x) \|^{p'} \, dx \nonumber \\ & + \frac{1}{r^p} \int_{B_r \backslash B_{r/2} } |W^\frac{1}{p} (x) \V{u} (x) |^p \, dx.  \label{NLCaccio} \end{align} \end{lemma}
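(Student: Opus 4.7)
The plan is to follow the standard Caccioppoli strategy as in the uniformly elliptic case, but tracking how the nonlinear structure interacts with the weighted ellipticity. Fix a cutoff $\eta \in C_c^\infty(B_r)$ with $\eta \equiv 1$ on $B_{r/2}$, $0 \leq \eta \leq 1$, and $|\nabla\eta| \lesssim 1/r$ supported in $B_r \setminus B_{r/2}$. I would test \eqref{pLapEqn} against $\V{\varphi} = \eta^p \V{u}$, which lies in $H_0^{1,p}(\Omega,W)$ since $\V{u} \in H^{1,p}(\Omega,W)$ and the cutoff is smooth compactly supported. The first reduction is a product-rule identity analogous to \eqref{ProdRule}:
\begin{equation*}
D\V{\varphi} = \eta^p D\V{u} + p \eta^{p-1} (\V{u} \otimes \nabla \eta).
\end{equation*}

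Before inserting this into the weak formulation, I would extract two consequences of hypotheses (i) and (ii'). Writing $I := \iptr{D\V{u}\,G}{D\V{u}}$, specializing (i) and (ii') to the nonlinear operator $\A(x,\eta) = \iptr{\eta G}{\eta}^{(p-2)/2} \eta G$ forces $I \approx \|W^{1/p} D\V{u}\|^2$, so $I^{(p-2)/2} \approx \|W^{1/p}D\V{u}\|^{p-2}$, and moreover the bilinear form satisfies the Cauchy-Schwarz-type bound $|\iptr{\eta G}{\nu}| \lesssim \|W^{1/p}\eta\|\,\|W^{1/p}\nu\|$. Consequently,
\begin{equation*}
\iptr{\A(D\V{u})}{D\V{u}} = I^{p/2} \gtrsim \|W^{1/p} D\V{u}\|^p
\end{equation*}
and by (ii'),
\begin{equation*}
\bigl|\iptr{\A(D\V{u})}{\V{u} \otimes \nabla \eta}\bigr| \lesssim \|W^{1/p}D\V{u}\|^{p-1}\,|W^{1/p}\V{u}|\,|\nabla\eta|.
\end{equation*}

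Inserting the test function, the weak formulation becomes
\begin{equation*}
\int \eta^p \iptr{\A(D\V{u})}{D\V{u}} + p \int \eta^{p-1} \iptr{\A(D\V{u})}{\V{u} \otimes \nabla\eta} = -\int \iptr{F}{D\V{\varphi}}.
\end{equation*}
The first integrand is bounded below by $\eta^p \|W^{1/p} D\V{u}\|^p$. For the second, I would apply the bound above and then Young's inequality with exponents $p'$ and $p$ to the product $\bigl(\eta^{p-1}\|W^{1/p} D\V{u}\|^{p-1}\bigr) \cdot \bigl(|W^{1/p}\V{u}|\,|\nabla\eta|\bigr)$, obtaining
\begin{equation*}
\epsilon \int \eta^p \|W^{1/p} D\V{u}\|^p + C(\epsilon) \int |W^{1/p}\V{u}|^p\,|\nabla\eta|^p.
\end{equation*}
The right-hand side splits via the product rule into $\int \eta^p \iptr{F}{D\V{u}}$ and $p\int \eta^{p-1}\iptr{F}{\V{u}\otimes\nabla\eta}$; each factor pairs $\|W^{-1/p}F\|$ against either $\|W^{1/p}D\V{u}\|$ or $|W^{1/p}\V{u}|\,|\nabla\eta|$, and another application of Young's inequality (again with $p', p$) produces a further $\epsilon\int \eta^p\|W^{1/p}D\V{u}\|^p$ together with error terms controlled by $\int_{B_r}\|W^{-1/p}F\|^{p'}$ and $\frac{1}{r^p}\int_{B_r\setminus B_{r/2}}|W^{1/p}\V{u}|^p$.

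Choosing $\epsilon$ small enough to absorb the $\eta^p\|W^{1/p}D\V{u}\|^p$ contributions into the lower bound on the left-hand side, and then using $\eta \equiv 1$ on $B_{r/2}$ and $|\nabla\eta| \lesssim 1/r \cdot \chi_{B_r\setminus B_{r/2}}$, gives \eqref{NLCaccio}. The only delicate point, and the place I would be most careful, is the Young's inequality step used to handle the lower-order term: because the nonlinearity is of degree $p-1$ rather than $1$, the natural Young exponents are $(p',p)$ rather than $(2,2)$, and this is what makes the cutoff power $\eta^p$ (as opposed to $\eta^2$ in the $p=2$ case of \eqref{CaccioIneq}) the correct choice and what generates the $|\nabla\eta|^p$ factor — and hence the $1/r^p$ — on the right-hand side. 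No matrix $A_p$ hypothesis is needed anywhere in this argument, matching the remark following the $p=2$ lemma.
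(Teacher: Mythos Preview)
Your argument is correct and in fact more direct than the paper's. The key observation you make explicit — that hypotheses (i) and (ii') force $I=\iptr{D\V{u}\,G}{D\V{u}}\approx\|W^{1/p}D\V{u}\|^2$, hence $I^{(p-2)/2}\approx\|W^{1/p}D\V{u}\|^{p-2}$ and $|\iptr{\A(D\V{u})}{\nu}|\lesssim\|W^{1/p}D\V{u}\|^{p-1}\|W^{1/p}\nu\|$ — lets you run the standard $p$-Laplacian Caccioppoli argument: the single product rule $D(\eta^p\V{u})=\eta^pD\V{u}+p\eta^{p-1}(\V{u}\otimes\nabla\eta)$, one application of Young's inequality with exponents $(p,p')$ to the cross term $(\eta\|W^{1/p}D\V{u}\|)^{p-1}\cdot|\nabla\eta||W^{1/p}\V{u}|$, and absorption.

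The paper instead follows \cite{FBT}: it rewrites $D(\eta^p\V{u})$ in terms of $D(\eta\V{u})$, introduces the auxiliary quantities $\A(x,\V{u},\eta)$, $\B(x,\V{u},\eta)$, and $\N(\V{u},\eta)=\int\iptr{D(\eta\V{u})G}{D(\eta\V{u})}^{p/2}$, and then decomposes $\N(\V{u},\eta)$ into seven integrals $I_1,\ldots,I_7$, each estimated separately by H\"older and Young. Both routes end at the same inequality; yours avoids the algebraic expansion by exploiting the upper bound $I\lesssim\|W^{1/p}D\V{u}\|^2$ (from (ii') with $\nu=\eta$) at the outset, which is what makes the direct decomposition tractable. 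The paper's approach never isolates $\eta^p\|W^{1/p}D\V{u}\|^p$ directly, working instead with $\|W^{1/p}D(\eta\V{u})\|^p$ throughout — this is closer to the presentation in \cite{FBT} but costs more bookkeeping.
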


  \begin{proof}  The proof is similar to the arguments in \cite{FBT}, p. 57 - 62.  As in the proof of \eqref{CaccioIneq}, pick some $\eta \in C_c^\infty(B_r)$ where $\eta \equiv 1$ on $B_{\frac{r}{2}}, 0 \leq \eta \leq 1$ on $B_r$, and \begin{equation*} |\nabla \eta| \leq \frac{4}{r} \chi_{B_r \backslash B_{\frac{r}{2}}} \end{equation*}

Since  $\V{u} \in {\text H}^{1, p} (\Omega, W)$ is a weak solution to \eqref{PLap}  we have that \begin{equation*} \int_\Omega \left[ \ip{D\V{u} G}{D\V{u}}_{\tr}\right]^{\frac{p - 2}{2}}  \iptr{D\V{u} G}{ D(\eta^p \V{u})} \, dx = - \int_{\Omega} \iptr{F}{D(\eta^p \V{u})} \, dx. \end{equation*}

Using the equality \begin{equation*} D(\eta ^p \V{u}) =  (p - 1) \eta ^{p-2} ({(\color{red} \eta \V{u})} \otimes \nabla \eta) + \eta^{p - 1} D(\eta \V{u}) \end{equation*}  it follows that

\begin{align*}
      \iptr{ D\V{u} G}{D(\eta^p \V{u})} & =  \eta^{p -2} [ \iptr{D(\eta \V{u}) G}{D(\eta \V{u})} - \iptr{(\V{u} \otimes \nabla \eta) G}{D(\eta \V{u})}{\color{red}\cancel{]}}  \\ &  \ \ \ +  {\color{red}\cancel{[}}(p - 1) \iptr{D(\eta \V{u}) G}{\V{u} \otimes \nabla \eta} - (p - 1) \iptr{(\V{u} \otimes \nabla \eta)G }{\V{u} \otimes \nabla \eta} ] \\ & := \eta^{p - 2} \A(x, \V{u}, \eta).
    \end{align*}

\noindent  Similarly
\begin{align*} \left[\iptr{D\V{u} G}{D\V{u}} \eta^2\right]^\frac{p-2}{2}  & =  \left[\iptr{D(\eta \V{u}) G}{D(\eta \V{u})} - \iptr{D(\eta \V{u} ) G}{ \V{u} \otimes \nabla \eta}\right. \\ & \ \ \ - \left.\iptr{(\V{u} \otimes \nabla \eta)G}{D(\eta \V{u})} + \iptr{(\V{u} \otimes \nabla \eta) G}{\V{u} \otimes \nabla \eta } \right] ^{\frac{p-2}{2}}  \\ & := [\B(x, \V{u}, \eta)]^{\frac{p-2}{2}} \end{align*}

\noindent so that \begin{equation*} \int_{\Omega}  \A(x, \V{u}, \eta) [\B(x, \V{u}, \eta)]^{\frac{p-2}{2}} \, dx = {\color{red} - \int_{\Omega} \iptr{F}{D(\eta^p \V{u})} \, dx}. \end{equation*}

\noindent Furthermore, define \begin{equation*} \N(\V{u}, \eta) := \int_\Omega \left[\iptr{D(\eta \V{u}) G}{D(\eta \V{u})}\right]^{\frac{p - 2}{2}} \iptr{D(\eta \V{u}) G}{D(\eta \V{u})} \, dx \end{equation*} so by condition {\color{red}(i')} \begin{equation} \N(\V{u}, \eta) \gtrsim \int_\Omega \|W^\frac{1}{p}(x) D(\eta \V{u}) \|^p \, dx = \int_{B_r} \|W^\frac{1}{p}(x) D(\eta \V{u}) \|^p \, dx. \label{NLCaccio1} \end{equation}   We will now obtain a suitable upper bound for $|\N(\V{u}, \eta)|$.  By the definitions of $\A(x, \V{u}, \eta)$ and $\B(x, \V{u}, \eta)$ we can {\color{red}estimate}

\begin{align*} \N(\V{u}, \eta) & {\color{red}\lesssim}  \int_{\Omega} [\B(x, \V{u}, \eta)]^{\frac{p-2}{2}}\iptr{D(\eta \V{u}) G}{D(\eta \V{u})} \, dx  \\ & \ \ \ + \int_\Omega {\color{red}\big|}\iptr{D(\eta \V{u}) G}{\V{u} \otimes \nabla \eta} + \iptr{(\V{u} \otimes \nabla \eta) G}{D(\eta \V{u})} \\ & \ \ \ - \iptr{(\V{u} \otimes \nabla \eta)G }{ \V{u} \otimes \nabla \eta} {\color{red}\big|}^\frac{p-2}{2} \iptr{D(\eta \V{u}) G}{D(\eta \V{u})}\, dx \\ & =
\int_\Omega [\B(x, \V{u}, \eta)]^\frac{p - 2}{2} \A(x, \V{u}, \eta) \, dx + \int_{\Omega} [\B(x, \V{u}, \eta)]^\frac{p-2}{2} \\ & \ \ \ \times {\color{red}\bigg[} \iptr{(\V{u} \otimes \nabla \eta) G}{D(\eta \V{u})} - (p - 1) \iptr{D(\eta {\color{red}\V{u}}) G}{\V{u} \otimes \nabla \eta} \\ & \ \ \ +  (p - 1) \iptr{(\V{u}\otimes \nabla \eta) G}{\V{u} \otimes \nabla \eta} {\color{red}\bigg]}  \, dx \\ & \ \ \ + \int_{\Omega} {\color{red}\big|}\iptr{D(\eta \V{u}){\color{red} G}}{\V{u} \otimes \nabla \eta} + \iptr{(\V{u} \otimes \nabla \eta) G}{D(\eta \V{u})}  \\ & \ \ \ - \iptr{(\V{u} \otimes \nabla \eta ) G}{\V{u} \otimes \nabla \eta} {\color{red} \big|}^{\frac{p-2}{2}} \iptr{D(\eta \V{u}) {\color{red} G}  }{D(\eta \V{u})}\, dx \end{align*}

so that \begin{equation*} {\color{red} \N(\V{u}, \eta)} \leq \sum_{j = 1}^7 I_j \end{equation*} where

 \begin{align*} &  I_1  := \left|\int_{\Omega}  \A(x, \V{u}, \eta) [\B(x, \V{u}, \eta)]^{\frac{p-2}{2}} \, dx \right| = \left|\int_{\Omega} \iptr{F}{D(\eta^p \V{u})} \, dx \right|\\ &  I_2 := \int_\Omega {\color{red} [}\B(x, \V{u}, \eta){\color{red} ]}^\frac{p-2}{2}  |\langle(\V{u} \otimes \nabla \eta )G {\color{red} , } D(\eta \V{u})\rangle_{\text{tr}}| \, dx
 \\ & I_3  :=  \int_\Omega {\color{red} [}\B(x, \V{u}, \eta){\color{red}]}^{\frac{p-2}{2}} |\iptr{D(\eta \V{u}) G}{\V{u} \otimes \nabla \eta}|\,dx
 \\ & I_4 := \int_\Omega {\color{red}[}\B(x, \V{u}, \eta){\color{red}]}^{\frac{p-2}{2}}  |\iptr{(\V{u} \otimes \nabla \eta )G}{\V{u} \otimes \nabla \eta}| \, dx
 \\ & I_5 := \int_\Omega |{\color{red}\iptr{D(\eta \V{u}) G}{\V{u} \otimes \nabla \eta}}|^\frac{p-2}{2} {\color{red} \cancel{|}}\iptr{D(\eta \V{u}) G}{D(\eta \V{u})}{\color{red}\cancel{|}} \, dx
 \\ & I_6 := \int_\Omega  |{\color{red}\iptr{(\V{u} \otimes \nabla \eta)G}{D(\eta \V{u})}}|^\frac{p-2}{2} {\color{red}\cancel{|}}\iptr{D(\eta \V{u}) G}{D(\eta \V{u})}{\color{red}\cancel{|}} \, dx
 \\ & I_7 :=  \int_\Omega  |\iptr{(\V{u} \otimes \nabla \eta ) G}{\V{u} \otimes \nabla \eta}|^\frac{p-2}{2} {\color{red}\cancel{|}}\iptr{D(\eta \V{u}) G}{D(\eta \V{u})}{\color{red}\cancel{|}} \, dx. \end{align*}  We finish the proof by bounding each of these terms.  Let $\epsilon > 0$.  First, we have

 \begin{align*} I_1 & \leq \int_{\Omega} |\iptr{F}{D(\eta^p \V{u})}| \, dx
 \\ & \leq (p - 1)\int_{\Omega} |\eta| ^{p-2} |\iptr{F}{{\color{red}(\eta \V{u})} \otimes \nabla \eta}| \, dx + \int_\Omega  |\eta|^{p - 1} |\iptr{F}{D(\eta \V{u}) }| \, dx
 \\ & \leq  \int_{\Omega}  |\nabla \eta| \|W^{-\frac{1}{p}} F\| | W^{\frac{1}{p}} \V{u}|  \, dx + \int_\Omega  \|W^{-\frac{1}{p}} F\| {\color{red}\|W^\frac{1}{p}} D(\eta \V{u}) {\color{red}\|} \, dx
 \\ & \leq  \left(\int_{B_r} \|W^{-\frac{1}{p}} F\|^{p'} \, dx \right)^\frac{1}{p'} \left(\int_{B_r}   |\nabla \eta|^p   | W^{\frac{1}{p}} \V{u}|^p   \, dx\right)^\frac{1}{p}
 \\ & \ \ \ + \left(\int_{B_r}  \|W^{-\frac{1}{p}} F\|^{p'} \, dx \right)^\frac{1}{p'} \left(\int_{B_r} {\color{red}\| W^{\frac{1}{p}}} D(\eta \V{u}) {\color{red}
\|} ^p \,  dx\right)^\frac{1}{p}
 \\ & \lesssim C(\epsilon) \int_{{B_r}} \|W^{-\frac{1}{p}} F\|^{p'} \, dx + \frac{1}{r^p} \int_{B_r \backslash B_{r/2}} |W^\frac{1}{p} \V{u}| ^p \, dx  + \epsilon \int_{B_r} \|W^\frac{1}{p} D(\eta \V{u})\|^p \, dx \end{align*} by H\"{o}lder's inequality and Young's inequality with $\epsilon$.

 Next we estimate  $| \B(x, \V{u}, \eta)|$.  Note that by (ii') we immediately get \begin{align*} | \B(x, \V{u}, \eta)|  & \leq \|W^\frac{1}{p}  D(\eta \V{u}) \|^2 + 2 \|W^\frac{1}{p} D(\eta \V{u}) \| \|W^\frac{1}{p}  (\V{u} \otimes \nabla \eta)\| + \|W^\frac{1}{p}  (\V{u} \otimes \nabla \eta)\|^2  \\ & \leq \|W^\frac{1}{p}  D(\eta \V{u}) \|^2 + 2|\nabla \eta| \|W^\frac{1}{p} D(\eta \V{u}) \| |W^\frac{1}{p}  \V{u} | + |\nabla \eta| ^2|W^\frac{1}{p}  \V{u} |^2 \\ & \lesssim \|W^\frac{1}{p}  D(\eta \V{u}) \|^2 + |\nabla \eta| ^2|W^\frac{1}{p}  \V{u} |^2 \end{align*} so \begin{equation}  {\color{red}[}\B(x, \V{u}, \eta){\color{red}]}^{\frac{p-2}{2}} \lesssim \|W^\frac{1}{p}  D(\eta \V{u}) \|^{p - 2} + |\nabla \eta| ^{p - 2} |W^\frac{1}{p}  \V{u} |^{p - 2}.  \label{BEst} \end{equation}

Thus, by (ii') and  \eqref{BEst} we have \begin{align*} I_2 & \leq \int_\Omega |\nabla \eta| ^{p - 1} |W^\frac{1}{p}  \V{u}|^{p - 1} \|W^\frac{1}{p}  D(\eta \V{u})\| \, dx + \int_\Omega |\nabla \eta| |W^\frac{1}{p} \V{u}| \|W^\frac{1}{p}  D(\eta \V{u})\|^{p-1} \, dx \\ & \leq \left(\int_\Omega |\nabla \eta|^p |W^\frac{1}{p} \V{u}|^p \, dx \right)^\frac{1}{p'} \left(\int_\Omega \|W^\frac{1}{p}  D(\eta \V{u})\|^p \, dx \right)^\frac{1}{p} \\ & \ \ \ + \left(\int_\Omega |\nabla \eta|^p |W^\frac{1}{p} \V{u}|^p \, dx \right)^\frac{1}{p} \left(\int_\Omega \|W^\frac{1}{p}  D(\eta \V{u})\|^p \, dx \right)^\frac{1}{p'} \\ & \leq\epsilon \int_{B_r} \|W^\frac{1}{p}  D(\eta \V{u})\|^p \, dx + \frac{C(\epsilon)}{r^p} \int_{{B_r \backslash B_{r/2} }} |W^\frac{1}{p}  \V{u} |^p \, dx. \end{align*} For some constant $C(\epsilon)$ depending on $\epsilon$.  By the symmetry of (ii') we have that $I_3$ satisfies the same condition.

Similarly, using H\"{o}lder's inequality with respect to $p/2 $ we have \begin{align*} I_4 &\leq \int_\Omega |\nabla \eta |^p |W^\frac{1}{p}  \V{u}|^p \, dx + \int_\Omega |\nabla \eta|^2 \|W^\frac{1}{p}  D(\eta \V{u}) \|^{p - 2} |W^\frac{1}{p}  \V{u}|^2 \, dx
 \\ & \lesssim  \frac{1}{r^p} \int_{B_r \backslash B_{r/2} } |W^\frac{1}{p}  \V{u}|^p \, dx + \left(\frac{1}{r^p} \int_{B_r \backslash B_{r/2} } |W^\frac{1}{p}  \V{u}|^p \, dx \right)^\frac{2}{p} \left(\int_{B_r} {\color{red}\|}W^\frac{1}{p}  D(\eta \V{u}) {\color{red}\|}^p \, dx \right)^\frac{p-2}{p}
 \\ & \leq \frac{C(\epsilon)}{r^p} \int_{B_r \backslash B_{r/2} } |W^\frac{1}{p}  \V{u}|^p \, dx + \epsilon \int_{B_r} {\color{red}\|}W^\frac{1}{p}  D(\eta \V{u}){\color{red}\|}^p \, dx. \end{align*}

Likewise, H\"{o}lder's inequality with respect to $2p/(p+2)$ gives \begin{align*} I_5 & \leq \int_\Omega |\nabla \eta|^{\frac{p-2}{2}} \|W^\frac{1}{p}  D(\eta \V{u})\|^\frac{p + 2}{2} |W^\frac{1}{p}  \V{u}|^\frac{p-2}{2} \ dx \\ & \leq \left(\int_\Omega |\nabla \eta |^p |W^\frac{1}{p}  \V{u} |^p \, dx \right)^{\color{red}\frac{p - 2}{2p}} \left(\int_\Omega \|W^\frac{1}{p}  D(\eta \V{u})\|^p \, dx \right)^{\color{red}\frac{p + 2}{2p}} \\ & \lesssim \epsilon \int_{B_r} \|W^\frac{1}{p} D(\eta \V{u})\|^p \, dx + \frac{C(\epsilon)}{r^p} \int_{B_r \backslash B_{r/2} } |W^\frac{1}{p}  \V{u}|^p \, dx \end{align*}  and note that $I_6$ is estimated in exactly the same way.

Finally, \begin{align*} I_7 & \leq \int_\Omega |\nabla \eta | ^{p - 2} |W^\frac{1}{p}  \V{u}|^{p - 2} \|W^\frac{1}{p}  D(\nabla \V{u})\|^2 \, dx \\ & \leq \left(\int_\Omega |\nabla \eta|^p |W^\frac{1}{p}  \V{u}|^p \, dx \right)^\frac{p-2}{p} \left(\|W^\frac{1}{p}  D(\eta \V{u})\|^p \, dx \right)^\frac{2}{p} \\ & \leq \epsilon \int_{B_r} \|W^\frac{1}{p}  D(\eta \V{u})\|^p \, dx + \frac{C(\epsilon)}{r^p} \int_{B_r \backslash B_{r/2} } |W^\frac{1}{p}  \V{u}|^p \, dx. \end{align*}

Combining everything and setting $\epsilon$ small enough we have  \begin{align*} \int_{B_{r/2}} \|W^\frac{1}{p} D({\color{red} \cancel{\eta}} \V{u}) \|^p \, dx & \leq  \int_{B_r} \|W^\frac{1}{p} D(\eta \V{u}) \|^p \, dx \\  &  \lesssim \int_{B_r} \|W^{-\frac{1}{p}} F \|^{p'} \, dx  + \frac{1}{r^p} \int_{B_r \backslash B_{r/2}} |W^\frac{1}{p} \V{u} |^p \, dx  \end{align*} \noindent {\color{red} since $\eta \equiv 1 $ on $B_{r/2}.$}

\end{proof}

We can now prove Theorem Theorem \ref{FBTThm}.

\noindent \textit{Proof of Theorem \ref{FBTThm}}: The proof is very similar to the proof of Theorem \ref{revmeyer}. Let $\epsilon > 0$ be chosen where Theorem \ref{MainThmPoin} is true, so by \eqref{NLCaccio} \begin{align*} & \left( \frac{1}{|B_{r/2}| } \int_{B_{r/2}} \|W^{\frac{1}{p}}  D \V{u}  \|^p \, dx\right)^\frac{1}{p}   \\ & \lesssim \left(\frac{1}{|B_r|} \int_{B_r} \|W^{-\frac{1}{p}}  F \|^{p'} \, dx \right)^\frac{1}{p}     + \frac{1}{r} \left(\frac{1}{|B_r|} \int_{B_r} |W^{\frac{1}{p}}  (\V{u}  - \V{u}_{B_r})|^p \, dx \right)^\frac{1}{p}  \\ & \lesssim \left(\frac{1}{|B_r|} \int_{B_r} \|W^{-\frac{1}{p}}  F \|^{p'} \, dx \right)^\frac{1}{p}  + \left(\frac{1}{|B_r|} \int_{B_r} \|W^\frac{1}{p}  D\V{u} \|^{p - \epsilon} \, dx \right)^\frac{1}{p - \epsilon}.\end{align*}  However, setting \begin{equation*} U(x) = \|W^\frac{1}{p} (x) D\V{u} (x)\|^{p - \epsilon}, \ \ \  G(x) = \|W^{-\frac{1}{p}} (x) F(x)\|^{\frac{p'(p - \epsilon)}{p}}, \text{ and } s = \frac{p}{p - \epsilon}  \end{equation*}  we have that  \begin{equation*} \frac{1}{|B_{r/2}|}  \int_{B_{r/2}} (U(x))^s \, dx  \lesssim \left( \frac{1}{|B_{r}|} \int_{B_{r}} U(x) \, dx \right)^s + \frac{1}{|B_r|} \int_{B_r} (G(x))^s \, dx . \end{equation*}  Again Lemma $2.2$ in \cite{FBT} now says that there exists $t > s = \frac{p}{p - \epsilon}$ where \begin{align*}  & \left( \frac{1}{|B_{r/2}|} \int_{B_{r/2}} \|W^\frac{1}{p} D\V{u}  \|^{t(p - \epsilon)} \, dx \right)^\frac{1}{t} \\ & \lesssim \left( \frac{1}{|B_{r}|} \int_{B_{r}} {\color{red} \|} W^\frac{1}{p}  D\V{u} {\color{red}\|}^p \, dx \right)^\frac{p- \epsilon}{p}  + \left(\frac{1}{|B_r|} \int_{B_r} \|W^{-\frac{1}{p}}  F\|^{\frac{tp' (p - \epsilon)}{p}} \, dx \right)^\frac{1}{t}. \end{align*} Setting $q = t(p - \epsilon) > p$ clearly completes the proof. \hfill $\square$

Finally, note that (thanks to \eqref{NLCaccio}) the same arguments used to prove Theorem \ref{thm:locreg} also prove the following

{\color{red}
\begin{theorem} \label{nonLinReg} Let $p \geq d$ and let $W$ and $G$ satisfy \begin{itemize} \item $(i')  \ip{\eta G(x)}{\eta}_{\tr}   \gtrsim \|W^{1/p}(x)\eta\|^p,  \qquad \eta \in \Mnd$
\item $(ii') |\ip{\eta G(x)}{\nu}_{\tr} | \lesssim  \|W^{1/p}(x)\eta\|^{p-1}\|W^{1/p} (x)\nu\|, \qquad \eta,\nu\in \Mnd$ and $F \in L^{p'}(\Omega, W^{-\frac{p'}{p}})$. \end{itemize}

    Assume $\V{u}$ is a weak solution to \begin{equation*}  \Div \left[ \ip{D\V{u} G}{D\V{u}}_{\tr} ^{\frac{p-2}{2}} D\vec{u} G \right] = 0 \end{equation*}  Suppose that $B_{7R} \subseteq \Omega$ is an open ball of radius $7R$ and $B = B_R$ is the concentric ball with radius $R$. Then there exists $\epsilon > 0$ depending on $[W]_{\text{A}_p}$ such that for $x, y \in B$, we have both of the following:    \begin{itemize}
  \item $A) \ $  \begin{equation*} |\V{u}(x) - \V{u} (y)| \lesssim C_{x, y} R^{-d\pr{2-\frac{1}{p}}\epsilon} |x-y|^{\frac{1}{d} - \frac{1}{p} + \epsilon} \end{equation*} where  \begin{equation*} C_{x, y} = \left(\sup \frac{1}{|B'|^{1 - \epsilon}} \int_{B'} \|W^{-\frac{1}{p}}(\xi)  \|^{p'}  \, d\xi\right)^\frac{1}{p'} \end{equation*} where the supremum is over balls $B' \subset \Omega$ centered either at $x$ or $y$, and having radius $\leq 2|x - y|$.

 \item $B) \ $ \begin{equation*} |\V{u}(x) - \V{u} (y)| \lesssim \tilde{C}_{x, y} R^{-d\pr{2-\frac{1}{p}}\epsilon} |x - y|^\epsilon \end{equation*} where  \begin{equation*} \tilde{C}_{x, y} = \left(\sup \frac{1}{|B'|^{1 - \epsilon - \frac{p-d}{d(p-1)}}} \int_{B'} \|W^{-\frac{1}{p}}(\xi)  \|^{p'}  \, d\xi\right)^\frac{1}{p'} \end{equation*} where again the supremum is over balls $B' \subset \Omega$ centered either at $x$ or $y$, and having radius $\leq 2|x - y|$. \end{itemize} \end{theorem} }

\noindent {\color{red} \sout{Note that as before $\epsilon$ can be taken to be a constant multiple of $\Ap{W}$ to a constant power.}}

We will end this paper with the remark that Lemma \ref{FBTLem} most likely holds for the more general elliptic systems considered in \cite{FBT} (but with a matrix A${}_p$ degeneracy. In particular, Theorem \ref{FBTThm}  and Theorem {\color{red}\ref{nonLinReg}}  most likely holds for the system  \begin{equation*} \Div \left[ \ip{G D\V{u} }{D\V{u}}_{\tr} ^{\frac{p-2}{2}} G D\vec{u}  \right] = - \Div F \end{equation*} where $G : \Omega \rightarrow \Mn$ (and where $F = 0$ for Theorem \ref{nonLinReg})  with \vspace{3mm}

(i') $ \ip{G(x) \eta }{\eta}_{\tr}   \gtrsim \|W^{1/p}(x)\eta\|^2,  \qquad \eta \in \Mnd$,

(ii') $|\ip{ G(x) \eta}{\nu}_{\tr} | \lesssim \|W^{1/p}(x)\eta\|\|W^{1/p} (x)\nu\|, \qquad \eta,\nu\in \Mnd$ and $F \in L^{p'}(\Omega, W^{-\frac{p'}{p}})$.

\begin{bibdiv}
\begin{biblist}

\bib{Ae}{article}{
    author={Aaen, A.},
    title={Singular Integral Operators on Matrix-weighted $L^p$ Spaces},
    journal={Thesis available at \href{http://projekter.aau.dk/projekter/files/17607609/speciale.pdf}{http://projekter.aau.dk/projekter/files/17607609/speciale.pdf}},
  }

\bib{A}{book}{
    author={Ambrosio, L.},
    title={Lecture Notes on Elliptic Partial Differential Equations },
    publisher={available at \href{http://cvgmt.sns.it/media/doc/paper/1280/PDEAAA.pdf}{http://cvgmt.sns.it/media/doc/paper/1280/PDEAAA.pdf}},
    }

\bib{B}{article}{
author={Bernard, C.},
title={Interpolation Theorems and Applications},
journal={Article available at \href{http://math.uchicago.edu/~may/REU2013/REUPapers/Bernard.pdf}{http://math.uchicago.edu/~may/REU2013/REUPapers/Bernard.pdf}},
}

\bib{BPW}{article}{
    author={Bickel, K.},
   author={Petermichl, S.},
   author={Wick, B.},
    title={Bounds for the Hilbert Transform with Matrix A${}_2$ Weights},
    journal={J. Funct. Anal},
    volume={270, no. 5},
    date={2016},
    pages={1719 – 1743},
    review={\MR{3452715}}
    }

\bib{BLM}{article}{
   author={Bickel, K.},
   author={Lunceford, K.},
   author={Mukhtar, N.},
   title={Characterizations of A${}_2$ matrix power weights},
  journal={J. Math. Anal. Appl.},
    volume={453, no. 2},
    date={2016},
    pages={985 - 999},
    review={\MR{3648270}},
  }

\bib{BW}{article}{
    author={Bickel, K.},
   author={Wick, B.},
    title={A study of the matrix Carleson embedding theorem with applications to sparse operators.},
      journal={J. Math. Anal. Appl.},
    volume={435, no. 1},
    date={2016},
    pages={229 – 243},
    review={\MR{3423392}}
    }

\bib{B}{article}{
    author={Bownik, M.},
    title={Inverse volume inequalities for matrix weights},
    journal={Indiana Univ. Math. J.},
    volume={50, no. 1},
    date={2001},
    pages={383 - 410},
    review={\MR{1857041}},
    }

\bib{C}{article}{
author={Campanato, S.}
title={Proprietà di hölderianità di alcune classi di funzioni},
journal={Ann. Scuola Norm. Sup. Pisa},
   volume={17},
    date={1963},
    pages={175 - 188},
    review={\MR{0156188}},
    }

\bib{CMR}{article}{
author={Cruz-Uribe SFO, D. },
author={Moen, K.},
author={Rodney, S.},
title={Matrix A${}_p$ weights, degenerate Sobolev spaces, and mappings of finite distortion,}
journal={ J. Geom. Anal.},
volume={26},
date={2016},
pages={2797 - 2830},
review={\MR{3544941}}
}

\bib{CDO}{article}{
author={Culiuc, A. },
author={Di Plinio, F.},
author={Ou, Y.},
title={Uniform sparse domination of singular integrals via dyadic shifts,}
 journal={Math. Res. Lett.},
    volume={25},
    date={2018},
    pages={21 – 42},
    review={\MR{3818613}},
    }

\bib{FBT}{article}{
    author={Fattorusso, L.},
    author={Bisci, G. M.},
    author={Tarsia, A.},
    title={A global regularity result for some degenerate elliptic systems},
    journal={Nonlinear Anal.    },
    volume={125},
    date={2015},
    pages={54 - 66},
    review={\MR{3373572 }},
    }

\bib{FKS}{article}{
    author={Fabes, E.},
    author={Kenig, C.},
    author={Serapioni, R.},
    title={The local regularity of solutions of degenerate elliptic equations.},
    journal={Comm. Partial Differential Equations},
    volume={7, no. 1},
    date={1982},
    pages={77 - 116},
    review={\MR{0643158}},
    }

\bib{G}{article}{
    author={Goldberg, M.},
    title={Matrix A${}_p$ weights via maximal functions},
    journal={Pacific J. Math.},
    volume={211, no. 2},
    date={2003},
    pages={201 - 220},
    review={\MR{2015733}},
    }

   \bib{HKM}{book}{
        author={Heinonen, J.},
        author={Kilpelainen, T.},
        author={Martio, O.},
        title={Nonlinear Potential Theory of Degenerate Elliptic Equations},
        publisher={Oxford Mathematical Monographs},
        date={1993},
        review={\MR{MR2305115 }},
        }

\bib{HU}{article}{
    author={Huang, Q.},
    title={Estimates on the generalized Morrey spaces ${L^{2, \lambda}_\phi}$ and BMO${}_\psi$ for linear elliptic systems},
    journal={Indiana Univ. Math. J. },
    volume={45, no. 2},
    date={1996},
    pages={397 - 439},
    review={\MR{1414336}},
    }

\bib{I}{article}{
author={Isralowitz, J.},
title={Matrix weighted Triebel-Lizorkin bounds: a short proof},
journal={Preprint available at \href{http://arxiv.org/abs/1507.06700}{http://arxiv.org/abs/1507.06700}},
}

\bib{IKP}{article}{
    author={Isralowitz, J.},
   author={Kwon, H. K.},
   author={Pott, S.},
      title={Matrix weighted norm inequalities for commutators and paraproducts with matrix symbols},
   journal={J. Lond. Math. Soc.},
    volume={96 (2)},
    date={2017},
    pages={243 – 270},
    review={\MR{3687948}},
  }

     \bib{IM}{book}{
        author={Iwaniec, T.},
        author={Martin, G.},
        title={Geometric Function Theory and Nonlinear Analysis},
        publisher={Oxford Mathematical Monographs},
        date={2001},
        review={\MR{MR1859913}},}


   \bib{J}{book}{
        author={J. Jost},
        title={Partial Differential Equations},
        publisher={Springer GTM 214},
        date={2007},
        review={\MR{2302683 }},
        }

\bib{KS}{book}{
    author={Kinderlehrer, D.},
    author={Stampachia, G.},
    title={An introduction to variational inequalities and their applications},
    date={1980},
    review={\MR{0567696 }},
  }

\bib{LN}{article}{
author={Long, R.},
author={Nie, F.},
title={Weighted Sobolev inequalities and eigenvalue estimates of Schro\"{o}dinger operators},
journal={Lecture Notes in Math.},
volume={1494},
date={1990},
pages={1073 - 1097},
review={\MR{1187073}},
}

\bib{LMPT}{article}{
    author={Lacey, M. T. },
    author={Moen, K.},
    author={P\'{e}rez, C.},
    author={ Torres, R. H. },
    title={Sharp weighted bounds for fractional integral operators},
    journal={J. Funct. Anal. },
    volume={259, no. 5},
    date={2010},
    pages={1073 - 1097},
    review={\MR{2652182 }},
    }

\bib{LN}{article}{
   author={Lerner, A.},
   author={Nazarov, F.},
    title={Intuitive dyadic calculus: the basics},
    journal={Preprint available at \href{http://arxiv.org/abs/1508.05639}{http://arxiv.org/abs/1508.05639}},
  }

\bib{Lw}{article}{
    author={Lotkowski, E. W. },
    author={Wheeden, R.},
    title={The equivalence of various Lipschitz conditions on the weighted mean oscillation of a function},
    journal={Proc. Amer. Math. Soc. },
    volume={61},
    number={2},
    date={1976},
    pages={323 - 328},
    review={\MR{0156188 }},
  }

\bib{M}{article}{
author={Modica, G.},
title={Quasiminima of some degenerate functionals},
journal={Ann. Mat. Pura Appl},
volume={142},
date={1986},
pages={121 - 143},
review={\MR{0839035}}}

\bib{MW1}{article}{
    author={Muckenhoupt, B.},
   author={Wheeden, R.},
    title={Weighted norm inequalities for fractional integrals},
    journal={Trans. Amer. Math. Soc.},
    volume={192},
    date={1974},
   pages={261 - 274},
    review={\MR{0340523}},
}

\bib{MW}{article}{
    author={Muckenhoupt, B.},
   author={Wheeden, R.},
    title={Weighted norm inequalities for the conjugate function and Hilbert transform.},
    journal={Trans. Amer. Math. Soc.},
    volume={176},
    date={1973},
   pages={227 - 251},
    review={\MR{0312139}},
}

\bib{NPTV}{article}{
author={Nazarov, F. },
author={Petermichl, S.},
author={Treil, S.},
author={Volberg, A.},
title={Convex body domination and weighted estimates with matrix weights,}
   journal={Adv. Math.},
    volume={318 },
    date={2017},
    pages={279 – 306},
    review={\MR{3689742}},
}


\bib{NPTV}{article}{
author={Nazarov, F. },
author={Petermichl, S.},
author={Treil, S.},
author={Volberg, A.},
title={Convex body domination and weighted estimates with matrix weights,}
journal={Adv. Math},
volume={318},
date={2017},
pages={279 - 306},
review={\MR{3689742}},
}

\bib{P}{article}{
    author={Pereyra, M. C.},
    title={Lecture notes on dyadic harmonic analysis},
    journal={Contemp. Math.},
    volume={289},
    date={2000},
    pages={1 - 60},
    review={\MR{1864538}},
  }

\bib{S2}{article}{
    author={Stredulinsky, E. W. },
    title={Higher integrability from reverse H\"{o}lder inequalities},
    journal={Indiana Univ. Math. J.},
    volume={29},
    number={3},
    date={1980},
    pages={407 - 413},
    review={\MR{0570689 }},
  }

\bib{S1}{book}{
    author={Stredulinsky, E. W. },
    title={Weighted Inequalities and Degenerate Elliptic Partial Differential Equations },
    publisher={Springer Lecture Notes in Mathematics 1074},
    date={1984},
    review={\MR{0757718 }},}

\bib{S}{article}{
    author={Stroffolini, B.},
    title={Elliptic systems of PDE with BMO-coefficients},
    journal={Potential Anal.},
    volume={15, no. 3},
    date={2001},
    pages={285 - 299},
    review={\MR{1837269}},
    }

\bib{TV}{article}{
    author={Treil, S.},
    author={Volberg, A.}
    title={Wavelets and the angle between past and future,}
    journal={J. Funct. Anal.}
    volume={143},
    date={1997},
    pages={269 – 308},
    review={\MR{1428818}},}

\bib{V}{article}{
    author={Volberg, A.},
    title={Matrix A${}_p$ weights via $S$-functions},
    journal={J. Amer. Math. Soc.},
    volume={10},
    date={1997},
    pages={445 - 466},
    review={\MR{1423034}},
  }

\end{biblist}
\end{bibdiv}

\end{document}